\documentclass[10pt]{amsart}
\usepackage{times,amsmath,amsbsy,amssymb,amscd,mathrsfs}

\usepackage{algorithm2e} 
\usepackage{multicol,multirow}
\usepackage{mathtools}
\usepackage[usenames,dvipsnames,svgnames,table]{xcolor}
\usepackage[all]{xy}
\usepackage{wrapfig}
\usepackage{tcolorbox}

\usepackage{tikz,tikz-cd}
\usepackage[utf8]{inputenc}
\usepackage{pgfplots} 
\usepackage{pgfgantt}
\usepackage{pdflscape}
\pgfplotsset{compat=newest} 
\pgfplotsset{plot coordinates/math parser=false}
\newlength\fwidth

\usepackage[numbered]{mcode}

\definecolor{myBlue}{rgb}{0.0,0.0,0.55}
%\definecolor{green}{rgb}{0.0,0.7,0.2}
\usepackage[pdftex,colorlinks=true,citecolor=myBlue,linkcolor=myBlue]{hyperref}

\usepackage[hyperpageref]{backref}

\usepackage{comment,enumerate,multicol,xspace}

  \newcounter{mnote}
  \setcounter{mnote}{0}
  
  \let\oldmarginpar\marginpar
    \renewcommand\marginpar[1]{\-\oldmarginpar[\raggedleft\footnotesize #1]%
    {\raggedright\footnotesize #1}}

%\usepackage[pdftex,dvipsnames]{xcolor}

%\usepackage{xargs} % Use more than one optional parameter in a new commands
%\usepackage[colorinlistoftodos,prependcaption,textsize=footnotesize]{todonotes}
%
%\newcounter{mycomment}
%\newcommand{\mycomment}[2][]{%
%% initials of the author (optional) + note in the margin
%\refstepcounter{mycomment}%
%{%
%\todo[linecolor=blue,backgroundcolor=blue!25,bordercolor=blue]{%
%\textbf{Comment [{\sc #1\themycomment}]:}\\#2}%
%}}
%
%\newcommandx{\change}[2][1=]
%{\todo[linecolor=OliveGreen,backgroundcolor=OliveGreen!25,bordercolor=OliveGreen,#1]{%
%{\sc Change}:\\#2}}
%
%\newcommandx{\improvement}[2][1=]
%{\todo[linecolor=Plum,backgroundcolor=Plum!25,bordercolor=Plum,#1]{%
%{\sc Improvement}:\\#2}}
%
%\newcommandx{\unsure}[2][1=]
%{\todo[linecolor=red,backgroundcolor=red!25,bordercolor=red,#1]{%
%{\sc Unsure}:\\ #2}}
%

% \newcommand{\mnote}[1]{}

%\usepackage{geometry}
%%\usepackage{graphicx,pst-eps,epstopdf}
%\geometry{letterpaper, margin=1.5in}

\newtheorem{theorem}{Theorem}[section]
\newtheorem{lemma}[theorem]{Lemma}
\newtheorem{corollary}[theorem]{Corollary}

\newtheorem{remark}[theorem]{Remark}

\newcommand{\dd}{\,{\rm d}}

\newcommand{\bs}{\boldsymbol}

\newcommand{\mc}{\mcode}

\DeclareMathOperator*{\spa}{span}

\newcommand{\curl}{{\rm curl\,}}
\renewcommand{\div}{\operatorname{div}}
\newcommand{\grad}{{\rm grad\,}}
\DeclareMathOperator*{\rot}{rot}

\newcommand{\tr}{\operatorname{tr}}

\newcommand{\skw}{\operatorname{skw}}

\newcommand{\mskw}{\operatorname{mskw}}

\newcommand{\vertiii}[1]{{\left\vert\kern-0.25ex\left\vert\kern-0.25ex\left\vert #1 
    \right\vert\kern-0.25ex\right\vert\kern-0.25ex\right\vert}}

\usepackage{stmaryrd}
\usepackage{scalefnt}
\usepackage{graphicx}
\usepackage{listings}
\newcommand{\Oplus}{\ensuremath{\vcenter{\hbox{\scalebox{1.5}{$\oplus$}}}}}

\lstset{basicstyle=\ttfamily}

\begin{document}
\title[High Order Edge and Face Elements]{Geometric Decomposition and Efficient Implementation of High Order Face and Edge Elements}
\author{Chunyu Chen}%
\address{School of Mathematics and Computational Science, Xiangtan University; National Center of  Applied Mathematics in Hunan, Hunan Key Laboratory for Computation and Simulation in Science and Engineering Xiangtan 411105, China }%
 \email{202131510114@smail.xtu.edu.cn}%
\author{Long Chen}%
 \address{Department of Mathematics, University of California at Irvine, Irvine, CA 92697, USA}%
 \email{chenlong@math.uci.edu}%
 \author{Xuehai Huang}%
 \address{School of Mathematics, Shanghai University of Finance and Economics, Shanghai 200433, China}%
 \email{huang.xuehai@sufe.edu.cn}%
\author{Huayi Wei}%
\address{School of Mathematics and Computational Science, Xiangtan University; National Center of  Applied Mathematics in Hunan, Hunan Key Laboratory for Computation and Simulation in Science and Engineering Xiangtan 411105, China }%
 \email{weihuayi@xtu.edu.cn}%

 \thanks{The first and fourth authors were supported by the National Natural Science
Foundation of China (NSFC) (Grant No. 12371410, 12261131501), and the
construction of innovative provinces in Hunan Province (Grant No. 2021GK1010).
The second author was supported by NSF DMS-2012465 and DMS-2309785. The third
author was supported by the National Natural Science Foundation of China (NSFC)
(Grant No. 12171300), and the Natural Science Foundation of Shanghai (Grant No.
21ZR1480500).}

\makeatletter
\@namedef{subjclassname@2020}{\textup{2020} Mathematics Subject Classification}
\makeatother
\subjclass[2020]{
%65N55;   %%  Multigrid methods; domain decomposition for boundary value problems involving PDEs;
%65F10;   %% Iterative numerical methods for linear systems
65N30;   %%  Finite element, Rayleigh-Ritz and Galerkin methods for boundary value problems involving PDEs;
35Q60;   %%  PDEs in connection with optics and electromagnetic theory
% 58J10;   %%  Differential complexes [See also 35Nxx]; elliptic complexes
% 65N12;   %%  Stability and convergence of numerical methods for boundary value problems involving PDEs;
% 65N22;   %%  Numerical solution of discretized equations for boundary value problems involving PDEs;
% 65N15;   %%  Error bounds for boundary value problems involving PDEs
% 15A69;   %%  Multilinear algebra, tensor calculus
% 15A72;   %%  Vector and tensor algebra, theory of invariants [See also 13A50, 14L24]
}

\begin{abstract}
This study investigates high-order face and edge elements in finite element methods, with a focus on their geometric attributes, indexing management, and practical application. The exposition begins by a geometric decomposition of Lagrange finite elements, setting the foundation for further analysis. The discussion then extends to $H(\div)$-conforming and $H(\curl)$-conforming finite element spaces, adopting variable frames across differing sub-simplices. The imposition of tangential or normal continuity is achieved through the strategic selection of corresponding bases. The paper concludes with a focus on efficient indexing management strategies for degrees of freedom, offering practical guidance to researchers and engineers. It serves as a comprehensive resource that bridges the gap between theory and practice.
\end{abstract}
\keywords{Implementation of Finite Elements, Nodal Finite Elements, 
    $H(\rm{curl})$-Conforming, $H(\rm{div})$-Conforming}

\maketitle

%\tableofcontents
%\newpage
\section{Introduction}
This paper introduces node-based basis functions for high-order finite elements, specifically focusing on Lagrange, BDM (Brezzi-Douglas-Marini)~\cite{BrezziDouglasMarini1985,Nedelec1986,BrezziDouglasDuranFortin1987}, and second-kind Nédélec elements~\cite{Nedelec1986,ArnoldFalkWinther2006}. These elements are subsets of the spaces $H^1$, $H(\div)$, and $H(\curl)$, with their shape functions being the full polynomial space $\mathbb P_k^n$, where $k$ represents the polynomial degree, and $n$ the geometric dimension. Notably, varying continuity across these elements gives rise to distinct characteristics. When $n=3$, for the lowest degree $k=1$, degrees of freedom (DoFs) of the $H(\div)$-conforming finite element are posed on faces and DoFs of the $H(\curl)$-conforming finite element are posed on edges. Therefore, conventionally an $H(\div)$-conforming finite element is referred to as a face element, and an $H(\curl)$-conforming element is an edge element.  
They are also known as the second family of face and edge element as the shape function space is the full polynomial space while the first family consists of incomplete polynomial spaces \cite{ArnoldFalkWinther2006}.

In the realm of Lagrange finite elements, nodal basis functions stand out for
their simplicity and ease of computation. In contrary, constructing basis
functions for face and edge elements is more intricate. Traditional approaches involve the Piola transformations, where basis
functions are first devised on a reference element and subsequently mapped to
the actual element using either covariant (to preserve tangential continuity, in
the case of edge elements) or contravariant (to maintain
normal continuity, for face elements) Piola transformations. Detailed explanations
of this approach can be found in
\cite{ervin2012computational,EfficientAssemblyOfHdivHcurl}, and implementation is in open-source software such as MFEM~\cite{anderson2021mfem} and Fenics~\cite{alnaes2015fenics}.

Arnold, Falk and Winther, in~\cite{ArnoldFalkWinther2009}, introduced a
geometric decomposition of polynomial differential forms.
Basis functions based on Bernstein polynomials were proposed, paving the way for subsequent advancements. In
\cite{ainsworth2015bernstein,ainsworth2018bernstein}, basis functions founded on Bernstein polynomials were explored, accompanied by fast algorithms for the matrix assembly.  Additionally, hierarchical basis functions for $H(\curl)$-conforming
finite elements were introduced in
\cite{ainsworth2003hierarchic,xin2011well,xin2011construction,xin2013construction}.

While these methods offer valuable insights, they can be quite complex.
Researchers have thus ventured into simpler approaches. In
\cite{ChristiansenHuHu2018finite}, a method multiplying scalar nodal finite
element methods by vectors was introduced, resulting in $H(\div)$ and $H(\curl)$
conforming finite elements that exhibit continuity on both vertices and edges.
%However, further investigations revealed that using this $H(\curl)$-conforming
%finite element discretization could yield spurious solutions for the harmonic
%Maxwell equations. As a solution, an approach based on macroelements was
%proposed in~\cite{HuHuZhangNodalFE}, ensuring full continuity on both edges and
%vertices. 
%Nevertheless, nodal basis functions for BDM elements and second-kind N\'ed\'elec elements in arbitrary dimension remained a challenge.

%The key to constructing basis functions for BDM and second-kind N\'ed\'elec elements lies in ensuring trace continuity on the element's boundary. For BDM elements, this continuity is normal, while for second-kind N\'ed\'elec elements, it is tangential. 
We propose a straightforward method to construct nodal bases for the second family face and edge elements. Initially, we clarify the basis of Lagrange elements by considering them dual to the degrees of freedom, which are determined by values at interpolation points. We then extend this principle to vector polynomial spaces, wherein each interpolation point establishes a frame that includes both tangential and normal (t-n) directions. We impose the continuity of either tangential or normal components by appropriate choice of the t-n decomposition. We explicitly derive the dual basis functions for these elements from the basis functions of Lagrange elements.

%\CC
%{
%A similar idea was proposed in~\cite{DeSiqueiraDevlooGomes2010,DeSiqueiraDevlooGomes2013,CastroDevlooFariasGomesEtAl2016}, 
%where Castro, Siqueira and others constructed the basis for 
%$H(\mathrm{div})$ and $H(\mathrm{curl})$ in two dimemsion and $H(\mathrm{div})$
%in three dimension using the method of
%multiplying the scalar hierarchical basis for $H^1$ by a suitable vector.
%However, no basis for $H(\mathrm{curl})$ in three dimension and there is a lack
%of detial for the implementation, In this paper, we extend their work to solve 
%these issue. 
%}

This idea has been previously explored in~\cite{DeSiqueiraDevlooGomes2010,DeSiqueiraDevlooGomes2013,CastroDevlooFariasGomesEtAl2016} for constructing a hierarchical basis of $H(\div)$ elements in two and three dimensions. Through a rotation process, it was also adapted for $H(\curl)$ elements in two dimensions, as detailed in~\cite{DeSiqueiraDevlooGomes2010,DeSiqueiraDevlooGomes2013}. However, extending their methodology to higher dimensions presented significant challenges. Our work advances this field by developing a geometric decomposition of the second family face and edge elements in arbitrary dimensions and orders. Additionally, we introduce degrees of freedom that are dual to the basis functions, aspects not covered in the aforementioned studies.

An essential aspect of practical finite element implementations is the
management of local DoFs, ensuring proper mapping to global
DoFs. This is critical for maintaining the correct continuity between elements
and correctly integrating matrices and vectors across the system. Despite its importance, there is a scarcity of literature specifically addressing DoF management for high-order finite elements. In this paper, we explore the global indexing of interpolation points in Lagrange finite elements of arbitrary degree. Our discussion comprehensively covers DoF management for Lagrange, the second family face and edge elements. Our goal is to facilitate the implementation of high-order finite element methods by simplifying their DoF management.
 
The subsequent sections of this paper are organized as follows. Section
\ref{sec:pre} elucidates foundational concepts such as simplicial lattices,
interpolation points, bubble polynomials, and triangulations. Sections
\ref{sec:lagrange}, \ref{sec:facefem}, and \ref{sec:edgefem} delve into the
construction of basis functions for Lagrange, the second family face and edge elements,
respectively. Section \ref{sec:implementation} addresses the management of
degrees of freedom, a critical aspect of effective matrix assembly. Finally,
Section \ref{sec:numerexamples} presents two numerical examples, solving the
mixed Poisson problem and Maxwell problem using the second family face and edge elements, respectively, to validate the correctness of the proposed basis function construction method.

\section{Preliminaries}\label{sec:pre}
In this section, we provide essential foundations for our study. We introduce multi-indices, simplicial lattices, and interpolation points. We explain sub-simplices and their relations. Furthermore, we introduce the dictionary ordering of simplicial lattices and the concept of bubble polynomials. 

\subsection{Simplicial lattice}
A multi-index of length $n+1$ is an array of non-negative integers: 
$$
\boldsymbol\alpha = (\alpha_0, \alpha_1, \ldots, \alpha_{n}), \quad \alpha_i \in \mathbb N, i=0,\ldots, n.
$$ 
The degree or sum of the multi-index is 
$|\boldsymbol \alpha| = \sum_{i = 0}^{n} \alpha_i$ and factorial is 
$\boldsymbol \alpha! = \prod_{i=0}^n (\alpha_i!)$. 
The set of all multi-indices of length $n+1$ and degree 
$k$ will be called a simplicial lattice and denoted by $\mathbb T^n_k$, i.e.,
$$
\mathbb T^n_k = \{ \boldsymbol \alpha \in \mathbb N^{n+1}:
|\boldsymbol \alpha| = k\}.
$$ 
The elements in $\mathbb T^n_k$ can be linearly indexed by the 
dictionary ordering $R_n$:
\begin{equation*}
R_n(\boldsymbol \alpha)=\sum_{i=1}^n{\alpha_i+\alpha_{i+1}+\cdots+\alpha_n+n-i \choose n+1-i}.
\end{equation*}
For example, for an element $\boldsymbol \alpha$ in $\mathbb T^2_k$, the index is given by the mapping:
$$
R_2(\boldsymbol \alpha) = \frac{(\alpha_1+\alpha_2)(\alpha_1+\alpha_2+1)}{2} + 
\alpha_2.
$$ 
%can compute the indexing of $\boldsymbol \alpha$ in $\mathbb T^2_k$.
%For an element $\mathrm{\boldsymbol \alpha}$ in $\mathbb T^3_k$, the 
%mapping is: 
%\begin{align*}
%R_3(\boldsymbol \alpha) = &
%\frac{(\alpha_1+\alpha_2+\alpha_3)(\alpha_1+\alpha_2+\alpha_3 +1)(\alpha_1+\alpha_2+\alpha_3+2)}{6} \\
%&+ \frac{(\alpha_2+\alpha_3)(\alpha_2+\alpha_3+1)}{2} + \alpha_3.
%\end{align*}
%Below we present the first four lattice nodes in $R_2(\boldsymbol \alpha)$ with $k=4$:
%$$\begin{matrix}
%0 : \\
%1 : \\
%2 : \\
%3 : \\
%\end{matrix}
%\begin{pmatrix}
%    4 & 0 & 0  \\
%    3 & 1 & 0  \\
%    3 & 0 & 1  \\
%    2 & 2 & 0  \\
%\end{pmatrix}.$$
Notice that $\alpha_0$ is not used in the calculation of $R_n(\bs \alpha)$. 
 
\subsection{Interpolation points}
Let $\boldsymbol x_0, \boldsymbol x_1, \ldots, \boldsymbol x_n$ be $n+1$ points in $\mathbb R^n$ and 
$$T = {\rm Convex}(\boldsymbol x_0, \boldsymbol x_1, \ldots, \boldsymbol x_n) =\left \{ \sum_{i=0}^n \lambda_i \bs x_i : 0\leq \lambda_i \leq 1, \sum_{i=0}^n\lambda_i = 1 \right \},$$ 
be an $n$-simplex, where $\bs \lambda=(\lambda_0, \ldots, \lambda_n)$ is called the barycentric coordinate. We can have a geometric embedding of the algebraic set $\mathbb T^n_k$ as follows:
%Define a set of points in $T$ 
$$
\mathcal X_{T} = \left \{\boldsymbol
x_{\boldsymbol \alpha} = \frac{1}{k}\sum_{i = 0}^n \alpha_i \boldsymbol
x_i: \boldsymbol \alpha\in \mathbb T^n_k \right \},
$$
which is called the set of interpolation points with degree $k$ on $T$; see Fig.\ref{fig:interpointk4} for $k=4$ and $n=2$. In literature~\cite{nicolaides1972class}, it is called the $k$-th order principal lattice of $n$-simplex $T$. Given $\bs \alpha\in \mathbb T^{n}_k$, the barycentric coordinate of $\bs \alpha$ is given by
$$
\bs \lambda(\bs \alpha) = (\alpha_0, \alpha_1, \ldots, \alpha_n )/k.
$$ The ordering of $\mathcal X_T$ is also given by $R_n(\bs\alpha)$.  Note
that the indexing map $R_n(\bs\alpha)$ is only a local ordering of the
interpolation points on one $n$-simplex. In Section \ref{sec:implementation}, we will discuss the global
indexing of all interpolation points on the triangulation composed of simplexes.

\begin{figure}[htp]
\centering
\includegraphics[width=0.5\textwidth]{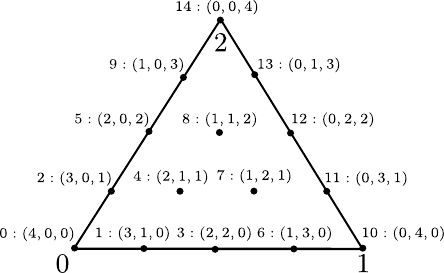}
\caption{The interpolation points of $\mathbb T^2_4$ and their
linear indices and multi-indexs. }
\label{fig:interpointk4}
\end{figure}

By this geometric embedding, we can apply operators to the geometric simplex $T$. For example, $\mathcal X_{\mathring{T}}$ denotes the set of interpolation points in the interior of $T$ and $\mathcal X_{\partial T}$ is the set of interpolation points on the boundary of $T$.

\subsection{Sub-simplexes and sub-simplicial lattices}
Following~\cite{ArnoldFalkWinther2009}, let $\Delta(T)$ denote all the subsimplices of $T$, while $\Delta_{\ell}(T)$ denotes the set of subsimplices of dimension $\ell$ for $\ell=0,\ldots, n$. 
%The cardinality of $\Delta_{\ell}(T)$ is $\displaystyle{n+1\choose \ell+1}$. 
%Elements of $\Delta_0(T) = \{{\bs x}_0, \ldots, {\bs x}_n\}$ are $n+1$ vertices of $T$ and $\Delta_n(T) = \{T\}$. 
The capital letter $F$ is reserved for an $(n-1)$-dimensional face of $T$ and $F_i\in\Delta_{n-1}(T)$ denotes the face opposite to ${\bs x}_i$ for $i=0,1,\ldots, n$.

%It is the combinatory structure of the simplex that plays an important role in the construction. 
For a sub-simplex $f\in \Delta_{\ell}(T)$, following~\cite{Chen;Huang:2022FEMcomplex3D}, we will overload the notation $f$ for both the geometric simplex and the algebraic set of indices. Namely $f = \{f(0), \ldots, f(\ell)\}\subseteq \{0, 1, \ldots, n\}$ and 
$$
f ={\rm Convex}({\bs x}_{f(0)}, \ldots, {\bs x}_{f(\ell)}) \in \Delta_{\ell}(T)
$$
is the $\ell$-dimensional simplex spanned by the vertices ${\bs x}_{f(0)}, \ldots, {\bs x}_{f( \ell)}$.
%There is a one-to-one mapping
%$$
%\Sigma(0 : \ell, 0: n)\to \Delta_{\ell}(T): \sigma \to f_{\sigma}.
%$$
%With a little bit abuse of notation, we also use $f$ for the index $\sigma$. That is for an $f\in \Delta_{\ell}(T)$, we treat $f\in \Sigma(0 : \ell, 0: n)$ so that $
%f=\left[{\bs x}_{f(0)}, \ldots, {\bs x}_{f(\ell)}\right]\in \Delta_{\ell}(T).$
%Then
%$$
%\lambda_{f}^{\alpha_f}: = \lambda_{\sigma}^{\alpha_{\sigma}} = \lambda_{\sigma(0)}^{\alpha_{\sigma(0)}} \ldots \lambda_{\sigma(\ell)}^{\alpha_{\sigma(\ell)}} = \lambda_{f(0)}^{\alpha_{f(0)}} \ldots \lambda_{f(\ell)}^{\alpha_{f(\ell)}},
%$$
%which shows direct relation to $f$ without resort to $\sigma$. When we want to emphasize the dimension of $f$, we write $f^{\ell}$ for an $f\in \Delta_{\ell}(T)$.

If $f \in \Delta_{\ell}(T)$, then $f^{*} \in \Delta_{n- \ell-1}(T)$ denotes the sub-simplex of $T$ opposite to $f$. When treating $f$ as a subset of $\{0, 1, \ldots, n\}$, $f^*\subseteq \{0,1, \ldots, n\}$ so that $f\cup f^* = \{0, 1, \ldots, n\}$, i.e., $f^*$ is the complement of set $f$. Geometrically,
$$
f^* ={\rm Convex}({\bs x}_{f^*(1)}, \ldots, {\bs x}_{f^*(n-\ell)}) \in \Delta_{n- \ell-1}(T)
$$
is the $(n- \ell-1)$-dimensional simplex spanned by vertices not contained in $f$.

Given a sub-simplex $f\in \Delta_{\ell}(T)$, through the geometric embedding $f \hookrightarrow T$, we define the prolongation/extension operator $E: \mathbb T^{\ell}_k \to \mathbb T^{n}_k$ as follows:
\begin{equation}\label{eq:Ealpha}
E(\alpha)_{f(i)} = \alpha_{i}, i=0,\ldots, \ell, \quad \text{ and } E(\alpha)_j = 0, j\not\in f.
\end{equation}
Take $f = \{ 1, 3, 4\}$ for example,
then the extension $ E(\alpha)= (0, \alpha_{0}, 0, \alpha_{1}, \alpha_{2}, \ldots, 0)$ for 
$\alpha = ( \alpha_{0},\alpha_{1},\alpha_{2})\in \mathbb T^{\ell}_k(f)$. 
With a slight abuse of notation, for a node $\alpha_f\in \mathbb T^{\ell}_k(f)$, we still use the same notation $\alpha_f\in \mathbb T^{n}_k(T)$ to denote such extension. 
The geometric embedding $\bs x_{E(\alpha)}\in f$ which justifies the notation $\mathbb T^{\ell}_k(f)$ and its geometric embedding will be denoted by $\mathcal X_f$, which consists of interpolation points on $f$. $\mathbb T^{\ell}_k(\mathring{f})$ is the set of lattice points whose geometric embedding is in the interior of $f$, i.e., $\mathcal X_{\mathring{f}}$.

%\subsection{Bubble polynomial}
The Bernstein representation of polynomial of degree $k$ on a simplex $T$ is
$$
\mathbb P_k(T) := \textrm{span} \{ \lambda^{\boldsymbol \alpha} = \lambda_{0}^{\alpha_0}\lambda_{1}^{\alpha_1}\ldots \lambda_{n}^{\alpha_n}, \alpha\in \mathbb T_k^{n}\}.
$$
%In the Bernstein form, for an $f_{\sigma}\in \Delta_{\ell}(T)$, 
%$$
%\mathbb P_k(f_{\sigma}) =\{ \lambda_{\sigma}^{\alpha} = \lambda_{\sigma(0)}^{\alpha_0}\lambda_{\sigma(1)}^{\alpha_1}\ldots \lambda_{\sigma(\ell)}^{\alpha_{\ell}},  \alpha \in \mathbb T_k^{\ell}\}.
%$$
%Through the natural extension defined by the barycentric coordinate, $\mathbb P_k(f_{\sigma})\subseteq \mathbb P_k(T)$. 
The bubble polynomial of $f$ is a polynomial of degree $\ell + 1$:
$$
b_f: = \lambda_{f} = \lambda_{f(0)}\lambda_{f(1)}\ldots \lambda_{f(\ell)}\in \mathbb P_{\ell + 1}(f).
$$

%The following result can be found in~\cite{Chen;Huang:2021Geometric}. For completeness, we include here. 
%\begin{lemma}\label{lm:bf}
%Let $f\in \Delta_{\ell}(T)$ with $\ell \geq 1$. 
%For $e\in \Delta_{m}(T)$ with $m\leq \ell$ and $e\neq f$, then $b_f|_{e} = 0$. And $b_f|_F = 0$ for face $F\in \Delta_{n-1}(T), f\not\subseteq F$. 
%\end{lemma}
%\begin{proof}
%Take any $e\in \Delta_{m}(T)$ with $m\leq \ell$ and $e\neq f$ when $m=\ell$. We claim $f\cap e^* \neq \varnothing$. Assume $f\cap e^* = \varnothing$. Then $\Delta_0(f^*)\cup \Delta_0(e) = \{0,1,\ldots, n\}$ and thus $f\subseteq e$ which contradicts with either $m< \ell$ or $e\neq f$. 
%
%As $f\cap e^* \neq \varnothing$, then $b_f$ contains $\lambda_i$ for some $i\in e^*$ and consequently $b_f|_e = 0$. 
%
%Similarly $f\not\subseteq F$ implies $b_f$ contains $\lambda_i$ for some $i\not\in F$ and consequently $b_f|_F = 0$. 
%\end{proof}
 
\subsection{Triangulation}
Let $\Omega$ be a polyhedral domain in $\mathbb R^n$, $n\geq1$. A geometric
triangulation $\mathcal T_h$ of $\Omega$ is a set of $n$-simplices such that
$$
\cup_{T\in \mathcal T_h} T = \Omega, \quad \mathring T_i \cap \mathring T_j =
\emptyset, \ \forall\ T_i, T_j \in \mathcal T_h, T_i \neq T_j.
$$
The subscript $h$ denotes the diameter of each element and can be understood as a piecewise constant function on $\mathcal T_h$. 
A  triangulation is conforming if the intersection of two simplexes are common lower dimensional sub-simplex. We shall restrict to conforming triangulations in this paper. 

The interpolation points on a conforming triangulation $\mathcal T_h$ is 
\begin{equation}\label{eq:Xunion}
\mathcal X = \bigcup_{T\in \mathcal T_h}\mathcal X_{T}.
\end{equation}
Note that a lot of duplications exist in \eqref{eq:Xunion}. A direct sum of the interpolation set is given by 
\begin{equation}\label{eq:Xsum}
\mathcal X = \Delta_{0}(\mathcal T_h)\oplus\Oplus_{\ell = 1}^n \Oplus_{f\in \Delta_{\ell}(\mathcal T_h)} \mathcal X_{\mathring{f}},
\end{equation}
where $\Delta_{\ell}(\mathcal T_h)$ denotes the set of $\ell$-dimensional subsimplices of $\mathcal T_h$.
In implementation, computation of local matrices on each simplex is based on \eqref{eq:Xunion} while to assemble a matrix equation, \eqref{eq:Xsum} should be used. As Lagrange element is globally continuous, the indexing of interpolation points on vertices, edges, faces should be unique and a mapping from the local index to the global index is needed. The index mapping from \eqref{eq:Xunion} to \eqref{eq:Xsum} will be discussed in Section \ref{sec:implementation}.

\section{Geometric Decompositions of Lagrange Elements}\label{sec:lagrange}
In this section, we present a geometric decomposition for Lagrange finite elements on $n$-dimensional simplices. We introduce the concept of Lagrange interpolation basis functions, where function values at interpolation points serve as degrees of freedom. 

\subsection{Geometric decomposition}
For the polynomial space $\mathbb P_k(T)$ with $k\geq 1$ on an $n$-dimensional simplex $T$, we have the following geometric decomposition of Lagrange element~\cite[(2.6)]{ArnoldFalkWinther2009} and a proof can be found in~\cite{Chen;Huang:2021Geometric}. The integral at a vertex is understood as the function value at that vertex and $\mathbb P_k({\bs x}) = \mathbb R$. When $k<0$, $\mathbb P_k(T) = \varnothing$.

\begin{theorem}[Geometric decomposition of Lagrange element]\label{thm:Lagrangedec}
For the polynomial space $\mathbb P_k(T)$ with $k\geq 1$ on an $n$-dimensional simplex $T$, we have the following decomposition %of shape functions
\begin{align}
\label{eq:Prdec}
\mathbb P_k(T) &= \Oplus_{\ell = 0}^n\Oplus_{f\in \Delta_{\ell}(T)} b_f\mathbb P_{k - (\ell +1)} (f).
\end{align}
The function $u\in \mathbb P_k(T)$ is uniquely determined by DoFs
\begin{equation}\label{eq:dofPr}
\int_f u \, p \dd s, \quad \quad~p\in \mathbb P_{k - (\ell +1)} (f), f\in \Delta_{\ell}(T), \ell = 0,1,\ldots, n.
\end{equation}
\end{theorem}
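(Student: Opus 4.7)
The plan is to establish the decomposition \eqref{eq:Prdec} via a dimension count combined with a direct-sum argument, and then to prove unisolvence of the DoFs \eqref{eq:dofPr} by induction on the dimension of sub-simplices.

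First, each summand $b_f\mathbb P_{k-(\ell+1)}(f)$ sits inside $\mathbb P_k(T)$ (treating barycentric coordinates on $f$ as coordinates on $T$) and has dimension $\binom{k-1}{\ell}$, with the convention that it is zero for $k \le \ell$. Summing over $f\in\Delta_\ell(T)$ and over $\ell$, the Vandermonde identity yields
\begin{equation*}
\sum_{\ell=0}^{n}\binom{n+1}{\ell+1}\binom{k-1}{\ell}=\binom{n+k}{k}=\dim\mathbb P_k(T),
\end{equation*}
which also matches the partition $|\mathcal X_T|=\sum_{\ell,f}|\mathcal X_{\mathring f}|$ of the interpolation points by their support.

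Next, I would show the sum is direct by inducting on $\dim f$. Suppose $\sum_{g\in\Delta(T)}b_g p_g=0$ with $p_g\in\mathbb P_{k-(\dim g+1)}(g)$. For any $f\in\Delta(T)$ and any $g\nsubseteq f$, there is an index $j\in g\setminus f$ for which $\lambda_j\equiv 0$ on $f$, so $b_g|_f\equiv 0$; hence on a fixed $f$ only contributions with $g\subseteq f$ survive. Evaluating at each vertex $v$ gives $p_v=0$. Assuming inductively that $p_g=0$ for every proper sub-simplex $g$ of $f$, restricting the full sum to the open face $\mathring f$ leaves only $b_f p_f$; since $b_f>0$ on $\mathring f$, we conclude $p_f\equiv 0$. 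Combined with the dimension count above, this proves \eqref{eq:Prdec}.

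For the DoFs, I would again induct on $\ell$. Suppose $u\in\mathbb P_k(T)$ has all DoFs in \eqref{eq:dofPr} equal to zero. Vertex DoFs give $u(v)=0$ at every vertex. Inductively, if $u$ vanishes on every sub-simplex of dimension $<\ell$, then on each $f\in\Delta_\ell(T)$ the restriction $u|_f$ vanishes on $\partial f$; by a standard factorization in the barycentric coordinates of $f$, $u|_f=b_f\,q$ for some $q\in\mathbb P_{k-(\ell+1)}(f)$. Testing the DoF against $p=q$ gives $\int_f b_f q^2\dd s=0$, and positivity of $b_f$ on $\mathring f$ forces $q\equiv 0$, hence $u|_f\equiv 0$. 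Iterating through $\ell=0,\ldots,n$ yields $u\equiv 0$ on $T$, and since the total number of DoFs equals $\dim\mathbb P_k(T)$, unisolvence follows. The step I expect to require the most care is the triangular structure of the decomposition, namely verifying that restricting $\sum_g b_g p_g$ to $\mathring f$ cleanly isolates the $f$-component after applying the induction hypothesis; this hinges on the simple but essential observation that $b_g$ vanishes on $f$ whenever $g\nsubseteq f$. The Vandermonde-type dimension identity and the $u|_f=b_f q$ factorization are otherwise routine.
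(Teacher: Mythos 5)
Your proof is correct. Note that the paper itself does not prove this theorem in-text: it cites Arnold--Falk--Winther (2009) and Chen--Huang (2021) for the proof, and your argument (dimension count via the Vandermonde identity, directness by induction on the sub-simplex dimension using $b_g|_f\equiv 0$ for $g\nsubseteq f$, and DoF unisolvence by the trace-induction with the factorization $u|_f=b_f q$ and the positivity of $b_f$ on $\mathring f$) is essentially the standard argument given in those references, so nothing further is needed.
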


%A proof of the unisolvence can be found in~\cite{Chen;Huang:2021Geometric}.
%\begin{theorem}\label{th:Lagrange}
%\begin{align}
%\label{eq:Prdec}
%\mathbb P_k(T) &= \Oplus_{\ell = 0}^n\Oplus_{f\in \Delta_{\ell}(T)} b_f\mathbb P_{k - (\ell +1)} (f).
%\end{align}
%The function $u\in \mathbb P_k(T)$ is uniquely determined by DoFs
%\begin{equation}\label{eq:dofPr}
%\int_f u \, p \dd s \quad \forall~p\in \mathbb P_{k - (\ell +1)} (f), f\in \Delta_{\ell}(T), \ell = 0,1,\ldots, n.
%\end{equation}
%\end{theorem}

Introduce the bubble polynomial space of degree $k$ on a sub-simplex $f$ as
$$
\mathbb B_{k}( f) := b_f\mathbb P_{k - (\ell +1)} (f), \quad f\in \Delta_{\ell}(T), 1\leq \ell \leq n.
$$
When $\ell \geq k$, $\mathbb B_k(f) = \varnothing$.
It is called a bubble space as
$$
\tr^{\grad} u := u|_{\partial f} = 0, \quad u\in \mathbb B_{k}( f).
$$
%The notation $ \mathbb B_{k}( f)$ can be naturally extended to vertices: $\mathbb B_{k}({\bs x}) = \spa \{\lambda_{{\bs x}}\}$. 
Then we can write \eqref{eq:Prdec} as
\begin{equation}\label{eq:Lagdecbubble}
\mathbb P_k(T) = \mathbb P_1(T)\oplus \Oplus_{\ell = 1}^n\Oplus_{f\in \Delta_{\ell}(T)} \mathbb B_{k}( f).
\end{equation}
That is a polynomial of degree $k$ can be decomposed into a linear polynomial plus bubbles on edges, faces, and all sub-simplexes. 

Based on a conforming triangulation $\mathcal T_h$, the $k$-th order Lagrange finite element space $V_k^{\rm L}(\mathcal T_h)$ is defined as 
$$
V_k^{\rm L}(\mathcal T_h) = \{ v\in C(\Omega): v|_{T}\in \mathbb P_k(T),  T\in \mathcal T_h, \text{ and DoFs \eqref{eq:dofPr} are single valued}\},
$$
and will have a geometric decomposition
\begin{equation}\label{eq:VLdec}
V_k^{\rm L}(\mathcal T_h) = V_1^{\rm L}(\mathcal T_h) \,\oplus\, \Oplus_{\ell = 1}^n \Oplus_{f\in \Delta_{\ell}(\mathcal T_h)} \mathbb B_k(f).
\end{equation}
Here we extend the polynomial on $f$ to each element $T$ containing $f$ by the Bernstein form and extension of multi-index; see $E(\alpha)$ defined in \eqref{eq:Ealpha}. Consequently the dimension of $V_k^{\rm L}(\mathcal T_h)$ is
$$
\dim V_k^{\rm L}(\mathcal T_h) = \sum_{\ell = 0}^n |\Delta_{\ell}(\mathcal T_h)| { k - 1 \choose \ell},
$$
where $|\Delta_{\ell}(\mathcal T_h)|$ is the cardinality of  number of $\Delta_{\ell}(\mathcal T_h)$, i.e., the number of $\ell$-dimensional simplices in $\mathcal T_h$. We understand ${ k - 1 \choose \ell} = 0$ if $\ell > k-1$. That is the degree of the polynomial dictates the dimension of the sub-simplex in the geometric decompositions \eqref{eq:Prdec} and \eqref{eq:VLdec}. 
%For example, with $k=2$, the summation includes only edge bubbles and excludes face bubbles and higher dimensions. Despite this, the full summation notation $\Oplus_{\ell=0}^n$ is retained for simplicity, with the implicit understanding that the range of non-zero sub-spaces will automatically truncate the limits.

%\begin{remark} 
%For spaces associated to vertices, it can be changed to
%$$
%\spa \{\lambda_{i}^{r}, i=0, 1,\ldots, n\},
%$$
%which is consistent with the Bernstein representation $\lambda^{\alpha}$. 
%\end{remark}

The geometric decomposition \eqref{eq:Prdec} can be naturally extended to vector Lagrange
elements. For $k\geq 1$, define 
\begin{equation*}
\mathbb B_{k}^n(f) := b_f\mathbb P_{k - (\ell +1)} (f)\otimes \mathbb R^n.
\end{equation*}
Clearly we have
\begin{align}\label{eq:Pkvecdec}
\mathbb P_k^n(T)&= \mathbb P_1^n(T)\oplus \Oplus_{\ell = 1}^n\Oplus_{f\in \Delta_{\ell}(T)} \mathbb B_{k}^n(f).
\end{align}
For an $f\in \Delta_{\ell} (T)$, we choose a $t-n$ coordinate 
$\{\bs t_i^f, \bs n_j^f, i=1, \dots, \ell, j = 1, \dots, n-\ell\}$ so that
\begin{itemize}
\item $\mathscr T^f := \textrm{span}\{ \bs t_1^f, \ldots, \bs t_{\ell}^f \}, $ is the tangential plane of $f$;

\item $\mathscr{N}^f :=  \textrm{span}\{ \bs n_1^f, \ldots, \bs n_{n - \ell}^f \} $ is the normal plane of $f$. 
\end{itemize}
%Thus $\mathscr T^f$ is the tangent plane of $f$ and $\mathscr{N}^f$ is the normal plane. 
%For $r\geq 1$, define
%\begin{equation*}%\label{eq:fbubble}
%\mathbb B_k(f) \mathscr T^f_k( T) := \left \{\sum_{i=1}^{\ell} u_i^f \bs t_i^f: u_i^f \in \mathbb B_k(f) \right \},
%\end{equation*}
%\begin{equation*}
%\mathscr{N}^f_k( T) := \left \{\sum_{i=1}^{n-\ell} u_i^f \bs n_i^f:  u_i^f \in \mathbb B_k(f) \right \}.
%\end{equation*}
%We can also write 
%$$
%\mathscr T^f_k(T) = b_f\mathbb P_{k - (\ell +1)} (f)\otimes \mathscr T_0^f, \quad \mathscr{N}^f_k( T) = b_f\mathbb P_{k - (\ell +1)} (f)\otimes \mathscr{N}_0^f.
%$$
%Note that $\mathscr T^f_k( T), \mathscr{N}^f_k( T)\subseteq \mathbb P_k^n(T)$ for $r\geq 1$, while for $r=0$, $\mathscr T_0^f$ and $\mathscr{N}_0^f$ are independent of $T$. 
%
When $\ell = 0$, i.e., for vertices, no tangential component, and for $\ell = n$, no normal component. 
We abbreviate $\bs n_1^F$ as $\bs n_F$ for $F\in\Delta_{n-1}(T)$, and $\bs t_1^e$ as $\bs t_e$ for $e\in\Delta_{1}(T)$.
We have the trivial decompositions
\begin{equation}\label{eq:tndec}
\mathbb R^n = \mathscr T^{f}\oplus \mathscr N^f, \quad \mathbb B_{k}^n(f) = \left [\mathbb B_{k}(f)\otimes \mathscr T^{f}\right ]\Oplus \left [\mathbb B_{k}(f)\otimes\mathscr N^f \right ].
\end{equation}
Restricted to an $\ell$-dimensional sub-simplex $f\in \Delta_{\ell}(T)$,  define
$$ 
\mathbb B_{k}^{\ell}(f) := \mathbb B_k(f) \otimes \mathscr{T}^f,
$$ 
which is a space of $\ell$-dimensional vectors on the tangential space $\mathscr{T}^f$ with vanishing trace $\tr^{\grad}$ on $\partial f$. 
%As a corollary of Theorem \ref{thm:Lagrangedec} and $t-n$ decomposition \eqref{eq:tndec}, we have the following decomposition of the vector Lagrange elements. 
%We then rewrite the decomposition \eqref{eq:Pkvecdec} as
%\begin{align}\label{eq:Prvecdec2}
%\mathbb P_k^n(T)=\mathbb P_1^n(T) \,\oplus \Oplus_{\ell = 1}^{n}\Oplus_{f\in
%\Delta_{\ell}(T)} \left [ \mathbb B_k^{\ell}(f) \oplus (\mathbb B_k(f) \otimes \mathscr{N}^f) \right ].
%\end{align}

%\begin{corollary} \label{lm:vecLdec}
%Let $k\geq 1$ be an integer. For each $f\in \Delta_{\ell} (T)$, we choose a $t-n$ basis $\{\bs t_i^f, \bs n_j^f, i=1, \dots, \ell, j = 1, \dots, n-\ell\}$.
%Then a function $\bs u \in \mathbb P_k(T; \mathbb R^n)$ is uniquely determined by the DoFs
%\begin{subequations}\label{eq:dofvec}
%\begin{align}
%\label{eq:dofvectort}
%\int_f (\bs u\cdot \bs t_i^f)\ p \dd s,& \, i=1,\ldots, \ell, p\in \mathbb P_{k - (\ell +1)} (f), f\in \Delta_{\ell}(T), \ell = 1,\ldots, n,\\
%\label{eq:dofvectorn}
%\int_f (\bs u\cdot \bs n_j^f)\ p \dd s, & \, j=1,\ldots, n - \ell, p\in \mathbb P_{k - (\ell +1)} (f), f\in \Delta_{\ell}(T), \ell = 0,\ldots, n-1.
%\end{align}
%\end{subequations}
%%for all $p\in \mathbb P_{k - (\ell +1)} (f), f\in \Delta_{\ell}(T), \ell = 0,1,\ldots, n$.
%\end{corollary}

When move to a triangulation $\mathcal T_h$, we shall call a basis of $\mathscr T^f$ or $\mathscr N^f$ is global if it depends only on $f$ not the element $T$ containing $f$. Otherwise it is called local and may vary in different elements.

%We prove it is indeed surjective. 
%\begin{lemma}\label{lem:trdivonto}
%For integer $r\geq 1$, the mapping ${\rm tr}^{\div}: \mathbb P_k^n(T) \to \mathbb P_{k}(\partial T)$ is onto. 
%\end{lemma}

% Indeed we can give a geometric decomposition of $\mathbb B_k(\div, T)$. In the sequel, $\mathbb R^{\ell}(f)$ denotes the tangent plane of $F$ which can be naturally embedded into $\mathbb R^n$ through $F$. The vector function
%$$
%b_f\mathbb P_{k - (\ell +1)} (f)\otimes \mathbb R^{\ell}(f)
%$$
%can be identified with a differential form in $\mathbb P_k\Lambda^{n-1}$ through
%$$
%\prox^{-1} ({\rm Ext}(b_f\mathbb P_{k - (\ell +1)} (f)) \otimes {\rm Ext}(\mathbb R^{\ell}(f))).
%$$
%We will stick to the simpler vector function notation. 

\subsection{Lagrange interpolation basis functions}\label{sec:lagrangebasis}
Previously DoFs \eqref{eq:dofPr} are given by moments on sub-simplexes. Now we present a set of DoFs as function values on the interpolation points and give its dual basis for the $k$-th order Lagrange element on an $n$-simplex.

\begin{lemma}[Lagrange interpolation basis functions~\cite{nicolaides1972class}]\label{le:li}
A basis function of the $k$-th order Lagrange finite element space on $T$ is:
$$
\phi_{\boldsymbol \alpha}(\boldsymbol x) = 
\frac{1}{\boldsymbol \alpha!} 
\prod_{i=0}^{n}\prod_{j =0}^{\alpha_i - 1} (k\lambda_i(\boldsymbol x) - j), \quad 
\boldsymbol \alpha \in \mathbb T^n_k,
$$
with the DoFs defined as the function value at the interpolation points:
$$
N_{\bs \alpha} (u) = u(\boldsymbol x_{\alpha}), \quad \boldsymbol x_{\alpha} \in \mathcal X_{T}.
$$
\end{lemma}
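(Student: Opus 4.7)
The plan is to establish the lemma by direct verification: show each $\phi_{\bs\alpha}$ lies in $\mathbb P_k(T)$ and that the family is dual to the point-evaluation functionals $\{N_{\bs\beta}\}_{\bs\beta\in\mathbb T^n_k}$. Since $|\mathbb T^n_k| = \binom{n+k}{n} = \dim\mathbb P_k(T)$, duality immediately forces linear independence and hence the basis property, with the prescribed DoFs being the dual system.

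First I would observe that each inner factor $k\lambda_i(\bs x) - j$ is affine in $\bs x$, and the double product defining $\phi_{\bs\alpha}$ contains exactly $\sum_{i=0}^n \alpha_i = k$ such factors, so $\phi_{\bs\alpha}\in\mathbb P_k(T)$. Next, evaluate at an arbitrary interpolation point $\bs x_{\bs\beta}\in\mathcal X_T$, where by construction $\lambda_i(\bs x_{\bs\beta}) = \beta_i/k$. This gives
$$
\phi_{\bs\alpha}(\bs x_{\bs\beta}) \;=\; \frac{1}{\bs\alpha!}\prod_{i=0}^{n}\prod_{j=0}^{\alpha_i-1}(\beta_i - j).
$$

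The key case analysis: if there exists an index $i$ with $\beta_i < \alpha_i$, then the choice $j = \beta_i$ (which is in the range $0,\ldots,\alpha_i - 1$) produces a vanishing factor, killing the whole product. Otherwise $\beta_i \ge \alpha_i$ for every $i$, but then $\sum_i(\beta_i-\alpha_i) = k - k = 0$ together with non-negativity of the differences forces $\bs\beta = \bs\alpha$ entry by entry. In this diagonal case the inner product collapses to $\prod_{j=0}^{\alpha_i-1}(\alpha_i - j) = \alpha_i!$, so the outer product is $\bs\alpha!$, cancelling the prefactor and yielding $\phi_{\bs\alpha}(\bs x_{\bs\alpha}) = 1$. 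Hence $\phi_{\bs\alpha}(\bs x_{\bs\beta}) = \delta_{\bs\alpha,\bs\beta}$.

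Finally, the duality relation $N_{\bs\beta}(\phi_{\bs\alpha}) = \delta_{\bs\alpha,\bs\beta}$ implies that the $\binom{n+k}{n}$ functions $\{\phi_{\bs\alpha}\}$ are linearly independent in $\mathbb P_k(T)$, and a dimension count closes the argument: they form a basis, and the functionals $N_{\bs\beta}$ are unisolvent for $\mathbb P_k(T)$, recovering the classical nodal characterization of the Lagrange element. There is no real obstacle here; the only moment requiring care is the off-diagonal vanishing, which rests on the integrality and non-negativity of multi-indices together with the common-degree constraint $|\bs\alpha|=|\bs\beta|=k$.
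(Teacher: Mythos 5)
Your proposal is correct and follows essentially the same route as the paper: verify the duality $\phi_{\bs\alpha}(\bs x_{\bs\beta})=\delta_{\bs\alpha,\bs\beta}$ and then invoke the cardinality count $|\mathbb T^n_k|=\binom{n+k}{k}=\dim\mathbb P_k(T)$ to conclude the basis and unisolvence properties. The only difference is that you spell out the "straightforward" duality check (degree count and the off-diagonal vanishing via the factor $j=\beta_i$ together with the constraint $|\bs\alpha|=|\bs\beta|=k$), which the paper leaves to the reader.
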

\begin{proof}
It is straightforward to verify the duality of the basis and DoFs 
$$
N_{\bs \beta}(\phi_{\bs \alpha}) = \phi_{\bs \alpha}(\bs x_{\beta}) = \delta_{\bs \alpha,\bs \beta} = 
\begin{cases}
1& \text{ if } \bs \alpha = \bs \beta\\
0 & \text{ otherwise }
\end{cases}.
$$
As $$| \mathbb T^{n}_k | = {n + k \choose k} = \dim \mathbb P_k(T),$$ $\{\phi_{\boldsymbol \alpha}, \bs \alpha\in  \mathbb T^{n}_k\}$ is a basis of $\mathbb P_k(T)$ and $\{N_{\boldsymbol \alpha}, \bs \alpha\in  \mathbb T^{n}_k\}$ is a basis of the dual space $\mathbb P_k^*(T)$.
\end{proof}
%Note that the basis functions can be also indexed by $R_n(\boldsymbol \alpha)$.  

Given a triangulation $\mathcal T_h$ and
degree $k$, recall that
$$
\mathcal X_{\mathcal T_h} = \bigcup_{T\in\mathcal T_h} \mathcal X_T = \Delta_{0}(\mathcal T_h)\oplus\Oplus_{\ell = 1}^n \Oplus_{f\in \Delta_{\ell}(\mathcal T_h)} \mathcal X_{\mathring{f}}.
$$
Denote by
$$
\mathbb T_k^n(\mathcal T_h) := \Oplus_{\bs x_i\in \Delta_0(\mathcal T_h)}\mathbb T_k^{0}(\bs x_i) \oplus \Oplus_{\ell = 1}^n \Oplus_{f\in \Delta_{\ell}(\mathcal T_h)} \mathbb T_k^{\ell}(\mathring{f}).
$$
For a lattice $\alpha \in \mathbb T_k^{\ell}(\mathring{f})$, we use extension operator $E$ defined in \eqref{eq:Ealpha} to extend $\alpha$ to each simplex $T$ containing $f$. We also extend the polynomial on $f$ to $T$ by the Bernstein form.
Hereafter, for simplicity, we set $\mathcal X_{\mathring{\texttt{v}}}:=\mathcal X_{\texttt{v}}$ and $\mathbb T_k^{0}(\mathring{\texttt{v}}):=\mathbb T_k^{0}(\texttt{v})$ for vertex $\texttt{v}\in\Delta_{0}(\mathcal T_h)$.

\begin{theorem}[DoFs of Lagrange finite element on $\mathcal T_h$]
A basis for the $k$-th Lagrange finite element space $V_k^{\rm L}(\mathcal T_h)$ 
is 
$$\{\phi_{\boldsymbol \alpha}, \bs \alpha\in  \mathbb T^{n}_k(\mathcal T_h)\}$$ with DoFs
$$
N_{\bs \alpha} (u) = u(\boldsymbol x_{\alpha}), \quad \boldsymbol x_{\alpha} \in \mathcal X_{\mathcal T_h}.
$$
\end{theorem}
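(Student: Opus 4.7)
The plan is to reduce this global statement to the element-level Lemma~\ref{le:li} by exploiting the geometric decomposition \eqref{eq:VLdec} and the inter-element compatibility enforced by sharing interpolation points on sub-simplices. For each $\bs\alpha\in\mathbb T^n_k(\mathcal T_h)$, by construction $\bs\alpha$ is associated with a unique sub-simplex $f\in\Delta_\ell(\mathcal T_h)$ and a lattice point in $\mathbb T^\ell_k(\mathring f)$. I would define $\phi_{\bs\alpha}$ piecewise: on every $T\supseteq f$ in $\mathcal T_h$, use the formula of Lemma~\ref{le:li} with the extended multi-index $E(\bs\alpha)\in\mathbb T^n_k(T)$ from \eqref{eq:Ealpha}, and set $\phi_{\bs\alpha}\equiv 0$ on all other elements.

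The main obstacle is verifying $\phi_{\bs\alpha}\in C(\Omega)$ so that $\phi_{\bs\alpha}\in V_k^{\rm L}(\mathcal T_h)$. For a shared sub-simplex $g\in\Delta_{n-1}(\mathcal T_h)$ between two adjacent elements $T_1,T_2$, I would split into two cases. If $f\subseteq g$, then in each $T_i$ the extended multi-index $E(\bs\alpha)$ has zero components outside $f$, so the product defining $\phi_{\bs\alpha}|_{T_i}$ involves only the barycentric coordinates $\lambda_{f(0)},\ldots,\lambda_{f(\ell)}$; these restrict identically to $g$ from both sides, hence $\phi_{\bs\alpha}|_{T_1\cap g}=\phi_{\bs\alpha}|_{T_2\cap g}$. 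If $f\not\subseteq g$, some vertex $\bs x_{f(i)}\notin g$, so $\lambda_{f(i)}|_g\equiv 0$; since $\bs\alpha$ lies in the interior lattice $\mathbb T^\ell_k(\mathring f)$ we have $E(\bs\alpha)_{f(i)}=\alpha_i\geq 1$, and the factor $(k\lambda_{f(i)}-0)$ appears in the product, forcing $\phi_{\bs\alpha}|_g=0$ from both sides. Propagating this across all interior faces of $\mathcal T_h$ yields continuity, and a similar argument handles lower-dimensional intersections.

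Once continuity is established, duality is immediate: for any $\bs x_{\bs\beta}\in\mathcal X_{\mathcal T_h}$ lying on some sub-simplex $g$, pick any element $T$ containing both $g$ and (if nonempty) $f$; if $f\not\subseteq\{T\text{ containing }\bs x_{\bs\beta}\}$ the vanishing argument above gives $\phi_{\bs\alpha}(\bs x_{\bs\beta})=0$, and otherwise we apply the element-level duality $\phi_{E(\bs\alpha)}(\bs x_{E(\bs\beta)})=\delta_{E(\bs\alpha),E(\bs\beta)}$ from Lemma~\ref{le:li}, which collapses to $\delta_{\bs\alpha,\bs\beta}$ since $E$ is injective on each $\mathbb T^\ell_k(\mathring f)$.

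Finally, I would count: by the decomposition \eqref{eq:VLdec},
$$
\dim V_k^{\rm L}(\mathcal T_h)=\sum_{\ell=0}^n\sum_{f\in\Delta_\ell(\mathcal T_h)}|\mathbb T^\ell_k(\mathring f)|=|\mathbb T^n_k(\mathcal T_h)|.
$$
The duality relation just proved forces linear independence of $\{\phi_{\bs\alpha}:\bs\alpha\in\mathbb T^n_k(\mathcal T_h)\}$ in $V_k^{\rm L}(\mathcal T_h)$, and matching the cardinality with the dimension upgrades the family to a basis, with $\{N_{\bs\alpha}\}$ as its dual basis. The only genuinely non-routine step is the continuity verification; everything else reduces mechanically to the element-level result.
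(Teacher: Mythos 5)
Your proposal is correct and follows the same skeleton as the paper's proof: establish that each $\phi_{\bs\alpha}$ lies in $V_k^{\rm L}(\mathcal T_h)$, verify the duality $N_{\bs\beta}(\phi_{\bs\alpha})=\delta_{\bs\alpha,\bs\beta}$ to get linear independence, and match the cardinality of the family with $\dim V_k^{\rm L}(\mathcal T_h)$ via \eqref{eq:VLdec}. The one place where you diverge is the continuity verification, which you correctly identify as the only non-routine step: the paper dispatches it in one line by noting that the trace $\phi_{\bs\alpha}|_F$ on a shared face $F$ is a degree-$k$ polynomial on the $(n-1)$-simplex $F$ that is uniquely determined (by Lemma~\ref{le:li} applied to $F$) by its values at $\mathcal X_F$, and those values are single-valued by construction; you instead check continuity directly from the explicit product formula, splitting into the cases $f\subseteq g$ (where only the barycentric coordinates of vertices of $f$ enter the product and these restrict consistently to $g$ from both sides) and $f\not\subseteq g$ (where a factor $k\lambda_{f(i)}$ with $\alpha_i\geq 1$ kills the trace). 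Both arguments are valid; the paper's is shorter and generalizes immediately to any nodal set with the unisolvence property on faces, while yours is more self-contained, makes the piecewise definition via the extension operator $E$ explicit, and handles in the same stroke the vanishing of $\phi_{\bs\alpha}$ on elements not containing $f$, which the paper leaves implicit.
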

\begin{proof}
For $F\in \Delta_{n-1}(\mathcal T_h)$, thanks to Lemma~\ref{le:li}, $\phi_{\boldsymbol \alpha}|_F$ is uniquely determined by DoFs $\{u(\boldsymbol x_{\alpha}), \boldsymbol x_{\alpha} \in \mathcal X_{F}\}$ on face $F$, hence $\phi_{\boldsymbol \alpha}\in V_k^{\rm L}(\mathcal T_h)$. Clearly the cardinality of $\{\phi_{\boldsymbol \alpha}, \bs \alpha\in  \mathbb T^{n}_k(\mathcal T_h)\}$ is same as the dimension of space $V_k^{\rm L}(\mathcal T_h)$. Then we only need to show these functions are linearly independent,  which follows from the fact $N_{\bs \beta}(\phi_{\bs \alpha}) = \phi_{\bs \alpha}(\bs x_{\beta}) = \delta_{\bs \alpha,\bs \beta}$ for $\bs \alpha, \bs \beta\in\mathbb T^{n}_k(\mathcal T_h)$.
\end{proof}

We now generalize the basis for a scalar Lagrange element to a vector Lagrange element.
For an interpolation point $\bs x\in \mathcal X_{T}$, let $\{\bs e_i^{\bs x}, i=0,\ldots, n-1\}$ be a basis of $\mathbb R^n$, and its dual basis is denoted by $\{\hat{\bs e}_i^{\bs x}, i=0,\ldots, n-1\}$, i.e.,
$$
(\hat{\bs e}_i^{\bs x}, \bs e_j^{\bs x}) = \delta_{i,j},
$$
where $(\boldsymbol a,\boldsymbol b) = \boldsymbol a\cdot \boldsymbol b$ is the inner product of two vectors in $\mathbb R^n$.
When $\{\bs e_i^{\bs x}, i=0,\ldots, n-1\}$ is orthonormal, its dual basis is itself. 
% In general, we can always choose an orthonormal basis for the tangential plane $\mathscr T^f$ but for the normal plane $\mathscr N^f$, we can use Lemma \ref{lm:dualnormal} to find the dual basis. 

\begin{corollary}\label{cor:basisfunctions}
    A polynomial function $\bs u \in \mathbb P_k^n(T)$ can be uniquely determined by the DoFs:
  $$
N^i_{\bs \alpha }(\bs u):=  \bs u(\bs x_{\bs \alpha})\cdot \bs e^{\bs x_{\bs \alpha}}_i, \quad \bs x_{\bs \alpha} \in \mathcal X_{T}, i = 0,\ldots, n-1.
  $$
  The basis function on $T$ dual to this set of DoFs can be 
  explicitly written as:
  $$
  \bs{\phi}_{\bs \alpha}^i(\bs x) = 
  \phi_{\bs\alpha}(\bs x) \hat{\bs e}^{\bs x_{\bs \alpha}}_i, \quad 
  \boldsymbol \alpha \in \mathbb T^n_k, i = 0, \ldots, n-1.
  $$
\end{corollary}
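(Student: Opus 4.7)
The plan is to mimic the scalar argument of Lemma~\ref{le:li}, lifted to the vector setting by a pointwise tensor product with the frame $\{\bs e_i^{\bs x}\}$ and its dual $\{\hat{\bs e}_i^{\bs x}\}$. The two ingredients are (i) a duality check between the proposed basis and DoFs, and (ii) a dimension count. Since $\dim \mathbb P_k^n(T) = n\,\dim \mathbb P_k(T) = n\,|\mathbb T^n_k|$, and the index set of the candidate basis has exactly this cardinality, unisolvence will follow as soon as duality is established.

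First I would verify duality. For $\bs\alpha,\bs\beta\in\mathbb T^n_k$ and $i,j\in\{0,\ldots,n-1\}$, compute
\begin{equation*}
N^j_{\bs\beta}(\bs\phi_{\bs\alpha}^i)
= \bs\phi_{\bs\alpha}^i(\bs x_{\bs\beta})\cdot \bs e_j^{\bs x_{\bs\beta}}
= \phi_{\bs\alpha}(\bs x_{\bs\beta})\,\bigl(\hat{\bs e}_i^{\bs x_{\bs\alpha}}\cdot \bs e_j^{\bs x_{\bs\beta}}\bigr).
\end{equation*}
By Lemma~\ref{le:li}, $\phi_{\bs\alpha}(\bs x_{\bs\beta}) = \delta_{\bs\alpha,\bs\beta}$, so the expression vanishes unless $\bs\alpha=\bs\beta$. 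When $\bs\alpha=\bs\beta$, the two frames coincide and the defining duality $(\hat{\bs e}_i^{\bs x_{\bs\alpha}},\bs e_j^{\bs x_{\bs\alpha}})=\delta_{ij}$ yields $N^j_{\bs\beta}(\bs\phi_{\bs\alpha}^i) = \delta_{\bs\alpha,\bs\beta}\,\delta_{ij}$. Note that when $\bs\alpha\neq\bs\beta$ the scalar factor already kills the term, so it is irrelevant that the frames at $\bs x_{\bs\alpha}$ and $\bs x_{\bs\beta}$ may be unrelated; this is the one place to pause and make sure nothing is swept under the rug.

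Next I would close the argument by counting. The family $\{\bs\phi_{\bs\alpha}^i : \bs\alpha\in\mathbb T^n_k,\ 0\le i\le n-1\}$ has $n\,|\mathbb T^n_k| = \dim\mathbb P_k^n(T)$ elements, and the duality relation above shows it is linearly independent in $\mathbb P_k^n(T)$, hence a basis. The dual relation also shows that $\{N_{\bs\alpha}^i\}$ is the corresponding dual basis in $\bigl(\mathbb P_k^n(T)\bigr)^*$, so any $\bs u\in\mathbb P_k^n(T)$ admits the unique expansion
\begin{equation*}
\bs u(\bs x) = \sum_{\bs\alpha\in\mathbb T^n_k}\sum_{i=0}^{n-1} N_{\bs\alpha}^i(\bs u)\,\bs\phi_{\bs\alpha}^i(\bs x),
\end{equation*}
which establishes both the unisolvence of the DoFs and the claimed explicit formula for the dual basis.

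There is no real obstacle here; the only care needed is in keeping the two frame systems (the primal $\bs e_i$ used in the DoFs and its dual $\hat{\bs e}_i$ used in the basis) consistently paired at each interpolation point, which is precisely what makes the pointwise Kronecker factor $\delta_{ij}$ appear. In the special orthonormal case $\hat{\bs e}_i^{\bs x}=\bs e_i^{\bs x}$, the computation collapses to the obvious one, confirming that the construction reduces to the familiar vector Lagrange basis.
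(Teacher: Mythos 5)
Your proof is correct and follows essentially the same route as the paper: verify the duality $N^j_{\bs\beta}(\bs\phi_{\bs\alpha}^i)=\delta_{i,j}\delta_{\bs\alpha,\bs\beta}$ using $\phi_{\bs\alpha}(\bs x_{\bs\beta})=\delta_{\bs\alpha,\bs\beta}$ and the frame duality at the common point. The paper's proof stops at the duality check, while you additionally spell out the dimension count, which is a harmless elaboration of the same argument.
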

 \begin{proof}
It is straightforward to verify the duality
$$
N^j_{\bs \beta }(\bs{\phi}_{\bs \alpha}^i) =   \bs{\phi}_{\bs \alpha}^i(\bs x_{\beta}) \cdot \bs e^{\bs x_{\beta}}_j  =   \phi_{\bs \alpha}(\bs x_{\beta}) \hat{\bs e}^{\bs x_{\bs \alpha}}_i \cdot \bs e^{\bs x_{\beta}}_j  = \delta_{i,j} \delta_{\bs \alpha,\bs \beta},
$$ 
for $\bs \alpha, \bs \beta\in  \mathbb T^n_k, i,j = 0,\ldots, n-1.$
\end{proof}
If the basis $\{\bs e_i^{f}, i=0,\ldots, n-1\}$ is global in the sense that it is independent of element $T$ containing $\bs x$, we get the continuous vector Lagrange elements. Choose different basis will give different continuity.

\section{Geometric Decompositions of Face Elements}\label{sec:facefem}
Define $H(\div, \Omega):=\{\bs v\in L^2(\Omega;\mathbb R^n): \div \bs v\in L^2(\Omega)\}.$ For a subdomain $K\subseteq \Omega$, the trace operator for the div operator is
$$
{\rm tr}_K^{\div} \boldsymbol v = \boldsymbol n\cdot \boldsymbol v|_{\partial K} \quad \textrm{ for }\;\;\boldsymbol{v}\in H(\div, \Omega),
$$
where $\bs n$ denotes the outwards unit normal vector of $\partial K$. 
Given a triangulation $\mathcal T_h$ and a piecewise smooth function $\bs u$, it is well known that $\bs u\in H(\div, \Omega)$ if and only if $\bs n_F \cdot \bs u$ is continuous across all faces $F\in \Delta_{n-1}(\mathcal T_h)$, which can be ensured by having DoFs on faces. %where $\bs n_F$ is a global normal vector of $F$. 
An $H(\div)$-conforming finite element is thus also called a face element.

%We shall consider Brezzi-Douglas-Marini (BDM) element~\cite{BrezziDouglasMarini1985} or second family of N\'ed\'elec face element~\cite{Nedelec1986,BrezziDouglasDuranFortin1987,ArnoldFalkWinther2006} whose shape function space is full polynomial space $\mathbb P_k^n$. 

\subsection{Geometric decomposition}
Define the polynomial $\div$ bubble space
$$
\mathbb B_k(\div, T) = \ker(\tr_T^{\div})\cap \mathbb P_k^n(T).
$$
Recall that $\mathbb B_{k}^{\ell}(f) = \mathbb B_k(f) \otimes \mathscr{T}^f$ consists of bubble polynomials on the tangential plane of $f$. For $\bs u\in \mathbb B_{k}^{\ell}(f)$, as $\bs u$ is on the tangent plane, $\bs u\cdot \bs n_F = 0$ for $f\subseteq F$. When $f\not\subseteq F$, $b_f|_F =0$. So $\mathbb B_{k}^{\ell}(f)\subseteq \mathbb B_k(\div, T)$
for $k\geq 2$ and $\dim f \geq 1$. In~\cite{Chen;Huang:2021Geometric}, 
we have proved that the $\div$-bubble polynomial space has the following decomposition.
\begin{lemma}\label{lem:divbubble}
For $k\geq 2$,
$$
\mathbb B_k(\div, T) = \Oplus_{\ell=1}^n\Oplus_{f\in\Delta_{\ell}(T)}\mathbb B_{k}^{\ell}(f).
$$
\end{lemma}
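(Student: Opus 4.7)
The paper already records, in the sentences immediately preceding the lemma, that each summand $\mathbb B_k^\ell(f)$ on the right is contained in $\mathbb B_k(\div,T)$ for $\ell\geq 1$, so only the directness of the sum and the reverse inclusion remain. My plan is to get directness for free from the ambient vector Lagrange decomposition, and then upgrade the known containment to equality by a dimension count.

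For directness, combining \eqref{eq:Pkvecdec} with the t-n splitting \eqref{eq:tndec} on each summand yields
$$
\mathbb P_k^n(T) = \mathbb P_1^n(T) \oplus \Oplus_{\ell=1}^n\Oplus_{f\in\Delta_\ell(T)} \bigl[\mathbb B_k^\ell(f) \oplus (\mathbb B_k(f)\otimes\mathscr N^f)\bigr].
$$
The summands proposed for $\mathbb B_k(\div,T)$ on the right-hand side of the lemma are therefore a subfamily of genuine direct summands of $\mathbb P_k^n(T)$, so their sum is automatically direct. This also gives, as a bonus, a canonical complement
$$
W \;:=\; \mathbb P_1^n(T) \,\oplus\, \Oplus_{\ell=1}^n\Oplus_{f\in\Delta_\ell(T)} \bigl(\mathbb B_k(f)\otimes\mathscr N^f\bigr),
$$
with respect to which equality in the lemma is equivalent to $W\cap \mathbb B_k(\div,T)=\{\bs 0\}$.

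For equality, I would proceed by matching dimensions. The standard surjectivity of the normal trace $\tr^{\div}\colon \mathbb P_k^n(T)\to \Oplus_{F\in\Delta_{n-1}(T)} \mathbb P_k(F)$ (a classical BDM fact) gives
$$
\dim \mathbb B_k(\div,T) \;=\; n\binom{n+k}{n} - (n+1)\binom{n+k-1}{n-1},
$$
so the proof reduces to the combinatorial identity
$$
\sum_{\ell=1}^n \binom{n+1}{\ell+1}\,\ell\,\binom{k-1}{\ell} \;=\; n\binom{n+k}{n} - (n+1)\binom{n+k-1}{n-1},
$$
whose left-hand side is $\sum_\ell |\Delta_\ell(T)|\cdot \dim \mathbb B_k^\ell(f)$; this is a routine Vandermonde-style manipulation. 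Combined with the two previous paragraphs and the easy inclusion already in the text, this closes the argument.

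The main obstacle I anticipate is exactly this dimension identity, or equivalently, the surjectivity of $\tr^{\div}$ needed to compute $\dim \mathbb B_k(\div,T)$ in the first place. A more conceptual alternative that sidesteps the combinatorics is to show $W\cap\mathbb B_k(\div,T)=\{\bs 0\}$ directly: restrict $\bs u\in W$ face-by-face, noting that on each $F\in\Delta_{n-1}(T)$ the only summands of $W$ surviving the restriction come from sub-simplices $f\subseteq F$ (because $b_f|_F\equiv 0$ otherwise), then choose the frames $\{\bs n_j^f\}$ compatibly so that $\bs n_F$ always figures among the normal directions attached to every $f\subseteq F$; the resulting face-wise identity can then be collapsed using the scalar Lagrange decomposition \eqref{eq:Lagdecbubble} applied on $F$, forcing each coefficient to vanish.
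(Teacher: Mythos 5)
Your argument is correct, and it is worth noting that the paper itself gives no proof of this lemma (it defers to an earlier work of the authors), so your write-up is a genuine self-contained alternative. The directness claim is sound: the summands $\mathbb B_k^{\ell}(f)$ are a subfamily of the direct summands of $\mathbb P_k^n(T)$ obtained by combining \eqref{eq:Pkvecdec} with \eqref{eq:tndec}, and the inclusion $\Oplus_{\ell,f}\mathbb B_k^{\ell}(f)\subseteq \mathbb B_k(\div,T)$ is already recorded in the text, so by the modular law equality is indeed equivalent to $W\cap\mathbb B_k(\div,T)=\{\bs 0\}$. Your combinatorial identity also checks out: since $|\Delta_{\ell}(T)|=\binom{n+1}{\ell+1}$ and $\dim\mathbb B_k^{\ell}(f)=\ell\binom{k-1}{\ell}$, writing $\ell\binom{k-1}{\ell}=(k-1)\binom{k-2}{\ell-1}$ and applying Vandermonde collapses the left-hand side to $(k-1)\binom{n+k-1}{n-1}$, which equals $n\binom{n+k}{n}-(n+1)\binom{n+k-1}{n-1}$. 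The one point that needs care is the surjectivity of $\tr^{\div}$ onto $\Oplus_{F}\mathbb P_k(F)$: within this paper that fact is packaged in \eqref{eq:divpolynomialdecomp2}, whose proof in Theorem~\ref{thm:BDMgeodecomp} invokes the very lemma you are proving, so you must either import it from the classical BDM literature or establish it independently (for instance by lifting the Lagrange decomposition \eqref{eq:Prdec} of each $\mathbb P_k(F)$ into $T$ using the dual normal frames of Lemma~\ref{lm:dualnormal}). You flag exactly this obstacle, and your fallback --- showing $W\cap\mathbb B_k(\div,T)=\{\bs 0\}$ by restricting to each face $F$, discarding the summands with $f\not\subseteq F$ because $b_f|_F=0$, expanding each normal part in the frame dual to $\{\bs n_{F'}\colon F'\supseteq f\}$ so that $\bs u\cdot\bs n_F|_F$ isolates one coefficient per $f\subseteq F$, and then invoking the uniqueness of the scalar decomposition \eqref{eq:Prdec} on $F$ (plus the fact that the $n$ face normals at a vertex span $\mathbb R^n$, to kill the $\mathbb P_1^n$ part) --- closes that gap with no dimension count at all; it is essentially the argument behind \eqref{eq:divpolynomialdecomp2} run in reverse. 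Either route completes the proof.
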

Notice that as no tangential plane on vertices, there is no $\div$-bubble associated to vertices and consequently the degree of a $\div$-bubble polynomial is greater than or equal to $2$. 
%
%The volume Lagrange bubble $\mathbb B_k^n(T)$ has no contribution to the trace. 
Next we present two geometric decompositions of a $\div$-element. 

%This implies $\bs u({\bs x})=\bs0$ for ${\bs x}\in\Delta_0(T)$ and $\bs u\in\mathbb B_k(\div, T)$. And we have (cf. Lemma~3.6 in~\cite{ChenHuang2021})
%\begin{equation}\label{eq:divbubbleonto}
%\div \mathbb B_k(\div, T)=\mathbb P_{k-1}(T)/\mathbb R.
%\end{equation}

%For $F\in\Delta_{n-1}(T)$, \Oplus_{F\in\Delta_{n-1}(T)}
%$$
%\mathbb B_k(\div; F) = \mathbb P_1(F)\bs n_F\oplus \Oplus_{\ell=1}^{n-1}\Oplus_{f\in\Delta_{\ell}(T), f\subseteq F}\mathbb B_{k}( f)\bs n_F.
%$$
\begin{theorem}\label{thm:BDMgeodecomp}
For $k\geq 1$, we have
\begin{align}
\label{eq:divpolynomialdecomp}
\mathbb P_k^n(T) &= \mathbb P_1^n(T) \oplus \big(\Oplus_{\ell=1}^{n-1}\Oplus_{f\in\Delta_{\ell}(T)}(\mathbb B_{k}( f) \otimes \mathscr{N}^f)\big)\oplus \mathbb B_k(\div, T),\\
\label{eq:divpolynomialdecomp2}
\mathbb P_k^n(T) &= \big(\Oplus_{F\in\Delta_{n-1}(T)} \left ( \mathbb P_k(F)\bs n_F \right )\big)\oplus \mathbb B_k(\div, T).
\end{align}
\end{theorem}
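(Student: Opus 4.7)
The strategy is to derive both identities from the vector Lagrange decomposition \eqref{eq:Pkvecdec} together with the tangential-normal splitting \eqref{eq:tndec}, then regroup the summands in two different ways. For \eqref{eq:divpolynomialdecomp}, apply \eqref{eq:tndec} to each $\mathbb B_k^n(f)$ in \eqref{eq:Pkvecdec} for $1\leq \dim f\leq n-1$; the top term $\mathbb B_k^n(T)$ at $\ell=n$ is entirely tangential since $\mathscr N^T=0$. Collecting the tangential pieces $\mathbb B_k^\ell(f)$ over $\ell=1,\ldots,n$ reassembles $\mathbb B_k(\div,T)$ by Lemma \ref{lem:divbubble}, while the normal pieces for $1\leq \ell\leq n-1$ form the middle summand of \eqref{eq:divpolynomialdecomp}.

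For \eqref{eq:divpolynomialdecomp2}, we make a specific choice of basis of $\mathscr N^f$ for each $f\in\Delta_\ell(T)$ with $0\leq\ell\leq n-1$, namely $\{\bs n_{F_i}:i\in f^*\}$. These $n-\ell$ vectors lie in $\mathscr N^f$ because $F_i\supseteq f$ whenever $i\in f^*$, and they span $\mathscr N^f$ by the transversality identity $\bigcap_{i\in f^*}F_i=f$, which forces $\bigcap_{i\in f^*}\bs n_{F_i}^\perp=\mathscr T^f$; taking orthogonal complements and counting dimensions, they form a basis. With this choice,
\[
\mathbb B_k(f)\otimes\mathscr N^f=\Oplus_{i\in f^*}\mathbb B_k(f)\bs n_{F_i}\ \ (\ell\geq 1),\qquad \mathbb P_1^n(T)=\Oplus_{v\in\Delta_0(T)}\Oplus_{i\in v^*}\mathbb R\,\lambda_v\bs n_{F_i}.
\]

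Substituting these into \eqref{eq:divpolynomialdecomp} and reversing the order of summation so that the outer index becomes $F=F_i\in\Delta_{n-1}(T)$, the subsimplices $f$ contributing a multiple of $\bs n_F$ are exactly those with $f\subseteq F$, i.e., all $f\in\Delta_\ell(F)$ for $\ell=0,1,\ldots,n-1$. Applying the scalar Lagrange decomposition \eqref{eq:Lagdecbubble} to the $(n-1)$-simplex $F$, the inner sum collapses to
\[
\Big(\mathbb P_1(F)\oplus\Oplus_{\ell=1}^{n-1}\Oplus_{f\in\Delta_\ell(F)}\mathbb B_k(f)\Big)\bs n_F=\mathbb P_k(F)\bs n_F,
\]
which yields \eqref{eq:divpolynomialdecomp2}. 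The main obstacle is the geometric identification $\mathscr N^f=\operatorname{span}\{\bs n_{F_i}:i\in f^*\}$; once that is settled, everything else is a reindexing of an already direct sum and directness is preserved automatically.
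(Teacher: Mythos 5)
Your proof is correct and follows essentially the same route as the paper: obtain \eqref{eq:divpolynomialdecomp} by regrouping the tangential pieces of \eqref{eq:Pkvecdec} into $\mathbb B_k(\div, T)$ via Lemma \ref{lem:divbubble}, then choose $\{\bs n_{F_i}: i\in f^*\}$ as the basis of $\mathscr N^f$, swap the order of summation so $F$ is outermost, and apply the scalar Lagrange decomposition on each $F$. Your explicit transversality argument that these face normals form a basis of $\mathscr N^f$ is a detail the paper leaves implicit (it is confirmed later by Lemma \ref{lm:dualnormal}), but otherwise the two proofs coincide.
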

\begin{proof}
The first decomposition \eqref{eq:divpolynomialdecomp} is a rearrangement of \eqref{eq:Pkvecdec} by merging the tangential component $\mathbb B_{k}^{\ell}( f) $ into the bubble space $\mathbb B_k(\div, T)$. 

Next we prove the decomposition \eqref{eq:divpolynomialdecomp2}.
For an $\ell$-dimensional, $0\leq \ell\leq n-1$, sub-simplex $f\in\Delta_{\ell}(T)$, we choose the $n-\ell$ face normal vectors $\{\bs n_{F}: F\in \Delta_{n-1}(T), f\subseteq F\}$ as the basis of $\mathscr{N}^f$. Therefore  we have
$$
\mathbb B_{k}( f) \otimes \mathscr{N}^f =\Oplus_{F\in \Delta_{n-1}(T), f\subseteq F}\mathbb B_{k}( f)\bs n_F.
$$
Then \eqref{eq:divpolynomialdecomp} becomes
\begin{equation*}%\label{eq:divpolynomialdecomp1}
\mathbb P_k^n(T) = \mathbb P_1^n(T) \oplus \big(\Oplus_{\ell=1}^{n-1}\Oplus_{f\in\Delta_{\ell}(T)}\Oplus_{F\in \Delta_{n-1}(T), f\subseteq F}\mathbb B_{k}( f)\bs n_F\big)\oplus \mathbb B_k(\div, T).
\end{equation*}
At a vertex $\texttt{v}$, we choose $\{\bs n_{F}: F\in \Delta_{n-1}(T), \texttt{v}\in F\}$ as the basis of $\mathbb R^n$ and write 
\begin{align*}  
\mathbb P_1^n(T) 
%&= \Oplus_{{\bs x}\in \Delta_{0}(T)} {\rm span}\{\lambda_{{\bs x}}\}\otimes \mathscr{N}^{{\bs x}}\\
&= \Oplus_{{\bs x}\in \Delta_{0}(T)}\Oplus_{F\in\Delta_{n-1}(T), {\bs x}\in\Delta_0(F)}{\rm span}\{ \lambda_{{\bs x}}\bs n_F\} \\
&= \Oplus_{F\in\Delta_{n-1}(T)}\Oplus_{{\bs x}\in \Delta_{0}(F)}{\rm span}\{ \lambda_{{\bs x}}\bs n_F\}\\
&= \Oplus_{F\in\Delta_{n-1}(T)} \mathbb P_1(F)\bs n_F.
\end{align*}
Then by swapping the ordering of $f$ and $F$ in the direct sum, i.e.,
$$
\Oplus_{\ell=1}^{n-1}\Oplus_{f\in\Delta_{\ell}(T)}\Oplus_{F\in \Delta_{n-1}(T), f\subseteq F} \to \Oplus_{F\in \Delta_{n-1}(T)}\Oplus_{\ell=1}^{n-1}\Oplus_{f\in\Delta_{\ell}(F)},
$$
and using the decomposition \eqref{eq:Prdec} of Lagrange element, we obtain the decomposition \eqref{eq:divpolynomialdecomp2}.
%
%For edge elements, we distribute $n$-components of a vector at a vertex to $n$ edges co\mc{NN}ected this vertex. 
%Thus decomposition \eqref{eq:divpolynomialdecomp2} follows.
% a vector $\bs u({\bs x}) \in \mathbb R^n$ and can expand in the basis $\{\bs n_{F}: F\in \Delta_{n-1}(T), {\bs x}\subseteq F\}$, i.e.
% $$
% \bs u({\bs x}) = \sum_{F_i\in \Delta_{n-1}(T), {\bs x}\subseteq F_i} (\bs u({\bs x}), \bs l_i)\, \bs n_{F_i},
% $$
% where $\{\bs l_i\}$ is the dual vector of $\{\bs n_{F_i}\}$. Now look at one face $F$, we then have $n$ values at $n$ vertices of $F$, $\{(\bs u({\bs x}), \bs l_F)\, \bs n_{F}, {\bs x}\in F \}$ which will define a function in $\mathbb P_1(F)\bs n_F$. 
% For $i=0,\ldots, n$ and $F\in\Delta_{n-1}(T)$, apparently $\lambda_i\bs n_F\in \mathbb P_1(F)\bs n_F$ if $i\in F$ and  $\lambda_i\bs n_F\in \sum_{f\in\Delta_{n-1}(T), f\neq F}\mathbb P_1(f)\bs n_f^k$ if $i\not\in F$. Hence $\mathbb P_1^n(T)=\sum_{F\in\Delta_{n-1}(T)}\mathbb P_1(F)\bs n_F$, which together with the matching dimensions yields $\mathbb P_1^n(T)=\Oplus_{F\in\Delta_{n-1}(T)}\mathbb P_1(F)\bs n_F$.
\end{proof}

In decomposition \eqref{eq:divpolynomialdecomp}, we single out $\mathbb P_1^n(T)$ to emphasize an $H(\div)$-conforming element can be obtained by adding $\div$-bubble and normal component on sub-simplexes starting from edges. In~\eqref{eq:divpolynomialdecomp2}, we group all normal components facewisely which leads to the classical BDM element.

As $H(\div,\Omega)$-conforming elements require the normal continuity across each $F$, the normal vector $\bs n_F$ is chosen globally. Namely $\bs n_F$ depends on $F$ only. It may coincide with the outwards or inwards normal vector for an element $T$ containing $F$. On the contrary, all tangential basis for $\mathscr T^f$ are local and thus the tangential component are multi-valued and merged to the element-wise div bubble function $\mathbb B_k(\div, T)$.

\subsection{A nodal basis for the BDM element}
For the efficient implementation, we employ the DoFs of the function values at interpolation nodes and combine with $t-n$ decompositions to present a nodal basis for the BDM element.  
 
Given an $f\in \Delta_{\ell}(T)$, we choose $\{\bs n_F, F\in \Delta_{n-1}( T), f\subseteq F\}$ as the basis for its normal plane $\mathcal N^f$ and an arbitrary basis $\{ \bs t_i^f, i=1,\ldots, \ell \}$ for the tangential plane. We shall choose  $\{\hat{\bs n}_{F_i}\in \mathcal N^f, i\in f^*\}$ a basis of $\mathcal N^f$  dual to $\{\bs n_{F_i}\in \mathcal N^f, i\in f^*\}$, i.e.,
$$
(\bs n_F, \hat{\bs n}_{F'}) = \delta_{F, F'}, \quad F, F'\in \Delta_{n-1}(T), f\in F\cap F'.
$$
Similarly  choose a basis $\{\hat{\bs t}_i^f\}$ dual to $\{\bs t_j^f\}$.  

We can always choose an orthonormal basis for the tangential plane $\mathscr T^f$ but for the normal plane $\mathscr N^f$ with basis $\{\bs n_{F_i}\in \mathcal N^f, i\in f^*\}$, we use Lemma \ref{lm:dualnormal} to find its dual basis. 

For $f\in \Delta_{\ell}(T)$ and $e\in\partial f$, let $\bs n_{f}^e$ be an unit normal vector of $e$ but tangential to $f$. When $\ell=1$,$f$ is an edge and $e$ is a vertex. Then $\bs n_{f}^e$ is the edge vector of $f$. 
%When $\ell=n$, choose $\bs n_{f}^e$ as the normal vector $\bs n$ of $e$.
Using these notations we can give an explicit expression of the dual basis $\{\hat{\bs n}_{F_i}\in \mathcal N^f, i\in f^*\}$.

\begin{lemma}\label{lm:dualnormal}
For $f\in \Delta_{\ell}(T)$, 
\begin{equation}\label{eq:dualnF}
\{\hat{\bs n}_{F_i}=\frac{1}{\bs n_{f+i}^f\cdot\bs n_{F_i}}\bs n_{f+i}^f \quad i\in f^*\},
\end{equation}
where $f+i$ denotes the $(\ell+1)$-dimensional face in $\Delta_{\ell+1}(T)$ with vertices $f(0),\dots, f(\ell)$ and $i$ for $i\in f^*$, is a basis of $\mathscr N^f$ dual to $\{\bs n_{F_i}\in \mathcal N^f, i\in f^* \}$.
\end{lemma}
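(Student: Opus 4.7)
The plan is to verify the duality $\hat{\bs n}_{F_i}\cdot \bs n_{F_j}=\delta_{ij}$ for $i,j\in f^*$ directly from the definition, using a simple containment argument between the sub-simplices $f+i$ and $F_j$. First I would observe that both families of vectors sit in $\mathscr N^f$: for $i\in f^*$ we have $f\subseteq F_i$, so $\bs n_{F_i}\perp f$, placing $\bs n_{F_i}\in \mathscr N^f$; the vector $\bs n_{f+i}^f$ is normal to $f$ by construction, so also lies in $\mathscr N^f$. Hence the inner products to be computed are well-posed.

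The main step is the off-diagonal vanishing. For $i,j\in f^*$ with $i\neq j$, the vertex set of $f+i$ is $f\cup\{i\}$, and $j\notin f\cup\{i\}$, so $f+i\subseteq F_j$. Consequently any vector tangential to $f+i$ is tangential to $F_j$, in particular
\[
\bs n_{f+i}^f \cdot \bs n_{F_j}=0 \qquad (i\neq j, \ i,j\in f^*).
\]
This is the geometric heart of the proof and follows solely from the set identity $f+i\subseteq F_j$.

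Next I would handle non-degeneracy of the normalization, i.e., $\bs n_{f+i}^f\cdot \bs n_{F_i}\neq 0$. If this inner product vanished, then $\bs n_{f+i}^f$ would be tangential to $F_i$. Combined with being tangential to $f+i$, the vector would be tangential to $(f+i)\cap F_i$. But the intersection of these two simplices (whose vertex sets intersect in $f$) is exactly $f$, so $\bs n_{f+i}^f$ would be tangential to $f$, contradicting the fact that it is a nonzero vector normal to $f$. Hence the denominator in \eqref{eq:dualnF} is nonzero, and the formula is meaningful.

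Combining these pieces, for $j=i$ the normalization gives $\hat{\bs n}_{F_i}\cdot \bs n_{F_i}=1$, while for $j\neq i$ the off-diagonal vanishing yields $\hat{\bs n}_{F_i}\cdot \bs n_{F_j}=0$, establishing the duality. Linear independence of $\{\bs n_{F_i},\ i\in f^*\}$ in $\mathscr N^f$ then follows for free from the existence of the dual family, and since $|f^*|=n-\ell=\dim\mathscr N^f$, these vectors form a basis and $\{\hat{\bs n}_{F_i}\}$ is the claimed dual basis. The only step that requires care is the containment $f+i\subseteq F_j$ for $i\neq j$; once that is noted, the rest is bookkeeping.
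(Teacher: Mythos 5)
Your proof is correct and follows essentially the same route as the paper's: both rest on the observation that $\bs n_{f+i}^f\in\mathscr N^f$ and that the off-diagonal products vanish because $\bs n_{f+i}^f\in\mathscr T^{f+i}$ with $f+i\subseteq F_j$ for $i\neq j$. Your additional verification that the normalizing factor $\bs n_{f+i}^f\cdot\bs n_{F_i}$ is nonzero is a worthwhile detail the paper leaves implicit, but it does not change the argument.
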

\begin{proof}
Clearly $\bs n_{f+i}^f\in \mathscr N^f$ for $i\in f^*$. It suffices to prove
\begin{equation*}
 \bs n_{f+i}^f\cdot\bs n_{F_j}=0 \quad \textrm{ for } i,j\in f^*, i\neq j,
\end{equation*}
which follows from $\bs n_{f+i}^f\in\mathscr T^{f+i}$ and $f+i\subseteq F_j$.
\end{proof}

%For an interpolation point $\bs x\in f\in \Delta_{\ell}(T)$, we collect the above $t-n$ basis together and write as  
By Corollary \ref{cor:basisfunctions}, we obtain a nodal basis for BDM face elements.
% For $\bs x_{\alpha} \in \mathcal X_{\mathring{f}}, f \in \Delta_{\ell}(T)$, by Corollary \ref{lem:basisfunctions}, the basis related to $\bs x_{\alpha}$ are
% $$
% \phi_{\bs\alpha}(\bs x)\bs t_i^f, i=1,\ldots, \ell; \quad
% \frac{1}{\bs n_{f+i}^f\cdot\bs n_{F_i}}\phi_{\bs\alpha}(\bs x)\bs n_{f+i}^f,  i\in f^*. 
% $$

\begin{theorem}
For each $f\in \Delta_{\ell}(T)$, we choose $\{\bs e_i^{f}, i=0,\ldots, n-1\} = \{ \bs t_i^f, i=1,\ldots, \ell, \; \; \bs n_{F_i}, i\in f^*\}$ and its dual basis $\{\hat{\bs e}_i^{f}, i=0,\ldots, n-1\} = \{ \hat{\bs t}_i^f, i=1,\ldots, \ell, \; \; \hat{\bs n}_{F_i}, i\in f^*\}$.
A basis function of the $k$-th order BDM element space on $T$ is:
$$
\{\phi_{\bs\alpha}(\bs x)\hat{\bs e}^f_{i},  i=0,1,\ldots, n-1, \; \;  \alpha \in \mathbb T_k^{\ell}(\mathring{f})\}_{f \in \Delta_{\ell}(T), \ell =0,1,\ldots, n},
$$
with the DoFs at the interpolation points defined as:
\begin{align}\label{BDMnodalDoFs}
\{\boldsymbol{u}(\boldsymbol{x}_{\alpha}) \cdot \bs e^f_i,  i=0,1,\ldots, n-1, \; \;  \boldsymbol x_{\alpha} \in \mathcal X_{\mathring{f}} \}_{f \in \Delta_{\ell}(T), \ell =0,1,\ldots, n}.
\end{align}
\end{theorem}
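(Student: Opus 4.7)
The plan is to reduce the statement directly to Corollary~\ref{cor:basisfunctions} together with the geometric decomposition of Theorem~\ref{thm:BDMgeodecomp}. At each interpolation point $\bs x_{\bs\alpha}\in \mathcal X_{\mathring f}$, I would assign the single local frame $\{\bs e_i^f\}=\{\bs t_1^f,\dots,\bs t_\ell^f,\bs n_{F_i}\,(i\in f^*)\}$ that depends only on $f$ and not on the particular node $\bs x_{\bs\alpha}\in\mathring f$. Its dual frame $\{\hat{\bs e}_i^f\}$ exists because Lemma~\ref{lm:dualnormal} furnishes an explicit dual to the normal part via $\hat{\bs n}_{F_i}=(\bs n_{f+i}^f\cdot\bs n_{F_i})^{-1}\bs n_{f+i}^f$, while the tangential part can be taken orthonormal. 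Then Corollary~\ref{cor:basisfunctions} applied node by node yields the duality
\[
N_{\bs\beta}^j(\phi_{\bs\alpha}\hat{\bs e}_i^f)=\phi_{\bs\alpha}(\bs x_{\bs\beta})\,\hat{\bs e}_i^f\cdot\bs e_j^{f'}=\delta_{ij}\,\delta_{\bs\alpha,\bs\beta},
\]
where $f'$ is the host sub-simplex of $\bs x_{\bs\beta}$ (so that $f'=f$ whenever $\phi_{\bs\alpha}(\bs x_{\bs\beta})\neq 0$).

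Next I would verify the cardinality. Using the partition $\mathbb T^n_k(T)=\bigoplus_{\ell=0}^n\bigoplus_{f\in\Delta_\ell(T)}\mathbb T^\ell_k(\mathring f)$ (the algebraic counterpart of the geometric partition of $\mathcal X_T$), the proposed family contains exactly $n\,|\mathcal X_T|=n\binom{n+k}{n}=\dim\mathbb P^n_k(T)$ functions. Combined with the duality above, this forces linear independence of both the basis functions and the DoFs, so the family is a basis of $\mathbb P^n_k(T)$ and the DoFs \eqref{BDMnodalDoFs} are unisolvent.

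Finally, to justify calling this the BDM basis, I would observe that the grouping by frame index matches the geometric decomposition \eqref{eq:divpolynomialdecomp} of Theorem~\ref{thm:BDMgeodecomp}: for each $f\in\Delta_\ell(T)$ with $1\le\ell\le n$, the tangential block $\{\phi_{\bs\alpha}\hat{\bs t}_i^f\}$ spans $\mathbb B_k^\ell(f)$, and for $0\le\ell\le n-1$ the normal block $\{\phi_{\bs\alpha}\hat{\bs n}_{F_i}\}$ spans $\mathbb B_k(f)\otimes\mathscr N^f$. Because each $\bs n_{F_i}$ is a \emph{global} choice depending only on $F_i$, the corresponding DoFs coincide on the two elements sharing $F_i$, so assembling these local bases produces an $H(\div)$-conforming space.

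The proof presents no real obstacle because the heavy lifting lives in the already-proved Corollary~\ref{cor:basisfunctions} and Lemma~\ref{lm:dualnormal}. The only subtle point I will highlight is that the frame $\{\bs e_i^f\}$ must be assigned per sub-simplex $f$ rather than per node $\bs x_{\bs\alpha}$: this uniformity is exactly what makes the dual basis vector $\hat{\bs e}_i^f$ well-defined on the whole of $\mathcal X_{\mathring f}$ and what ensures the node values $\bs u(\bs x_{\bs\alpha})\cdot\bs n_{F_i}$ glue across neighbouring elements into $H(\div)$-conforming DoFs.
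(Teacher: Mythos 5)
Your proposal is correct and follows essentially the same route as the paper: the paper itself derives this theorem as a direct consequence of Corollary~\ref{cor:basisfunctions} with the sub-simplex-wise $t$-$n$ frame, whose dual is supplied by Lemma~\ref{lm:dualnormal}, and the cardinality/duality argument you spell out is exactly the content of that corollary. Your added remarks on the match with the decomposition \eqref{eq:divpolynomialdecomp} and on the global choice of $\bs n_F$ yielding $H(\div)$-conformity reproduce the discussion the paper gives in the text surrounding the theorem.
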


% For an interpolation points $\bs x\in \mathcal X_{T}$, we collect the $t-n$ basis together and write as $\{\bs e_i^{f}, i=0,\ldots n-1\}$ and its dual basis is $\{\hat{\bs e}_i^{f}, i=0,\ldots n-1\}$. When $\{\bs e_i^{f}, i=0,\ldots n-1\}$ is orthonormal, its dual basis is itself. In general, we can always choose an orthonormal basis for the tangential plane $\mathscr T^f$ but for the normal plane $\mathscr N^f$, we can use Lemma \ref{lm:dualnormal} to find the dual basis. 

% \begin{lemma}
%     A polynomial function $\bs u \in \mathbb P_k^n(T)$ can be uniquely determined by the DoFs:
%   $$
% N^i_{\bs \alpha }(\bs u):=  \bs u(\bs x_{\bs \alpha}) \cdot \bs e^{\bs x_{\bs \alpha}}_i, \quad \bs x_{\bs \alpha} \in \mathcal X_{T}, i = 0,\ldots, n-1.
%   $$
%   The basis function of $k$th order BDM element space on $T$ can be 
%   explicitly written as:
%   $$
%   \bs{\phi}_{\bs \alpha}^i(\bs x) = 
%   \phi_{\bs\alpha}(\bs x) \hat{\bs e}^{\bs x_{\bs \alpha}}_i, \quad 
%   \boldsymbol \alpha \in \mathbb T^n_k, i = 0, \ldots, n-1.
%   $$
% \end{lemma}
%  \begin{proof}
% It is straightforward to verify the duality
% $$
% N^j_{\bs \beta }(\bs{\phi}_{\bs \alpha}^i) =   \bs{\phi}_{\bs \alpha}^i(\bs x_{\beta}) \cdot \bs e^{\bs x_{\beta}}_j = \delta_{i}^j \delta_{\bs \alpha}^{\bs \beta}, \quad \quad \bs \alpha, \bs \beta\in  \mathbb T^n_k, i,j = 0,\ldots, n-1. 
% $$ 
% \end{proof}

By choosing a global normal basis in the sense that $\bs n_F$ depending only on $F$ not the element containing $F$, we impose the continuity on the normal direction. We choose a local $\bs t^f$, i.e., for different element $T$ containing $f$, $\phi_{\bs\alpha}(\bs x)\bs t^f(T)$
%\mnote{ $\bs t^f(T)$ is always single-valued. So change to $\phi_{\bs\alpha}(\bs x)\bs t^f(T)$} 
is different, then no continuity is imposed for the tangential direction. 

Define the global finite element space
\begin{align*}
V_{h,k}^{\div}:=&\{\bs u\in L^2(\Omega;\mathbb R^n): \bs u|_T\in\mathbb P_k(T; \mathbb R^n) \textrm{ for each } T\in\mathcal T_h, \\
 &\textrm{ all the DoFs } \boldsymbol{u}(\boldsymbol{x}_{\alpha}) \cdot \bs n_F \text{ in \eqref{BDMnodalDoFs} across $F\in\Delta_{n-1}(\mathcal T_h)$ are single-valued}\}.
\end{align*}
By Theorem~\ref{thm:BDMgeodecomp}, $V_{h,k}^{\div}\subset H(\div,\Omega)$ and is equivalent to the BDM space.

The novelty is that we only need a basis of Lagrange element which is well documented; see Section \ref{sec:lagrangebasis}. Coupling with different $t-n$ decomposition at different sub-simplex, we obtain the classical face elements. This concept has been explored in the works of \cite{DeSiqueiraDevlooGomes2010, DeSiqueiraDevlooGomes2013} and \cite{CastroDevlooFariasGomesEtAl2016}, focusing on the implementation of the $H(\div)$ element in two and three dimensions. Additionally, the adaptation of a 2D $H(\curl)$ element through rotation is discussed in \cite{DeSiqueiraDevlooGomes2010, DeSiqueiraDevlooGomes2013}. As we will demonstrate in the subsequent section, extending the edge element to higher dimensions presents significant challenges. This extension necessitates a comprehensive characterization of the $\curl$ operator and its associated polynomial bubble space.

\section{Geometric Decompositions of Edge Elements}\label{sec:edgefem}
In this section we present geometric decompositions of $H(\curl)$-conforming finite element space on an $n$-dimensional simplex. We first generalize the $\curl$ differential operator to $n$ dimensions and study its trace which motivates our decomposition. We then give an explicit basis dual to function values at interpolation points.

\subsection{Differential operator and its trace}
Denote by $\mathbb S$ and $\mathbb K$ the subspace of symmetric matrices and skew-symmetric matrices of $\mathbb R^{n\times n}$, respectively. For a smooth vector function $\boldsymbol{v}$, define
$$
\curl\boldsymbol{v}:=2\skw(\grad\boldsymbol{v}) = \grad\boldsymbol{v} - (\grad\boldsymbol{v})^{\intercal},
$$
which is a skew-symmetric matrix function. In two dimensions, for $\boldsymbol{v}=(v_1, v_2)^{\intercal}$,
$$
\curl\boldsymbol{v}=\begin{pmatrix}
0 & \partial_{x_2}v_1-\partial_{x_1}v_2 \\
\partial_{x_1}v_2-\partial_{x_2}v_1 & 0
\end{pmatrix}=\mskw(\rot\boldsymbol{v}),
$$
where
$
\mskw u:=\begin{pmatrix}
0 & -u \\
u & 0
\end{pmatrix}$ and $\rot\boldsymbol{v}:=\partial_{x_1}v_2-\partial_{x_2}v_1$.
In three dimensions, for $\boldsymbol{v}=(v_1, v_2, v_3)^{\intercal}$,
$$
\curl\boldsymbol{v}=\begin{pmatrix}
0 & \partial_{x_2}v_1-\partial_{x_1}v_2 & \partial_{x_3}v_1-\partial_{x_1}v_3 \\
\partial_{x_1}v_2-\partial_{x_2}v_1 & 0 & \partial_{x_3}v_2-\partial_{x_2}v_3 \\
\partial_{x_1}v_3-\partial_{x_3}v_1 & \partial_{x_2}v_3-\partial_{x_3}v_2 & 0 
\end{pmatrix}=\mskw(\nabla\times\boldsymbol{v}),
$$
where
$
\mskw \boldsymbol{u}:=\begin{pmatrix}
 0 & -u_3 & u_2 \\
u_3 & 0 & - u_1\\
-u_2 & u_1 & 0
\end{pmatrix}$ with $\boldsymbol{u}=(u_1, u_2, u_3)^{\intercal}$.
Hence we can identify $\curl\boldsymbol{v}$ as scalar $\rot\boldsymbol{v}$ in two dimensions, and vector $\nabla\times\boldsymbol{v}$ in three dimensions. In general, $\curl \bs u$ is understood as a skew-symmetric matrix.

Define Sobolev space
\begin{equation*}
H(\curl,\Omega):=\{\boldsymbol{v}\in L^2(\Omega;\mathbb R^n): \curl\boldsymbol{v}\in L^2(\Omega;\mathbb K)\}.
\end{equation*}
Given a face $F\in \Delta_{n-1}(T)$, define the trace operator of $\curl$ as
$$
{\rm tr}_F^{\curl}\boldsymbol v=2\skw(\boldsymbol{v}\boldsymbol{n}_F^{\intercal})|_F = (\boldsymbol{v}\boldsymbol{n}_F^{\intercal} - \boldsymbol{n}_F\boldsymbol{v}^{\intercal})|_F.
$$
Recall that for two column vectors $\boldsymbol a$ and $\boldsymbol b$, $\boldsymbol a \boldsymbol b^{T}$ is a matrix.
We define $\tr^{\curl}$ as a piecewise defined operator as $$(\tr^{\curl}\bs v) |_F = \tr^{\curl}_F\bs v, \quad F\in \Delta_{n-1}(T).$$

For a vector $\bs v\in \mathbb R^n$ and an $(n-1)$-dimensional face $F$, the tangential part of $\boldsymbol v$ on $F$ is
$$
\Pi_F\boldsymbol v := \boldsymbol v|_F - (\boldsymbol v|_F\cdot\boldsymbol n_F)\boldsymbol n_F = \sum_{i=1}^{n-1}(\boldsymbol v|_F\cdot\boldsymbol t_{i}^F)\boldsymbol t_{i}^F,
$$ 
where $\{ \bs t_{i}^F, i=1,\ldots, n-1 \}$ is an orthonormal basis of $F$. 
As we treat $\curl \bs v$ as a matrix, so is the trace ${\rm tr}_F^{\curl}\boldsymbol v$, while the tangential component of $\bs v$ is a vector. Their relation is given in the following lemma.

\begin{lemma}
For face $F\in \Delta_{n-1}(T)$, 
we have 
\begin{equation}\label{eq:tangentialtracetangentialpart}
{\rm tr}_F^{\curl}\boldsymbol v = 2\skw\big((\Pi_F\boldsymbol v)\boldsymbol{n}_F^{\intercal}\big), \quad \Pi_F\boldsymbol v = ({\rm tr}_F^{\curl}\boldsymbol v)\boldsymbol{n}_F.
\end{equation}
\end{lemma}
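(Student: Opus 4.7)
The plan is to reduce both identities to the orthogonal decomposition $\boldsymbol v|_F = \Pi_F \boldsymbol v + (\boldsymbol v|_F \cdot \boldsymbol n_F)\boldsymbol n_F$ together with the defining formula ${\rm tr}_F^{\curl}\boldsymbol v = \boldsymbol v \boldsymbol n_F^\intercal - \boldsymbol n_F \boldsymbol v^\intercal$ given just above. Both statements are linear in $\boldsymbol v$, so checking them on the tangential and normal components separately will suffice.

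For the first identity ${\rm tr}_F^{\curl}\boldsymbol v = 2\skw\bigl((\Pi_F\boldsymbol v)\boldsymbol n_F^\intercal\bigr)$, I would substitute the decomposition into $\boldsymbol v \boldsymbol n_F^\intercal$ to get
\[
\boldsymbol v \boldsymbol n_F^\intercal = (\Pi_F \boldsymbol v)\boldsymbol n_F^\intercal + (\boldsymbol v \cdot \boldsymbol n_F)\,\boldsymbol n_F \boldsymbol n_F^\intercal.
\]
The second term is a scalar multiple of the symmetric rank-one matrix $\boldsymbol n_F \boldsymbol n_F^\intercal$, hence lies in $\mathbb S$ and is annihilated by $\skw$. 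Applying $2\skw$ to both sides then yields the claim.

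For the second identity $\Pi_F\boldsymbol v = ({\rm tr}_F^{\curl}\boldsymbol v)\boldsymbol n_F$, I would simply right-multiply the definition of ${\rm tr}_F^{\curl}\boldsymbol v$ by $\boldsymbol n_F$ and use $\boldsymbol n_F^\intercal \boldsymbol n_F = |\boldsymbol n_F|^2 = 1$:
\[
({\rm tr}_F^{\curl}\boldsymbol v)\boldsymbol n_F = \boldsymbol v(\boldsymbol n_F^\intercal \boldsymbol n_F) - \boldsymbol n_F(\boldsymbol v^\intercal \boldsymbol n_F) = \boldsymbol v - (\boldsymbol v \cdot \boldsymbol n_F)\boldsymbol n_F = \Pi_F \boldsymbol v.
\]

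There is no real obstacle here; the lemma is essentially a bookkeeping identity that records how the matrix-valued trace of $\curl$ stores the same information as the vector-valued tangential component on $F$. The only thing to be slightly careful about is the implicit restriction to $F$ on the right-hand sides and the fact that $\boldsymbol n_F$ is a unit vector, which is built into the convention fixed earlier in the section.
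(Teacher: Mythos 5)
Your proof is correct and follows essentially the same route as the paper: substitute the tangential--normal decomposition into the definition of ${\rm tr}_F^{\curl}$, note that the normal part contributes a symmetric matrix annihilated by $\skw$, and right-multiply by $\boldsymbol n_F$ using $\boldsymbol n_F^{\intercal}\boldsymbol n_F=1$. The only cosmetic difference is that for the second identity you multiply the original definition by $\boldsymbol n_F$ directly, whereas the paper first rewrites the trace via the already-proved first identity and uses $(\Pi_F\boldsymbol v)^{\intercal}\boldsymbol n_F=0$; both are equally valid.
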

\begin{proof}
By the decomposition $\boldsymbol v|_F =\Pi_F\boldsymbol v + (\boldsymbol v|_F\cdot\boldsymbol n_F)\boldsymbol n_F$,
$$
{\rm tr}_F^{\curl}\boldsymbol v=2\skw\big((\Pi_F\boldsymbol v)\boldsymbol{n}_F^{\intercal} + (\boldsymbol v|_F\cdot\boldsymbol n_F)\boldsymbol n_F\boldsymbol{n}_F^{\intercal}\big) = 2\skw\big((\Pi_F\boldsymbol v)\boldsymbol{n}_F^{\intercal}\big),
$$
which implies the first identity.
Then by $\boldsymbol{n}_F^{\intercal}\boldsymbol{n}_F=1$ and $(\Pi_F\boldsymbol v)^{\intercal}\boldsymbol{n}_F=0$,
$$
({\rm tr}_F^{\curl}\boldsymbol v)\boldsymbol{n}_F=\big((\Pi_F\boldsymbol v)\boldsymbol{n}_F^{\intercal} - \boldsymbol{n}_F(\Pi_F\boldsymbol v)^{\intercal}\big)\boldsymbol{n}_F=\Pi_F\boldsymbol v,
$$
i.e. the second identity holds.
\end{proof}
Thanks to \eqref{eq:tangentialtracetangentialpart}, the vanishing tangential part $\Pi_F\boldsymbol v$ and the vanishing tangential trace $({\rm tr}_F^{\curl}\boldsymbol v)$ are equivalent.

\begin{lemma}
Let $\bs v \in L^2(\Omega;\mathbb R^n)$ and $\bs v|_T\in H^{1}(T;\mathbb R^n)$ for each $T\in \mathcal T_h$. Then $\bs v \in H(\curl,\Omega)$ if and only if $\,\Pi_F\boldsymbol v\mid_{T_1} = \Pi_F\boldsymbol v\mid_{T_2}\,$ for all interior face $F\in \Delta_{n-1}(\mathcal T_h)$, where $T_1$ and $T_2$ are two elements sharing $F$.
\end{lemma}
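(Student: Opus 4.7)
The plan is to characterize $H(\curl,\Omega)$ membership by testing $\bs v$ against compactly supported skew-symmetric matrix fields $\bs\Phi\in C_c^\infty(\Omega;\mathbb K)$ and comparing the distributional identity with the one obtained by elementwise integration by parts. Introduce the piecewise curl $\bs w\in L^2(\Omega;\mathbb K)$ defined by $\bs w|_T=\curl(\bs v|_T)$ for each $T\in\mathcal T_h$, which is well defined because $\bs v|_T\in H^1(T;\mathbb R^n)$. Since $\bs v\in H(\curl,\Omega)$ precisely when the distributional curl of $\bs v$ lies in $L^2(\Omega;\mathbb K)$, and since testing against $\bs\Phi$ supported inside a single element already forces this candidate to agree with $\bs w$ there, the task reduces to determining when the piecewise $\bs w$ coincides with the distributional curl globally.

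First I would perform elementwise integration by parts. The skew-symmetry $\bs\Phi^\intercal=-\bs\Phi$ yields $\curl\bs v:\bs\Phi=2\,\grad\bs v:\bs\Phi$, and the row-wise divergence theorem then gives
\begin{equation*}
\int_T\curl\bs v:\bs\Phi\dd x=-2\int_T\bs v\cdot\div\bs\Phi\dd x+2\int_{\partial T}\bs v\cdot(\bs\Phi\bs n_T)\dd s.
\end{equation*}
Summing over $T\in\mathcal T_h$ and using that $\bs\Phi$ vanishes near $\partial\Omega$, the contributions along boundary faces drop out and the remaining sum collapses to one over interior faces $F\in\Delta_{n-1}(\mathcal T_h)$:
\begin{equation*}
\int_\Omega\bs w:\bs\Phi\dd x+2\int_\Omega\bs v\cdot\div\bs\Phi\dd x=2\sum_{F\text{ interior}}\int_F(\bs v|_{T_1}-\bs v|_{T_2})\cdot(\bs\Phi\bs n_F)\dd s,
\end{equation*}
with $T_1, T_2$ the two elements sharing $F$ and $\bs n_F$ a fixed orientation. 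Comparing with the definition of the distributional curl, $\bs v\in H(\curl,\Omega)$ with $\curl\bs v=\bs w$ if and only if the right-hand side vanishes for every admissible $\bs\Phi$.

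Next I localize: choosing $\bs\Phi$ supported in a small neighborhood of a single interior face $F$ isolates one summand, so the condition reduces to $\int_F(\bs v|_{T_1}-\bs v|_{T_2})\cdot(\bs\Phi\bs n_F)\dd s=0$ for all skew $\bs\Phi$ compactly supported near $F$. The key algebraic observation is that the map $\bs\Phi\mapsto\bs\Phi\bs n_F$ from $\mathbb K$ to $\mathbb R^n$ has image exactly the tangent plane $\bs n_F^{\perp}$: skew-symmetry forces $\bs n_F^\intercal\bs\Phi\bs n_F=0$, while for any tangent vector $\bs t\perp\bs n_F$ the skew matrix $\bs t\bs n_F^\intercal-\bs n_F\bs t^\intercal$ sends $\bs n_F$ to $\bs t$. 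Combined with the freedom to choose an arbitrary scalar cutoff, the vanishing of the face integral for every $\bs\Phi$ is equivalent to $\Pi_F(\bs v|_{T_1}-\bs v|_{T_2})=0$, i.e., $\Pi_F\bs v|_{T_1}=\Pi_F\bs v|_{T_2}$, which is exactly the claim.

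The main obstacle is not conceptual but bookkeeping: keeping the matrix/vector conventions consistent (the factor of $2$ stemming from $\curl\bs v=\grad\bs v-(\grad\bs v)^\intercal$ when paired with skew fields, the row-wise action of $\div$ on matrices, and the sign flip caused by opposite outward normals on a shared face). Once those are aligned, the proof hinges on the elementary linear-algebra fact that $\bs\Phi\bs n_F$ exhausts the tangent plane; this, together with the equivalence $\Pi_F\bs v=0\iff\tr_F^{\curl}\bs v=0$ already recorded in the preceding lemma, closes the argument cleanly in both directions.
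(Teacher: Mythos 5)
Your argument is correct, but it takes a genuinely different route from the paper. The paper disposes of this lemma in one line: it invokes Lemma 5.1 of Arnold--Falk--Winther (2006) --- the general statement that a piecewise smooth differential form belongs to $H\Lambda^k$ iff its traces agree across interior faces --- and then converts "equal traces $\tr_F^{\curl}\bs v$" into "equal tangential parts $\Pi_F\bs v$" via the identity $\Pi_F\boldsymbol v = ({\rm tr}_F^{\curl}\boldsymbol v)\boldsymbol{n}_F$ from the preceding lemma. You instead prove the statement from scratch: elementwise integration by parts against compactly supported skew test fields $\bs\Phi$, cancellation of the volume terms against the distributional curl, localization to a single interior face, and the linear-algebra observation that $\bs\Phi\mapsto\bs\Phi\bs n_F$ maps $\mathbb K$ onto the tangent plane of $F$ (via $\bs t\bs n_F^{\intercal}-\bs n_F\bs t^{\intercal}$). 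Your bookkeeping is right --- the factor $2$ from pairing $\grad\bs v-(\grad\bs v)^{\intercal}$ with skew $\bs\Phi$, the row-wise divergence, and the sign flip from opposite outward normals all check out, and the remaining steps (trace theorem for $H^1(T)$, fundamental lemma of the calculus of variations on $F$) are standard. What your approach buys is self-containedness: it avoids the exterior-calculus machinery entirely and exposes exactly why only the tangential component of the jump is constrained. What the paper's approach buys is brevity and uniformity across the whole $H(\div)$/$H(\curl)$ family, at the cost of an external citation.
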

\begin{proof}
It is an immediate result of  Lemma 5.1 in~\cite{ArnoldFalkWinther2006} and \eqref{eq:tangentialtracetangentialpart}.
\end{proof}

\subsection{Polynomial bubble space}
Define the polynomial bubble space of degree $k$ for the $\curl$ operator as
$$
\mathbb B_k(\curl, T) = \ker(\tr^{\curl})\cap \mathbb P_k^n(T).
$$
For Lagrange bubble space $\mathbb B_k^n(T)$, all components of the vector function vanish on $\partial T$ and consequently on all sub-simplex with dimension $\leq n-1$. For $\bs u \in \mathbb B_k(\curl, T)$, only the tangential component vanishes, which will imply $\bs u$ vanishes on sub-simplex with dimension $\leq n-2$. 

\begin{lemma}\label{lm:curlbubbleface}
For $\bs u\in \mathbb B_k(\curl, T)$, it holds $\bs u|_f=\bs0$ for all $f\in\Delta_{\ell}(T), 0\leq \ell\leq n-2$. Consequently $\mathbb B_k(\curl, T)\subset \mathbb B_k^n(T) \oplus\Oplus_{F\in \Delta_{n-1}(T)}  \mathbb B_k^{n}(F)$.
\end{lemma}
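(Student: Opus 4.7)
The plan is to prove the pointwise statement $\bs u|_f = \bs 0$ for every $f\in \Delta_\ell(T)$ with $0 \leq \ell \leq n-2$ first, and then read off the direct-sum inclusion from the geometric decomposition \eqref{eq:Pkvecdec} by noting that the bubble components attached to sub-simplices of codimension $\geq 2$ must vanish.

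For the pointwise step, fix such an $f$. Since $f$ has codimension at least $2$, it is contained in $n-\ell \geq 2$ distinct $(n-1)$-faces $F_i \in \Delta_{n-1}(T)$, indexed by $i\in f^*$. From $\bs u \in \mathbb B_k(\curl, T)$ and the equivalence \eqref{eq:tangentialtracetangentialpart}, $\Pi_{F_i}\bs u|_{F_i} = \bs 0$ for every such $i$, so that $\bs u|_{F_i}$ takes values in $\textrm{span}\{\bs n_{F_i}\}$. Restricting to $f \subseteq F_i$ and intersecting over $i \in f^*$,
$$
\bs u(\bs x) \in \bigcap_{i\in f^*}\textrm{span}\{\bs n_{F_i}\} = \{\bs 0\}, \quad \bs x \in f,
$$
where the intersection collapses to $\{\bs 0\}$ because any two outward face normals of the non-degenerate simplex $T$ are linearly independent.

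For the inclusion, use the Lagrange nodal representation of $\bs u$ on $T$ from Corollary~\ref{cor:basisfunctions}, whose DoFs are the values $\bs u(\bs x_\alpha) \cdot \bs e_i^{\bs x_\alpha}$ over $\bs x_\alpha \in \mathcal X_T = \Delta_0(T) \oplus \Oplus_{\ell=1}^n \Oplus_{f\in\Delta_\ell(T)} \mathcal X_{\mathring f}$. The pointwise step forces $\bs u(\bs x_\alpha) = \bs 0$ at every node $\bs x_\alpha$ interior to a sub-simplex of dimension $\leq n-2$, so only DoFs at nodes in the interior of some $F\in\Delta_{n-1}(T)$ or of $T$ itself can be non-zero. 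Reassembling the surviving components via \eqref{eq:Pkvecdec} yields
$$
\bs u \in \mathbb B_k^n(T) \oplus \Oplus_{F\in\Delta_{n-1}(T)} \mathbb B_k^n(F).
$$

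The only real obstacle is the pointwise step: one must guarantee that vanishing tangential components on every face containing $f$ forces $\bs u$ itself (not merely its tangential part) to vanish on $f$, and this only works once $\ell \leq n-2$ because at least two linearly independent face normals are needed for the intersection of the one-dimensional spans to collapse. The passage from pointwise vanishing to the direct-sum inclusion is then routine book-keeping with the already-established geometric decomposition.
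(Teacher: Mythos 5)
Your proof is correct and follows essentially the same route as the paper's: establish pointwise vanishing of $\bs u$ on codimension-$\geq 2$ sub-simplices from the vanishing tangential traces on the faces containing them, then read off the inclusion from the geometric decomposition. The only cosmetic differences are that you phrase the linear-algebra step as $\bigcap_i\operatorname{span}\{\bs n_{F_i}\}=\{\bs 0\}$ where the paper instead notes that $\bs u$ is orthogonal to the spanning set $\{\bs t_1^f,\ldots,\bs t_{n-2}^f,\bs n_{F_1}^f,\bs n_{F_2}^f\}$ of $\mathbb R^n$, and that you make explicit via the nodal DoFs the bookkeeping the paper compresses into ``by the property of face bubbles.''
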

\begin{proof}
It suffices to consider a sub-simplex $f\in \Delta_{n-2}(T)$. Let $F_1, F_2\in\Delta_{n-1}(T)$ such that $f=F_1\cap F_2$. By $\tr_{F_i}^{\curl}\bs u=\bs0$ for $i=1,2$, we have $\Pi_{F_i}\bs u = 0$ and consequently
$$
(\bs u\cdot\bs t_i^f)|_f=0, \;\; (\bs u\cdot\bs n_{F_1}^f)|_f=(\bs u\cdot\bs n_{F_2}^f)|_f=0\quad \textrm{ for } i=1,\ldots, n-2,
$$
where $\bs n_{F_i}^f$ is a normal vector $f$ sitting on $F_i$. As $\spa \{\bs t_1^f, \ldots, \bs t_{n-2}^f, \bs n_{F_1}^f, \bs n_{F_2}^f \} = \mathbb R^n$, we acquire $\bs u|_f=\bs0$. By the property of face bubbles, we conclude $\bs u$ is a linear combination of element bubble and $(n-1)$-dimensional face bubbles. 
\end{proof}

Obviously $\mathbb B_k^n(T)\subset \mathbb B_k(\curl, T)$. As $\tr^{\curl}$ contains the tangential component only, the normal component $\mathbb B_k(F)\bs n_F$ is also a $\curl$ bubble. The following result says their sum is precisely all $\curl$ bubble polynomials.  
\begin{theorem}\label{thm:curlbubbletracespacedecomp}
For $k\geq 1$, it holds that
\begin{equation}\label{eq:curlbubbledecomp}
\mathbb B_k(\curl, T) =  \mathbb B_k^n(T) \oplus \Oplus_{F\in \Delta_{n-1}(T)} \mathbb B_k(F)\bs n_F,
\end{equation}
and
\begin{equation}\label{eq:trNrf}
{\rm tr}^{\curl} : \mathbb P_1^n(T)\oplus \Oplus_{\ell = 1}^{n-2}\Oplus_{f\in \Delta_{\ell}(T)} \mathbb B_k^n(f)\oplus \Oplus_{F\in \Delta_{n-1}(T)}  \mathbb B_k^{n-1}(F) \to  {\rm tr}^{\curl} \mathbb P_k^n(T)
\end{equation} 
% \begin{align}\label{eq:trNrf}
% {\rm tr} : &\Oplus_{i = 0}^{n}\lambda_{i}\mathbb P_{k - 1} ({\bs x}_i)\otimes \mathbb R^n \oplus \Oplus_{\ell = 1}^{n-1}\Oplus_{f\in \Delta_{\ell}(T)}  \mathscr{T}^f_k( T) \oplus \Oplus_{\ell = 0}^{n-2}\Oplus_{f\in \Delta_{\ell}(T)}  \mathscr{N}^f_k( T) \\
% &\to  {\rm tr} \mathbb P_k^n(T) \notag
% \end{align}
is a bijection.
\end{theorem}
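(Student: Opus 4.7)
The plan is to establish the bubble decomposition \eqref{eq:curlbubbledecomp} first, and then deduce the bijectivity \eqref{eq:trNrf} by combining it with the vector Lagrange decomposition \eqref{eq:Pkvecdec}. For the inclusion $\supseteq$ in \eqref{eq:curlbubbledecomp}, it suffices to note that $\mathbb{B}_k^n(T)$ vanishes on $\partial T$ and hence lies in $\mathbb{B}_k(\curl,T)$, and that for $u \in \mathbb{B}_k(F)$ the vector $u\bs{n}_F$ has vanishing tangential component on $F$ and vanishes identically on every other face $F'$ since $b_F|_{F'} = 0$; thus $u\bs{n}_F \in \mathbb{B}_k(\curl,T)$. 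Directness of the sum on the right-hand side is inherited from the direct-sum decomposition \eqref{eq:Pkvecdec}, since each $\mathbb{B}_k(F)\bs{n}_F$ sits inside the distinct summand $\mathbb{B}_k^n(F)$ and $\mathbb{B}_k^n(T)$ corresponds to a separate summand.

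For the reverse inclusion, I take $\bs{u} \in \mathbb{B}_k(\curl,T)$ and invoke Lemma~\ref{lm:curlbubbleface} to write $\bs{u} = \bs{u}_T + \sum_{F \in \Delta_{n-1}(T)} \bs{u}_F$ with $\bs{u}_T \in \mathbb{B}_k^n(T)$ and $\bs{u}_F \in \mathbb{B}_k^n(F)$. Fixing a face $F$, the observations $\bs{u}_T|_F = 0$ and $\bs{u}_{F'}|_F = 0$ for every $F' \neq F$ reduce the hypothesis $\tr_F^{\curl}\bs{u} = 0$ to $\Pi_F\bs{u}_F = 0$ on $F$. Expanding $\bs{u}_F = \sum_{i=1}^{n-1} q_i \bs{t}_i^F + r\bs{n}_F$ with $q_i, r \in \mathbb{B}_k(F) = b_F \mathbb{P}_{k-n}(F)$, the tangential vanishing translates to $q_i|_F = 0$ for each $i$; writing $q_i = b_F p_i$ with $p_i \in \mathbb{P}_{k-n}(F)$ and using that $b_F|_F$ is the non-zero bubble of the simplex $F$ viewed on its own, I conclude $p_i = 0$ and hence $\bs{u}_F = r\bs{n}_F \in \mathbb{B}_k(F)\bs{n}_F$.

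To deduce \eqref{eq:trNrf}, I compare the source space $D$ in \eqref{eq:trNrf} with the full decomposition \eqref{eq:Pkvecdec}: $D$ omits the interior bubble summand $\mathbb{B}_k^n(T)$ and, via \eqref{eq:tndec}, retains only the tangential part $\mathbb{B}_k^{n-1}(F) = \mathbb{B}_k(F) \otimes \mathscr{T}^F$ of each face bubble $\mathbb{B}_k^n(F)$, so the complement of $D$ in $\mathbb{P}_k^n(T)$ is precisely $\mathbb{B}_k^n(T) \oplus \bigoplus_{F \in \Delta_{n-1}(T)} \mathbb{B}_k(F)\bs{n}_F$, which by \eqref{eq:curlbubbledecomp} equals $\mathbb{B}_k(\curl,T) = \ker \tr^{\curl}$. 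Therefore $\mathbb{P}_k^n(T) = D \oplus \ker \tr^{\curl}$, so $\tr^{\curl}|_D$ has trivial kernel and its image coincides with $\tr^{\curl}\mathbb{P}_k^n(T)$, giving the bijection.

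The main obstacle, and the only step requiring genuine care, is the tangent/normal argument showing that a face bubble $\bs{u}_F \in \mathbb{B}_k^n(F)$ with vanishing tangential trace on $F$ must lie in $\mathbb{B}_k(F)\bs{n}_F$. The essential enabling fact is that $b_F|_F$ is a non-zero polynomial on $F$ (the standard bubble of the $(n-1)$-simplex $F$), allowing the cancellation that passes from $b_F p_i = 0$ on $F$ to $p_i = 0$. Everything else is bookkeeping on the already-established direct-sum decompositions.
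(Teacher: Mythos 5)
Your proof is correct and uses the same ingredients as the paper's: the easy inclusion $\mathbb B_k^n(T)\oplus\Oplus_F\mathbb B_k(F)\bs n_F\subseteq\mathbb B_k(\curl,T)$, Lemma~\ref{lm:curlbubbleface}, and the face-wise tangential-vanishing computation in the $t$-$n$ frame. The only difference is the order of deduction — you establish \eqref{eq:curlbubbledecomp} directly and then obtain the bijection \eqref{eq:trNrf} from the complement $\mathbb P_k^n(T)=D\oplus\ker\tr^{\curl}$, whereas the paper proves the bijection first (injectivity via the identical face argument) and then reads off \eqref{eq:curlbubbledecomp}; the two are logically equivalent.
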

\begin{proof}
It is obvious that
$$
\mathbb B_k^n(T) \oplus \Oplus_{F\in \Delta_{n-1}(T)} \mathbb B_k(F)\bs n_F \subseteq \mathbb B_{k}(\curl,T).
$$
Then apply the trace operator to the decomposition \eqref{eq:Pkvecdec}
% \begin{align}\label{eq:Pkvecdec}
% \mathbb P_k^n(T)&= \mathbb P_1^n(T)\oplus \Oplus_{\ell = 1}^n\Oplus_{f\in \Delta_{\ell}(T)} \mathbb B_{k}^n(f).
% \end{align}
 to conclude that 
%$$
%{\rm tr}^{\curl} \left( \mathbb P_1(T)\oplus \Oplus_{\ell = 1}^{n-2}\Oplus_{f\in \Delta_{\ell}(T)} \mathbb B_k^n(f)\oplus \Oplus_{F\in \Delta_{n-1}(T)}  \mathscr{T}^F_k( T) \right) =  {\rm tr}^{\curl} \mathbb P_k^n(T).
%$$ 
the map $\tr^{\curl}$ in~\eqref{eq:trNrf} is onto. 

Now we prove it is also injective. Take a function $\bs u\in  \mathbb P_1^n(T)\oplus \Oplus_{\ell = 1}^{n-2}\Oplus_{f\in \Delta_{\ell}(T)} \mathbb B_k^n(f)\oplus \Oplus_{F\in \Delta_{n-1}(T)}  \mathbb B_k^{n-1}(F)$ and $\tr^{\curl} \bs u =\bs 0$. By Lemma~\ref{lm:curlbubbleface}, we can assume $\bs u=\sum\limits_{F\in \Delta_{n-1}(T)}\bs u_k^F$ with $\bs u_k^F \in \mathbb B_k^{n-1}(F)$.
Take $F\in \Delta_{n-1}(T)$. We have $\bs u|_F=\bs u_k^F|_F\in \mathbb B_k^{n-1}(F)$. Hence $(\bs u_k^F\cdot\bs t)|_F=(\bs u\cdot\bs t)|_F=0$ for any $\bs t\in\mathscr T^F$, which results in $\bs u_k^F=\bs0$. Therefore $\bs u = \bs0$.  

% As $\bs u$ contains only normal component, DoF \eqref{eq:dofvectort} will be zero. For each $f\in \Delta_{\ell}(T)$, $\mathscr{N}^f = {\rm span}\{ \bs n_F, f\subset F\}$ and thus $\bs n_F\cdot \bs u = 0$ implies DoF \eqref{eq:dofvectorn} also vanishes. Therefore $\bs u = 0$. 
% Now we prove it is also injective. Take a function $\bs u\in \Oplus_{\ell = 0}^{n-1}\Oplus_{f\in \Delta_{\ell}(T)}  \mathscr{N}^f_k( T)$ and $\tr \bs u = 0$. As $\bs u$ contains only normal component, DoF \eqref{eq:dofvectort} will be zero. For each $f\in \Delta_{\ell}(T)$, $\mathscr{N}^f = {\rm span}\{ \bs n_F, f\subset F\}$ and thus $\bs n_F\cdot \bs u = 0$ implies DoF \eqref{eq:dofvectorn} also vanishes. Therefore $\bs u = 0$. 

Once we have proved the map $\tr$ in \eqref{eq:trNrf} is bijection, we conclude \eqref{eq:curlbubbledecomp} from the decomposition \eqref{eq:Pkvecdec}. 
%$ {\rm tr} \mathbb P_k\Lambda^{n-1}(T) = \sum_{\ell = 0}^{n-1}\sum_{f\in \Delta_{\ell}(T)} {\rm tr}  \left (\mathscr{N}^f_k( T)\right )$.
%\eqref{eq:trNrf}. \mnote{How to prove direct sum?}
% To prove ``=", we only need to count the dimensions. 
%Dimension count 
%$$
%\dim \mathbb B_k(\div, T)
%= \dim \mathbb P_k^n(T) - \dim {\rm tr}^{\div}(\mathbb P_k^n(T)). 
%$$
%We then do the dimension count which is straight forward in view of Fig. \ref{fig:Hdivdec}. 
\end{proof}

%\begin{corollary}
%The $\curl$ bubble space $\mathbb B_k(\curl, T)$ is uniquely determined by 
%\begin{align}
%\label{eq:dofcurlbubbleface}
%\int_F (\bs u\cdot \bs n_{F})\ p \dd s, &\quad p\in \mathbb P_{k - n}(F), F\in\partial T, \\ 
%\label{eq:dofcurlbubbleinterior}
%\int_T \bs u\cdot \bs q \dx, &\quad \bs q\in \mathbb P_{k-(n+1)}^n(T). 
%\end{align}
%\end{corollary}
%\begin{proof}
%Due to the decomposition \eqref{eq:curlbubbledecomp}, the number of \eqref{eq:dofcurlbubbleface} and \eqref{eq:dofcurlbubbleinterior} equals $\dim\mathbb B_k(\curl, T)$. Assume $\bs u\in \mathbb B_k(\curl, T)$, and \eqref{eq:dofcurlbubbleface}-\eqref{eq:dofcurlbubbleinterior} vanish. By the decomposition \eqref{eq:curlbubbledecomp}, $(\bs u\cdot \bs n_{F})|_F\in \mathbb B_k(F)$, then the vanishing \eqref{eq:dofcurlbubbleface} implies $(\bs u\cdot \bs n_{F})|_F=0$. Thus $\bs u\in \mathbb B_k^n(T)$, which together with the vanishing~\eqref{eq:dofcurlbubbleinterior} yields $\bs u=\bs0$. 
%\end{proof}

%
%Define $\mathbb B_k(F) =  b_f\mathbb P_{k - (\ell +1)} (f)$ for $f\in\Delta_{\ell}(T)$ and $\ell=0,\ldots, n$, and

We will use $\curl_f$ to denote the $\curl$ operator restricted to a sub-simplex $f$ with $\dim f \geq 1$. For $f\in\Delta_{\ell}(T), \ell=2,\ldots, n-1$, by applying Theorem~\ref{thm:curlbubbletracespacedecomp} to $f$, we have
\begin{equation}\label{eq:curlfbubble}
\mathbb B_k(\curl_f, f) = \mathbb B_{k}^{\ell}(f) \oplus \Oplus_{e\in \partial f}\mathbb B_{k}( e) \boldsymbol{n}_{f}^e.
\end{equation}
% where $\bs n_{f}^e$ is a normal vector of $e$ but parallel to $f$. 
%Here $\curl_f$ operator is the exterior derivative of an $1$-form on $f$. 
Notice that the $\curl_f$-bubble function is defined for $\ell \geq 2$ not including edges. 
Indeed, for an edge $e$ and a vertex ${\bs x}$ of $e$, $\boldsymbol{n}_{e}^{\bs x}$ is $\boldsymbol{t}_{e}$ if $\bs x$ is the ending vertex of $e$ and $-\boldsymbol{t}_{e}$ otherwise. Then for $\ell = 1$
\begin{equation}\label{eq:curlebubble}
\mathbb B_{k}( e) \bs t_e \oplus \Oplus_{{\bs x}\in \partial e} {\rm span}\{\lambda_{\bs x} \boldsymbol{n}_{e}^{\bs x}\} = \mathbb P_{k}(e)\bs t_e.
\end{equation}
which is the full polynomial not vanishing on $\partial e$. 
%Indeed for one dimensional edge $e\in \Delta_1(T)$
%$$
%\mathbb B_k(\curl; e) = \mathbb B_k(\grad; e)\bs t^e = \mathbb B_k(e) \bs t^e.  
%$$

\subsection{Geometric decompositions}
For $e\in\Delta_{\ell}(T)$, we choose the basis $\{\bs n_{f}^e: f = e+i, i\in e^*\}$ for $\mathscr{N}^e$ and a basis $\{\bs t_i^e,  i=1,\ldots,\ell\}$ for $\mathscr{T}^e$. So we have the following geometric decompositions of $\mathbb P_k^n(T) $. 
\begin{theorem}
For $k\geq 1$, we have
\begin{align}
\label{eq:curlpolynomialdecomp}
\mathbb P_k^n(T) &= \mathbb P_1^n(T) \oplus \Oplus_{\ell=1}^n \Oplus_{e\in\Delta_{\ell}(T)}(\mathbb B_k(e)\otimes \spa\{\bs t_i^e\}_{i=1}^{\ell}) \oplus (\mathbb B_k(e)\otimes \spa\{\bs n_{e+i}^e, i\in e^*\}),\\
%\mathbb P_k^n(T) &= \mathbb P_1^n(T) \oplus \Oplus_{e\in\Delta_{1}(T)}\mathbb B_k(e)\bs t_e \oplus \Oplus_{\ell=2}^n\Oplus_{f\in\Delta_{\ell}(T)}\mathbb B_k(\curl_f, f),\\
\label{eq:curlpolynomialdecomp2}
\mathbb P_k^n(T) &= \Oplus_{e\in \Delta_1(T)}\mathbb P_k(e)\bs t_e \oplus \Oplus_{\ell=2}^n\Oplus_{f\in\Delta_{\ell}(T)}\mathbb B_k(\curl_f, f).
\end{align}
\end{theorem}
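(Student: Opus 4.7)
Both decompositions will be obtained from the vector Lagrange decomposition \eqref{eq:Pkvecdec} together with the $t$-$n$ splitting \eqref{eq:tndec}, with the second one a careful rearrangement of the first. To derive \eqref{eq:curlpolynomialdecomp}, I would start from
$$\mathbb P_k^n(T)=\mathbb P_1^n(T)\oplus\Oplus_{\ell=1}^n\Oplus_{e\in\Delta_\ell(T)}\mathbb B_k^n(e),$$
and split each $\mathbb B_k^n(e)$ via \eqref{eq:tndec} into $\mathbb B_k(e)\otimes\mathscr T^e$ and $\mathbb B_k(e)\otimes\mathscr N^e$, using the specific bases $\{\bs t_i^e\}_{i=1}^\ell$ for $\mathscr T^e$ and $\{\bs n_{e+i}^e:i\in e^*\}$ for $\mathscr N^e$. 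The only substantive verification is that the second set is indeed a basis of $\mathscr N^e$: each $\bs n_{e+i}^e$ is tangent to $e+i$ and normal to $e$, so it lies in the $(n-\ell)$-dimensional space $\mathscr N^e$, and the $n-\ell$ such vectors are linearly independent because they point from the affine hull of $e$ toward the $n-\ell$ affinely independent vertices $\{\bs x_i:i\in e^*\}$. When $\ell=n$ we have $e^*=\varnothing$ and the normal summand drops out, matching the statement.

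For \eqref{eq:curlpolynomialdecomp2} I would rearrange the terms of \eqref{eq:curlpolynomialdecomp}. For $e\in\Delta_\ell(T)$ with $1\le\ell\le n-1$, the normal block splits as
$$\mathbb B_k(e)\otimes\spa\{\bs n_{e+i}^e,\,i\in e^*\}=\Oplus_{f\supset e,\,f\in\Delta_{\ell+1}(T)}\mathbb B_k(e)\bs n_f^e.$$
Swapping the outer index to $f\in\Delta_{\ell'}(T)$ with $\ell'=\ell+1\ge 2$ and the inner index to $e\in\partial f$, and then combining with the tangential block $\mathbb B_k^{\ell'}(f)$ of the same $f$, produces $\mathbb B_k^{\ell'}(f)\oplus\Oplus_{e\in\partial f}\mathbb B_k(e)\bs n_f^e$, which is exactly $\mathbb B_k(\curl_f,f)$ by \eqref{eq:curlfbubble}. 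This accounts for all $f\in\Delta_{\ell'}(T)$ with $\ell'\ge 2$. The leftover pieces from \eqref{eq:curlpolynomialdecomp} are $\mathbb P_1^n(T)$ together with $\Oplus_{e\in\Delta_1(T)}\mathbb B_k(e)\bs t_e$. To merge them into $\Oplus_{e\in\Delta_1(T)}\mathbb P_k(e)\bs t_e$, I would write $\mathbb P_1^n(T)=\Oplus_{\bs x\in\Delta_0(T)}\mathbb R^n$ and at each vertex $\bs x$ pick the edge-direction basis $\{\bs n_e^{\bs x}:\bs x\in\partial e\}$ of $\mathbb R^n$, which is well-defined because the $n$ edges through $\bs x$ point to the $n$ other affinely independent vertices. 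Re-indexing that double sum by edge and applying \eqref{eq:curlebubble} converts the vertex contributions together with $\mathbb B_k(e)\bs t_e$ into $\mathbb P_k(e)\bs t_e$ on each edge.

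The principal obstacle is combinatorial bookkeeping: keeping the direct-sum property intact across the index swaps. I would verify that every basis block $\mathbb B_k(e)\bs n_f^e$ occurs exactly once on each side of the identity---attached to its unique facet-in-super-simplex pair $(e,f)$ in \eqref{eq:curlpolynomialdecomp}, and inside $\mathbb B_k(\curl_f,f)$ in \eqref{eq:curlpolynomialdecomp2}---and similarly that each vertex contribution $\spa\{\lambda_{\bs x}\bs n_e^{\bs x}\}$ is used exactly once. A dimension count that both sides equal $n\binom{n+k}{n}$ serves as a sanity check; once the inclusion $\supseteq$ holds by construction, equality follows automatically. No new identities beyond \eqref{eq:Pkvecdec}, \eqref{eq:tndec}, \eqref{eq:curlfbubble} and \eqref{eq:curlebubble} are required.
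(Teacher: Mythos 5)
Your proposal is correct and follows essentially the same route as the paper: \eqref{eq:curlpolynomialdecomp} is obtained as the component form of \eqref{eq:Pkvecdec} via the $t$-$n$ splitting with the basis $\{\bs n_{e+i}^e, i\in e^*\}$ of $\mathscr N^e$, and \eqref{eq:curlpolynomialdecomp2} follows by shifting the normal blocks up one level, swapping the order of summation over $e$ and $f$, and invoking \eqref{eq:curlfbubble} together with \eqref{eq:curlebubble} to absorb the vertex contributions of $\mathbb P_1^n(T)$ into $\mathbb P_k(e)\bs t_e$. Your explicit verification that $\{\bs n_{e+i}^e\}$ spans $\mathscr N^e$ and your bookkeeping of the vertex terms are slightly more detailed than the paper's, but the argument is the same.
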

\begin{proof}
Decomposition \eqref{eq:curlpolynomialdecomp} is the component form of decomposition \eqref{eq:Pkvecdec}. We can write $\mathbb B_k(e)\otimes \spa\{\bs n_{e+i}^e, i\in e^*\} = \Oplus_{f\in \Delta_{\ell+1}(T),e\subseteq f}\mathbb B_k( e)\boldsymbol{n}_{f}^e.$
%For $e\in\Delta_{\ell-1}(T)$, the face normal vectors $\{\bs n_{f}^e: f\in \Delta_{\ell}(T), e\subseteq f\}$ form a basis of $\mathscr{N}^e$. So we have
%$$
%\mathbb B_k( e) \otimes \mathscr{N}^e =\Oplus_{f\in \Delta_{\ell}(T), e\subseteq f}\mathbb B_k( e)\boldsymbol{n}_{f}^e.
%$$
Then in the summation \eqref{eq:curlpolynomialdecomp}, we shift the normal component one level up and switch the sum of $e$ and $f$:
% and use \eqref{eq:curlfbubble} to get the decomposition \eqref{eq:curlpolynomialdecomp}. 
% Then we write
\begin{align*}
&\Oplus_{\ell = 1}^n\Oplus_{e\in \Delta_{\ell}(T)} \big [ \mathbb B_{k}^{\ell}(e)\Oplus_{f\in \Delta_{\ell+1}(T),e\subseteq f}\mathbb B_k( e)\boldsymbol{n}_{f}^e \big ]\\
=&\Oplus_{e\in\Delta_{1}(T)}\mathbb B_k(e)\bs t_e \oplus \Oplus_{\ell=2}^n\Oplus_{f\in\Delta_{\ell}(T)}\big [\mathbb B_{k}^{\ell}(f) \Oplus_{e\subseteq \partial f}\mathbb B_k( e)\boldsymbol{n}_{f}^e \big ].
\end{align*}
Then by the characterization \eqref{eq:curlfbubble} of $\mathbb B_k(\curl_f, f)$ and \eqref{eq:curlebubble}, we get the decomposition \eqref{eq:curlpolynomialdecomp2}.
%
% We then distribute the $n$-component of vector function value at a vertex to the $n$ edges connected to this vertex. For a vertex $\bs x$ of $T$, there are $n$ edges of $T$ containing this vertex. We use $n$ edge vectors as a basis of $\mathbb R^n$ and thus
% $$
% \mathbb P_1^n(T) \oplus \Oplus_{e\in \Delta_1(T)}\mathbb B_k( e)\bs t_e = \Oplus_{e\in \Delta_1(T)} \left (\mathbb P_1(e)\bs t_e \oplus \mathbb B_k( e)\bs t_e\right) = \Oplus_{e\in \Delta_1(T)}\mathbb P_k(e)\bs t_e.
% $$
% %It is easy to see that $\mathbb P_k(e)=\mathbb P_1(e)\oplus \mathbb B_k( e)$. 
% Thus \eqref{eq:curlpolynomialdecomp2} holds.
\end{proof}
Decomposition \eqref{eq:curlpolynomialdecomp2} is the counterpart of \eqref{eq:Lagdecbubble} for Lagrange element. 
%In decomposition~\eqref{eq:curlpolynomialdecomp2}, the linear vector polynomial $\mathbb P_1^n(T)$ is redistributed to edges and the second kind N\'ed\'elec edge element~\cite{Nedelec1986} can be derived. 

\subsection{Tangential-Normal decomposition of the second family of edge elements}
Recall that for $e\in\Delta_{\ell-1}(T)$, the basis 
$\{\bs n_{f}^e: f\in \Delta_{\ell}(T), e\subseteq f\}$ of $\mathscr{N}^e$ is dual to the basis $\{\bs n_{F}: F\in \Delta_{n-1}(T), e\subseteq F\}$; see Lemma \ref{lm:dualnormal}.  

\begin{theorem}\label{thm:NedelecTNdecomp}
Take $\mathbb P_k^n(T)$ as the shape function space.
% Given an $e\in \Delta_{\ell-1}(T)$, we choose $\{\bs n_{f}^e, e\subseteq f\in\Delta_{\ell}(T)\}$ as the basis for the normal plane $\mathscr N^e$ and a basis $\{\bs t_i^e\}$ of $\mathscr T^e$. Then the DoFs%, for $\ell = 1,\ldots, n$, 
%\begin{subequations}
%\label{eq:dofveccurl}
%\begin{align}
%\label{eq:dofvectortcurl}
%\int_e (\bs u\cdot \bs t_i^e)\ p \dd s,& \quad i=1,\ldots, \ell-1, p\in \mathbb P_{k - \ell} (e), e\in \Delta_{\ell-1}(T), \ell = 2,\ldots, n,\\
%\label{eq:dofvectorncurl}
%\int_e (\bs u\cdot \bs n_{f}^e)\ p \dd s, &\quad f\in\Delta_{\ell}(T), e\subseteq f, p\in \mathbb P_{k - \ell} (e), e\in \Delta_{\ell-1}(T), \ell = 1,\ldots, n,\\ 
%\label{eq:dofvectorElementcurl}
%\int_T \bs u\cdot \bs q \dx, &\quad \bs q\in \mathbb B_k^n(T) 
%\end{align}
%\end{subequations}
%are equivalent to the second-kind N\'ed\'elec element
Then it is determined by the following DoFs
\begin{subequations}\label{eq:dofNedelec}
\begin{align}
\label{eq:dofvectoredgecurl}
\int_e \bs u\cdot \bs t\ p \dd s,& \quad p\in \mathbb P_{k} (e), e\in \Delta_1(T),\\
\label{eq:dofvectorfacecurl}
\int_f \bs u\cdot \bs p \dd s,& \quad \bs p\in \mathbb B_k(\curl_f, f),f\in\Delta_{\ell}(T), \ell=2,\ldots, n
%, \\
%\label{eq:bubbleDoFcurl}
%\int_T \bs u \cdot \bs p \dx, &\quad \bs p\in \mathbb B_k(\curl, T). 
\end{align}
\end{subequations}
\end{theorem}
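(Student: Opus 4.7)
The plan is to establish unisolvence by dimension counting followed by an injectivity argument that inducts on the dimension of the sub-simplex.

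\emph{Dimension count.} The geometric decomposition \eqref{eq:curlpolynomialdecomp2} immediately gives
$$
\dim \mathbb P_k^n(T) = \sum_{e\in\Delta_1(T)} \dim \mathbb P_k(e) + \sum_{\ell=2}^{n}\sum_{f\in\Delta_\ell(T)} \dim \mathbb B_k(\curl_f,f),
$$
which matches the cardinality of the DoFs in \eqref{eq:dofNedelec} exactly. Hence it suffices to prove that the DoF map is injective.

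\emph{Injectivity by induction on $\ell$.} Assume all DoFs in \eqref{eq:dofNedelec} vanish for some $\bs u \in \mathbb P_k^n(T)$. I would prove by induction on $\ell=1,\ldots,n-1$ that $\Pi_f \bs u|_f = 0$ for every $f\in\Delta_\ell(T)$. The base case $\ell=1$ is immediate: on any edge $e$, the polynomial $\bs u\cdot\bs t_e|_e$ lies in $\mathbb P_k(e)$ and all its moments against $\mathbb P_k(e)$ vanish by \eqref{eq:dofvectoredgecurl}, so $\bs u\cdot\bs t_e|_e=0$. For the inductive step, fix $f\in\Delta_\ell(T)$ with $\ell\geq 2$, and set $\bs v := \Pi_f \bs u|_f$, a polynomial vector field of degree $k$ on $f$ valued in $\mathscr T^f$. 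For any $f'\in\Delta_{\ell-1}(f)\subset\Delta_{\ell-1}(T)$, the inclusion $\mathscr T^{f'}\subset\mathscr T^f$ implies $\bs u-\Pi_f\bs u\in\mathscr N^f\perp\mathscr T^{f'}$, so $\Pi_{f'}(\bs v|_{f'}) = \Pi_{f'}(\bs u|_{f'}) = 0$ by the inductive hypothesis. Applying the analogue of \eqref{eq:tangentialtracetangentialpart} within $f$, this shows $\tr^{\curl_f}_{f'}\bs v = 0$, and varying $f'$ yields $\bs v \in \mathbb B_k(\curl_f, f)$. Now plug $\bs p = \bs v$ into \eqref{eq:dofvectorfacecurl}: since $\bs v$ is tangent to $f$, the integrand $\bs u\cdot\bs v = (\Pi_f\bs u)\cdot\bs v = |\bs v|^2$ on $f$, so $\int_f |\bs v|^2 \dd s = 0$ and $\bs v = 0$, completing the induction.

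\emph{Final step.} The induction at level $\ell = n-1$ gives $\Pi_F\bs u|_F = 0$ for every $F\in\Delta_{n-1}(T)$, which by \eqref{eq:tangentialtracetangentialpart} is equivalent to $\tr^{\curl}_F\bs u = 0$. Thus $\bs u\in \mathbb B_k(\curl,T) = \mathbb B_k(\curl_T,T)$, and choosing $\bs p = \bs u$ in \eqref{eq:dofvectorfacecurl} with $f=T$ yields $\int_T |\bs u|^2 \dd x = 0$, whence $\bs u = 0$.

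\emph{Main obstacle.} The delicate point is the inductive step's claim that $\bs v\in\mathbb B_k(\curl_f,f)$. One has to verify two compatibilities carefully: first, that tangential vanishing of $\bs u$ on the lower-dimensional skeleton $f'\subset\partial f$ corresponds exactly to the vanishing tangential trace of the \emph{intrinsic} tangential field $\bs v$ on $f$, which relies on the orthogonality $\mathscr N^f\perp\mathscr T^{f'}$; and second, that the restriction to $f$ of the global DoF \eqref{eq:dofvectorfacecurl} really reduces to testing $\bs v$ against elements of $\mathbb B_k(\curl_f,f)$ on $f$ itself. Once these two identifications are made, the rest of the proof is a clean $L^2$-orthogonality argument.
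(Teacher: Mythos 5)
Your proof is correct, but it takes a genuinely different route from the paper's. The paper derives the theorem by rearrangement: starting from the tangential--normal decomposition \eqref{eq:curlpolynomialdecomp}, it writes down the componentwise moment DoFs \eqref{eq:dofveccurlf} dual to that direct sum, and then observes that these are equivalent to \eqref{eq:dofNedelec} by regrouping via the bubble characterizations \eqref{eq:curlbubbledecomp} and \eqref{eq:curlfbubble}; unisolvence is inherited from the scalar Lagrange decomposition and no kernel argument appears. You instead run the classical unisolvence scheme: match the number of DoFs to $\dim \mathbb P_k^n(T)$ using \eqref{eq:curlpolynomialdecomp2}, then kill the kernel by induction on the skeleton, showing $\Pi_f \bs u|_f = 0$ level by level and testing with $\bs p = \Pi_f\bs u|_f \in \mathbb B_k(\curl_f,f)$. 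The two hinges of your induction are sound: $\Pi_{f'}\bs v|_{f'} = \Pi_{f'}\bs u|_{f'}$ holds because $\mathscr N^f = (\mathscr T^f)^{\perp}$ is orthogonal to $\mathscr T^{f'}\subset \mathscr T^f$, and identifying the vanishing of tangential parts on $\partial f$ with $\tr^{\curl_f}\bs v = 0$ is precisely the lower-dimensional analogue of \eqref{eq:tangentialtracetangentialpart}. What your approach buys is a self-contained unisolvence proof that bypasses the equivalence-of-DoF-sets step (and the injectivity half of Theorem \ref{thm:curlbubbletracespacedecomp}), at the cost of redoing a trace computation on every sub-simplex; what the paper's approach buys is that the DoFs emerge automatically as the dual basis of an explicit direct-sum decomposition, which is what the implementation and nodal-basis sections subsequently exploit.
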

\begin{proof}
%The unisolvence of DoFs \eqref{eq:dofveccurl} for $\mathbb P_k^n(T)$ follows from the geometric decomposition \eqref{eq:curlpolynomialdecomp2} and \eqref{eq:curlfbubble}.
%% Corollary~\ref{lm:vecLdec} by switching the normal basis $\{\bs n_1^e, \dots, \bs n_{n-\ell+1}^e\}$ to $\{\bs n_{f}^e, e\subseteq f\in\Delta_{\ell}(T)\}$.
%
%By rearranging $e$ and $f$, DoFs \eqref{eq:dofveccurl} can be rewritten as
Based on the decomposition \eqref{eq:curlpolynomialdecomp}, the shape function $\mathbb P_k^n(T)$ is determined by the following DoFs
\begin{subequations}
\label{eq:dofveccurlf}
\begin{align}
\label{eq:dofvectortedgecurlf}
\int_e \bs u\cdot \bs t\ p \dd s,& \quad p\in \mathbb P_{k} (e), e\in\Delta_1(T),\\
\label{eq:dofvectortcurlf}
\int_f (\bs u\cdot \bs t_i^f)\ p \dd s,& \quad p\in \mathbb P_{k - (\ell +1)} (f), i=1,\ldots, \ell, \\
\label{eq:dofvectorncurlf}
\int_e (\bs u\cdot \bs n_{f}^e)\ p \dd s, &\quad p\in \mathbb P_{k - \ell} (e), e\in\partial f 
\end{align}
\end{subequations}
for $f\in \Delta_{\ell}(T), \ell = 2,\ldots, n$.
By \eqref{eq:curlbubbledecomp} and \eqref{eq:curlfbubble},  DoFs \eqref{eq:dofveccurlf} are equivalent to DoFs \eqref{eq:dofNedelec}.
\end{proof}

%For $e\in \Delta_{\ell-1}(T)$, we redistribute the normal DoFs $\int_e (\bs u\cdot \bs n_{j}^e)\ p \dd s$ on $e$ with $j=1,\ldots, n-\ell+1$ and $p\in \mathbb P_{k - \ell} (e)$ to the tangential part \eqref{eq:dofvectorncurl} of $f$ for $e\subseteq f\in\Delta_{\ell}(T)$.

\begin{remark}\rm
 The DoFs of the second kind N\'ed\'elec edge element in~\cite{Nedelec1986,ArnoldFalkWinther2006} are
\begin{align*}
\int_e \bs u\cdot \bs t\ p \dd s,& \quad p\in \mathbb P_{k} (e), e\in \Delta_1(T), \\
\int_f \bs u\cdot \bs p \dd s,& \quad \bs p\in \mathbb P_{k-\ell}^{\ell}(f)+(\Pi_f\bs x)\mathbb P_{k-\ell}(f), f\in\Delta_{\ell}(T), \ell=2,\ldots, n. 
%\\
%\int_T \bs u \cdot \bs p \dx, &\quad \bs p\in \mathbb P_{k-n}^n(T)+\bs x\mathbb P_{k-n}(T). 
\end{align*} 
There is an isomorphism between $\mathbb P_{k-\ell}^{\ell}(f)+(\Pi_f\bs x)\mathbb P_{k-\ell}(f)$ and $\mathbb B_k(\curl_f, f)$ for $f\in\Delta_{\ell}(T)$ with $\ell=2, \ldots, n$, that is $\mathbb B_k(\curl_f, f)$ is uniquely determined by DoF 
\begin{equation*}
\int_f \bs u\cdot \bs p \dd s, \quad \bs p\in \mathbb P_{k-\ell}^{\ell}(f)+(\Pi_f\bs x)\mathbb P_{k-\ell}(f),
\end{equation*}
whose proof can be found in Lemma 4.7 in~\cite{ArnoldFalkWinther2006}.
$\qed$
\end{remark}

Given an $e\in \Delta_{\ell-1}(T_h)$, we choose a global $\{\bs n_{f}^e, e\subseteq f\in\Delta_{\ell}(T_h)\}$ as the basis for the normal plane $\mathscr N^e$ and a global basis $\{\bs t_i^e\}$ of $\mathscr T^e$. 
 Define the global finite element space
\begin{align*}
V_{h,k}^{\rm curl}:=\{\bs u\in L^2(\Omega;\mathbb R^n): &\bs u|_T\in\mathbb P_k(T; \mathbb R^n) \textrm{ for each } T\in\mathcal T_h, \\
% &\textrm{ all the DoFs \eqref{eq:dofvectortcurl} across $e\in\Delta_{\ell}(\mathcal T_h)$ are single-valued for  $\ell=1, \ldots, n-1$}, \\
 &\textrm{ all the DoFs \eqref{eq:dofNedelec} are single-valued}\}.
\end{align*}
%Since DoFs \eqref{eq:dofveccurl} are equivalent to DoFs \eqref{eq:dofNedelec}, we have $V_h^{\curl}\subset H(\curl,\Omega)$.

\begin{lemma}
We have $V_h^{\curl}\subset H(\curl,\Omega)$.
\end{lemma}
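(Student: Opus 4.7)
The strategy is to use the characterization of $H(\curl,\Omega)$-conformity from the lemma just before Theorem~\ref{thm:curlbubbletracespacedecomp}: a piecewise smooth $\bs u$ lies in $H(\curl,\Omega)$ if and only if $\Pi_F\bs u$ is single-valued across every interior face $F\in\Delta_{n-1}(\mathcal T_h)$. So fix such an $F$ shared by two elements $T_1,T_2\in\mathcal T_h$, and set $\bs w_i := \Pi_F(\bs u|_{T_i})|_F$ for $i=1,2$. The goal is to show $\bs w_1=\bs w_2$.

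Each $\bs w_i$ is a polynomial tangent vector field of degree $k$ on the $(n-1)$-simplex $F$, i.e.\ $\bs w_i\in\mathbb P_k(F)\otimes \mathscr T^F$. This is precisely the shape function space of the second-family edge element on the simplex $F$. By Theorem~\ref{thm:NedelecTNdecomp} applied with $F$ playing the role of $T$, such a tangential polynomial is uniquely determined by the DoFs
\begin{align*}
\int_e \bs w_i\cdot \bs t_e\, p \dd s, & \quad p\in\mathbb P_k(e),\ e\in \Delta_1(F),\\
\int_g \bs w_i\cdot \bs q \dd s, & \quad \bs q\in \mathbb B_k(\curl_g,g),\ g\in\Delta_\ell(F),\ \ell=2,\ldots,n-1.
\end{align*}
So it suffices to show that each of these DoFs coincides for $i=1,2$.

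The key observation is that every test function appearing in these DoFs is tangent to $F$. Indeed $\bs t_e\in\mathscr T^e\subset\mathscr T^F$ for $e\subset F$; and for $g\subseteq F$ with $\dim g=\ell\geq 2$, the characterization \eqref{eq:curlfbubble} shows $\mathbb B_k(\curl_g,g)\subset \mathbb P_k(g)\otimes\mathscr T^g\subset \mathbb P_k(g)\otimes \mathscr T^F$. Consequently, on every sub-simplex $e$ or $g$ of $F$ one has $\bs w_i\cdot\bs t_e = (\bs u|_{T_i})\cdot\bs t_e$ and $\bs w_i\cdot\bs q=(\bs u|_{T_i})\cdot\bs q$, because the components normal to $F$ of $\bs u|_{T_i}$ are annihilated by the tangential test function. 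Thus the DoFs determining $\bs w_i$ are exactly the DoFs \eqref{eq:dofNedelec} of $\bs u|_{T_i}$ associated with the sub-simplices of $F$ (namely the edges of $F$ and the higher-dimensional sub-faces of $F$, including $F$ itself in the case $\ell=n-1$).

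By the definition of $V_{h,k}^{\curl}$, all these DoFs are single-valued across $T_1$ and $T_2$, so they agree. Invoking the unisolvence on $F$ then yields $\bs w_1=\bs w_2$, and the lemma is proved. I do not anticipate any serious obstacle: the only subtle point is keeping track that the test functions lie in $\mathscr T^F$ so that $\Pi_F$ can be inserted or removed at will, and that the inclusion $\mathscr T^g\subset\mathscr T^F$ for $g\subset F$ guarantees the sub-simplex DoFs on $F$ are genuinely sub-DoFs of the element-wise DoFs on $T_1,T_2$.
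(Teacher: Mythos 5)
Your proof is correct and follows essentially the same route as the paper: both reduce the claim to showing that $\Pi_F\bs u$ is uniquely determined by the single-valued DoFs attached to the sub-simplices of $F$, whose test functions lie in $\mathscr T^F$. The only (welcome) refinement is that you justify the unisolvence step by applying Theorem~\ref{thm:NedelecTNdecomp} to $F$ viewed as an $(n-1)$-simplex, whereas the paper simply lists the relevant component-form DoFs \eqref{eq:dofveccurlf} and asserts that they determine $\Pi_F\bs u$.
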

\begin{proof}
For an $F\in\Delta_{n-1}(T)$, DoFs \eqref{eq:dofveccurlf} related to $\Pi_F\bs u$ are
\begin{align*}
\int_e (\Pi_F\bs u\cdot \bs t_i^e)\ p \dd s,& \quad i=1,\ldots, \ell-1, p\in \mathbb P_{k - \ell} (e), e\in \Delta_{\ell-1}(F), \ell = 2,\ldots, n,\\
\int_e (\Pi_F\bs u\cdot \bs n_{f}^e)\ p \dd s, &\quad f\in\Delta_{\ell}(F), e\subseteq f, p\in \mathbb P_{k - \ell} (e), e\in \Delta_{\ell-1}(F), \ell = 1,\ldots, n-1,
\end{align*}
thanks to DoFs \eqref{eq:dofveccurlf}, which uniquely determine $\Pi_F\bs u$.
\end{proof}

\subsection{The second family of edge elements}\label{sec:nedelecbasis}

%For an interpolation point $\bs x\in e\in \Delta_{\ell}(T)$, we collect the above $t-n$ basis together and write as $\{\bs e_i^{f}, i=0,\ldots, n-1\} = \{ \bs t_i^f, i=1,\ldots, \ell,  \; \bs n_{f}^e,  f= e+i, i\in e^*\}$ and its dual basis $\{\hat{\bs e}_i^{f}, i=0,\ldots, n-1\} = \{ \hat{\bs t}_i^f, i=1,\ldots, \ell, \bs n_{F_i}/ (\bs n_{F_i}\cdot \bs n^e_{e+i}), i\in e^*\}$.
% For $\bs x_{\alpha} \in \mathcal X_{\mathring{f}}, f \in \Delta_{\ell}(T)$, by Corollary \ref{lem:basisfunctions}, the basis related to $\bs x_{\alpha}$ are
% $$
% \phi_{\bs\alpha}(\bs x)\bs t_i^f, i=1,\ldots, \ell; \quad
% \frac{1}{\bs n_{f+i}^f\cdot\bs n_{F_i}}\phi_{\bs\alpha}(\bs x)\bs n_{f+i}^f,  i\in f^*. 
% $$

By Corollary \ref{cor:basisfunctions}, we obtain a nodal basis for the second-kind N\'ed\'elec edge element.

\begin{theorem}
For each $e\in \Delta_{\ell}(T), \ell = 0,1,\ldots, n$, we choose $\{\bs e_i^{e}, i=0,\ldots, n-1\} = \{ \bs t_i^e, i=1,\ldots, \ell, \; \; \; \bs n_{f}^e,  f= e+i, i\in e^*\}$ and its dual basis $\{\hat{\bs e}_i^{e}, i=0,\ldots, n-1\} = \{ \hat{\bs t}_i^e, i=1,\ldots, \ell, \bs n_{F_i}/ (\bs n_{F_i}\cdot \bs n^e_{e+i}), i\in e^*\}$.
A basis function of the $k$-th order second kind N\'ed\'elec edge element space on $T$ is:
$$
\{\phi_{\bs\alpha}(\bs x)\hat{\bs e}^e_{i},  i=0,1,\ldots, n-1, \; \;  \alpha \in \mathbb T_k^{\ell}(\mathring{e})\}_{e \in \Delta_{\ell}(T), \ell =0,1,\ldots, n},
$$
with the DoFs at the interpolation points defined as:
\begin{align}\label{edgenodalDoFs}
\{\boldsymbol{u}(\boldsymbol{x}_{\alpha}) \cdot \bs e^e_i,  i=0,1,\ldots, n-1, \; \;  \boldsymbol x_{\alpha} \in \mathcal X_{\mathring{e}} \}_{e \in \Delta_{\ell}(T), \ell =0,1,\ldots, n}.
\end{align}
\end{theorem}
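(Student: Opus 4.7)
The plan is to reduce the theorem to a direct application of Corollary \ref{cor:basisfunctions} by verifying that the proposed frame $\{\bs e_i^e\}$ and the claimed dual frame $\{\hat{\bs e}_i^e\}$ are genuinely dual at each interpolation point, and then to check that the resulting nodal basis organizes itself correctly along sub-simplices. Since each interpolation point $\bs x_\alpha\in\mathcal X_T$ lies in the interior of a unique sub-simplex $e(\alpha)$, the duality computation reduces to the pointwise identity $\hat{\bs e}_i^e\cdot\bs e_j^e=\delta_{ij}$ for every fixed $e\in\Delta_\ell(T)$.

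To carry out the duality check I would handle the four block structure $\{\bs t, \bs n\}\times\{\hat{\bs t}, \bs n_F/(\bs n_F\cdot\bs n_{e+i}^e)\}$ one block at a time. The tangential-tangential block is dual by the way $\hat{\bs t}_i^e$ was defined. The tangential-normal cross term $\bs t_i^e\cdot \bs n_{F_j}$ vanishes because, for $j\in e^*$, the vertex $\bs x_j$ is not a vertex of $e$, so $e\subseteq F_j$, and any tangent vector of $e$ is a tangent vector of $F_j$ and hence orthogonal to $\bs n_{F_j}$. The remaining normal-tangential term $\bs n_{e+i}^e\cdot \hat{\bs t}_j^e$ vanishes because $\bs n_{e+i}^e\in\mathscr N^e$ while $\hat{\bs t}_j^e\in\mathscr T^e$, the two spaces being orthogonal complements. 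The normal-normal block is precisely the content of Lemma \ref{lm:dualnormal} applied with the sub-simplex there taken to be $e$: by that lemma $\bs n_{e+i}^e\cdot\bs n_{F_j}=0$ for $i\neq j$ in $e^*$, and the normalization $1/(\bs n_{F_i}\cdot\bs n_{e+i}^e)$ fixes the $i=j$ entry to $1$.

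With duality in hand, Corollary \ref{cor:basisfunctions} immediately produces a basis $\{\phi_{\bs\alpha}(\bs x)\hat{\bs e}_i^{e(\alpha)}:\bs\alpha\in\mathbb T_k^n,\ i=0,\ldots,n-1\}$ of $\mathbb P_k^n(T)$ dual to the nodal DoFs $\bs u(\bs x_\alpha)\cdot\bs e_i^{e(\alpha)}$. The partition $\mathcal X_T=\bigoplus_{\ell=0}^n\bigoplus_{e\in\Delta_\ell(T)}\mathcal X_{\mathring e}$ lets me regroup the basis and DoFs by sub-simplex $e$, which is precisely the form stated in the theorem. Since the shape function space of the second-kind N\'ed\'elec edge element is $\mathbb P_k^n(T)$, this regrouping yields a valid nodal basis on $T$.

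The final step is to reconcile the nodal DoFs \eqref{edgenodalDoFs} with the moment DoFs \eqref{eq:dofNedelec} of Theorem \ref{thm:NedelecTNdecomp}. By counting, both collections have cardinality $\dim\mathbb P_k^n(T)$; by the duality just proved, the nodal DoFs are unisolvent on $\mathbb P_k^n(T)$; hence the two systems determine the same finite element. The principal delicate point, and the main thing to get right, is the normal-normal duality in the second paragraph: one must carefully distinguish the ``inner'' normals $\bs n_{e+i}^e$ tangential to the enlarged sub-simplex $e+i$ and the ``outer'' normals $\bs n_{F_j}$ to the $(n-1)$-faces of $T$, and verify that these two families of normals to $e$ are biorthogonal in the sense of Lemma \ref{lm:dualnormal}; everything else reduces to known orthogonality between tangent and normal planes of a sub-simplex.
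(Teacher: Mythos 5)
Your proposal is correct and follows essentially the same route as the paper, which gives no explicit proof but simply invokes Corollary \ref{cor:basisfunctions} together with the dual-normal construction of Lemma \ref{lm:dualnormal}; your block-by-block verification of the frame duality (tangent--tangent, tangent--normal, and the biorthogonality $\bs n_{e+i}^e\cdot\bs n_{F_j}=0$ for $i\neq j$) is exactly the content the paper leaves implicit. The only minor caveat is your closing claim that equal cardinality plus unisolvence makes the nodal DoFs ``determine the same finite element'' as the moment DoFs \eqref{eq:dofNedelec}: on a single simplex this is fine, but the equivalence of the resulting global spaces additionally requires the single-valuedness/continuity argument the paper handles separately via the global choice of $\bs t_i^e$ and $\bs n_f^e$.
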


Notice that the basis $\{\bs t^e_i\}$ depends only on $e$ and $\bs n_{f}^e$ depends on $e$ and $f$ but independent $T$. By asking the corresponding DoFs single valued, we obtain the tangential continuity. 
We can define the edge finite element space as
\begin{align*}
V_{h,k}^{\rm curl}:=\{&\bs u\in L^2(\Omega;\mathbb R^n): \bs u|_T\in\mathbb P_k(T; \mathbb R^n) \textrm{ for each } T\in\mathcal T_h, \textrm{ the DoFs } \boldsymbol{u}(\boldsymbol{x}_{\alpha}) \cdot \bs t^e_i, \boldsymbol{u}(\boldsymbol{x}_{\alpha}) \cdot \bs n^e_f \\
% &\textrm{ all the DoFs \eqref{eq:dofvectortcurl} across $e\in\Delta_{\ell}(\mathcal T_h)$ are single-valued for  $\ell=1, \ldots, n-1$}, \\
 &\text{ in \eqref{edgenodalDoFs} are single-valued across all $e$ and $f$ with $\dim e, \dim f \leq n-1$}\}.
\end{align*}

\subsubsection{2D basis on triangular meshes}
%\begin{definition}
Let $T$ be a triangle, for lattice point $\boldsymbol{x}$ located in different sub-simplices, %on vertices, edges, and interior,  
we shall choose different frame $\{\bs e_{\boldsymbol x}^0, \bs e_{\bs x}^1\}$ 
at $\bs x$ and its dual frame 
$\{\hat {\bs e}_{\bs x}^0, \hat{\bs e}_{\bs x}^1\}$ 
as follows:
\begin{enumerate}
  \item If $\bs x \in \Delta_0(T)$, assume the two adjacent edges are $e_0$ 
    and $e_1$, then
  $$
  \bs e_{\bs x}^0 = \bs t_{e_0}, \quad 
  \bs e_{\bs x}^1 = \bs t_{e_1}, \quad
  \hat{\bs e}_{\bs x}^0 = \frac{\boldsymbol n_{e_1}}{\boldsymbol n_{e_1} 
  \cdot \boldsymbol t_{e_0}},
   \quad 
  \hat{\bs e}_{\bs x}^1 = \frac{\boldsymbol n_{e_0}}{\boldsymbol n_{e_0} 
  \cdot \boldsymbol t_{e_1}}.
  $$
  \item If $\bs x \in \mathcal X_{\mathring{e}}, e \in \Delta_1(T)$, 
    then
  $$
  \bs e_{\bs x}^0 = \bs t_{e}, \quad \bs e_{\bs x}^1 = \bs n_{e}, \quad	
  \hat{\bs e}_{\bs x}^0 = \bs t_{e}, \quad \hat{\bs e}_{\bs x}^1 = \bs n_{e}.	
  $$  
  \item If $\bs x \in \mathcal X_{\mathring{T}}$, then
  $$
  \bs e_{\bs x}^0 = (1, 0), \quad \bs e_{\bs x}^1 = \bs (0, 1), \quad
  \hat{\bs e}_{\bs x}^0 = (1, 0), \quad \hat{\bs e}_{\bs x}^1 = \bs (0, 1).
  $$		
\end{enumerate}
%\end{definition}

\begin{figure}[htp]
\centering
\includegraphics[width=0.8\textwidth]{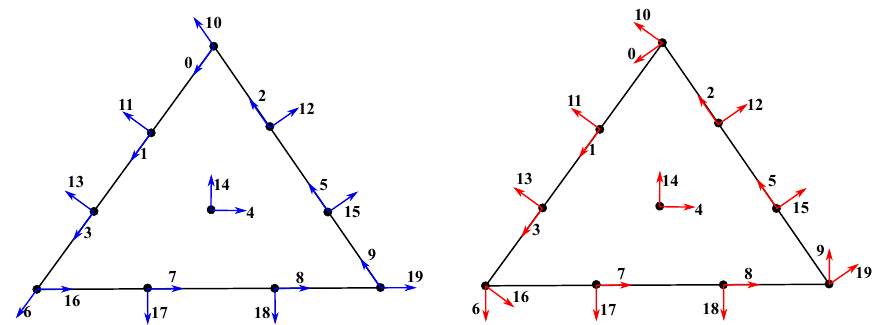}
\caption{The left figure shows $\{\bs e_0, \bs e_1\}$ at each interpolation 
point, the right figure shows $\{\hat{ \bs e}_0, \hat{ \bs e}_1\}$ 
at each interpolation point.}
\end{figure}

\subsubsection{3D basis on tetrahedron meshes}
%\begin{definition}
  Let $T$ be a tetrahedron, for any $\bs x \in \mathcal X_{T}$, 
  define a frame $\{\bs e_{\boldsymbol x}^0, \bs e_{\bs x}^1, \bs e_{\bs x}^2\}$ 
  at $\bs x$ and its dual frame 
  $\{\hat {\bs e}_{\bs x}^0, \hat{\bs e}_{\bs x}^1, \hat{\bs e}_{\bs x}^2\}$ 
  as follows:
\begin{enumerate}
  \item If $\bs x \in \Delta_0(T)$ and adjacent edges of $\bs x$ are 
    $e_0$, $e_1$, $e_2$, adjacent faces of $\bs x$ are 
    $f_0$, $f_1$, $f_2$, then
  $$
  \bs e_{\bs x}^0 = \bs t_{e_0}, \quad 
  \bs e_{\bs x}^1 = \bs t_{e_1}, \quad 
  \bs e_{\bs x}^2 = \bs t_{e_2},
  $$
  $$
  \hat{\bs e}_{\bs x}^0 = \frac{\boldsymbol n_{f_0}}{\boldsymbol n_{f_0} 
    \cdot \boldsymbol t_{e_0}}, \quad 
  \hat{\bs e}_{\bs x}^1 = \frac{\boldsymbol n_{f_1}}{\boldsymbol n_{f_1} 
    \cdot \boldsymbol t_{e_1}}, \quad 
  \hat{\bs e}_{\bs x}^2 = \frac{\boldsymbol n_{f_2}}{\boldsymbol n_{f_2} 
    \cdot \boldsymbol t_{e_2}}.
  $$
  \item If $\bs x \in \mathcal X_{\mathring{e}}, e \in \Delta_1(T)$ 
    and adjacent faces are $f_0$, $f_1$ then
  $$
  \bs e_{\bs x}^0 = \bs t_{e}, \quad 
  \bs e_{\bs x}^1 = \bs n_{f_0}\times \bs t_{e}, \quad 
  \bs e_{\bs x}^2 = \bs n_{f_1}\times \bs t_{e},	
  $$ 
  $$
  \hat{\bs e}_{\bs x}^0 = \bs t_{e}, \quad 
  \hat{\bs e}_{\bs x}^1 = \frac{\boldsymbol n_{f_1}}{\boldsymbol n_{f_1} 
    \cdot (\boldsymbol n_{f_0}\times \boldsymbol t_e)}, \quad	
  \hat{\bs e}_{\bs x}^2 = \frac{\boldsymbol n_{f_0}}{\boldsymbol n_{f_0} 
    \cdot (\boldsymbol n_{f_1}\times \boldsymbol t_e)}.
  $$
  \item If $\bs x \in \mathcal X_{\mathring{f}}, f \in \Delta_2(T)$, 
    the first edge of $f$ is $e$, then
  $$
  \bs e_{\bs x}^0 = \bs t_{e}, \quad 
  \bs e_{\bs x}^1 = \bs t_{e} \times \bs n_f, \quad
  \bs e_{\bs x}^2 = \bs n_{f},
  $$
  $$
  \hat{\bs e}_{\bs x}^0 = \bs t_{e}, \quad 
  \hat{\bs e}_{\bs x}^1 = \bs t_{e} \times \bs n_f, \quad
  \hat{\bs e}_{\bs x}^2 = \bs n_{f}.	
  $$
  \item If $\bs x \in \mathcal X_{\mathring{T}}$, then
  $$
  \bs e_{\bs x}^0 = (1, 0, 0), \quad 
  \bs e_{\bs x}^1 = \bs (0, 1, 0), \quad 
  \bs e_{\bs x}^2 = \bs (0, 0, 1),
  $$
  $$
  \hat{\bs e}_{\bs x}^0 = (1, 0, 0), \quad 
  \hat{\bs e}_{\bs x}^1 = \bs (0, 1, 0), \quad 
  \hat{\bs e}_{\bs x}^2 = \bs (0, 0, 1).
  $$
\end{enumerate}
%\end{definition}

\begin{figure}[htp]
  \begin{minipage}[t]{0.48\textwidth}
  \centering
  \includegraphics[width=0.5\textwidth]{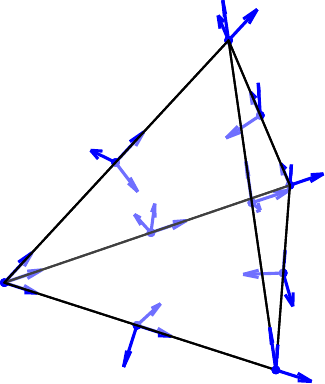}
  \end{minipage}
  \begin{minipage}[t]{0.48\textwidth}
  \centering
  \includegraphics[width=0.5\textwidth]{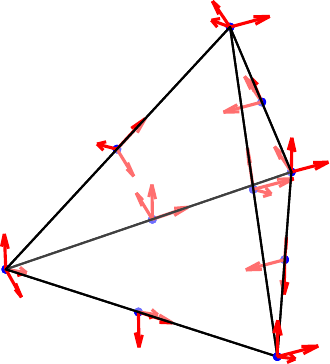}
  \end{minipage}
  \caption{The left figure shows $\{\bs e_0, \bs e_1, \bs e_2\}$ at each 
      interpolation point, the right figure shows 
      $\{\hat{ \bs e}_0, \hat{ \bs e}_1, \hat{\bs e}_2\}$ 
      at each interpolation point.}
\end{figure}

\section{Indexing Management of Degrees of Freedom}\label{sec:implementation}

In Section~\ref{sec:pre}, we have already discussed
the dictionary indexing rule for interpolation points in each element. In this
section, we will address the global indexing rules for Lagrange
interpolation points, ensuring that interpolation points on a sub-simplex,
shared by multiple elements, have a globally unique index.  Given the
one-to-one relationship between interpolation points and DoFs, this is
equivalent to providing an indexing rule for global DoFs in the scalar Lagrange
finite element space. Based on this, we will further discuss the indexing
rules for DoFs in face and edge finite element spaces.

\subsection{Lagrange finite element space}

We begin by discussing the data structure of the tetrahedral mesh, denoted by
\(\mathcal T_h\). Let the numbers of nodes, edges, faces, and cells in
\(\mathcal T_h\) be represented as \lstinline{NN}, \lstinline{NE},
\lstinline{NF}, and \lstinline{NC}, respectively. We utilize two arrays to
represent \(\mathcal T_h\):
\begin{itemize}
  \item \lstinline{node} (shape: \lstinline{(NN,3)}): \lstinline{node[i,j]} represents the $j$-th component of the Cartesian coordinate of the $i$-th vertex.
  \item \lstinline{cell} (shape: \lstinline{(NC,4)}): \lstinline{cell[i,j]} gives the global index of the $j$-th vertex of the $i$-th cell.
\end{itemize}
Given a tetrahedron denoted by \lstinline{[0,1,2,3]}, we define its local edges and faces as:
\begin{itemize}
  \item \lstinline{SEdge = [(0,1),(0,2),(0,3),(1,2),(1,3),(2,3)];}
  \item \lstinline{SFace = [(1,2,3),(0,2,3),(0,1,3),(0,1,2)];}
  \item \lstinline{OFace = [(1,2,3),(0,3,2),(0,1,3),(0,2,1)].}
\end{itemize}
Here, we introduced two types of local faces. The prefix \lstinline{S} implies
sorting, and \lstinline{O} indicates outer normal direction. Both
\lstinline{SFace[i,:]} and \lstinline{OFace[i,:]} represent the face opposite to
the $i$-th vertex but with varied ordering. The normal direction as determined
by the ordering of the three vertices of \lstinline{OFace} matches the outer
normal direction of the tetrahedron. This ensures that the outer normal
direction of a boundary \lstinline{face} points outward from the mesh.
Meanwhile, \lstinline{SFace} aids in determining the global index of the
interpolation points on the face. For an in-depth discourse on indexing,
ordering, and orientation, we direct readers to \mc{sc3} in $i$FEM
\cite{Chen:2008ifem}.

Leveraging the \lstinline{unique} algorithm for arrays, we can derive the
following arrays from \lstinline{cell}, \lstinline{SEdge}, and
\lstinline{OFace}:
\begin{itemize}
  \item \lstinline{edge} (shape: \lstinline{(NE,2)}): \lstinline{edge[i,j]} gives the global index of the $j$-th vertex of the $i$-th edge.
  \item \lstinline{face} (shape: \lstinline{(NF,3)}): \lstinline{face[i,j]} provides the global index of the $j$-th vertex of the $i$-th face.
  \item \lstinline{cell2edge} (shape: \lstinline{(NC,6)}): \lstinline{cell2edge[i,j]} indicates the global index of the $j$-th edge of the $i$-th cell.
  \item \lstinline{cell2face} (shape: \lstinline{(NC,4)}): \lstinline{cell2face[i,j]} signifies the global index of the $j$-th face of the $i$-th cell.
\end{itemize}

Having constructed the edge and face arrays and linked cells to them, we next
establish indexing rules for interpolation points on \(\mathcal T_h\). Let \(k\)
be the degree of the Lagrange finite element space. The number of interpolation
points on each cell is
\[
\mc{ldof} = \dim \mathbb P_k(T) = \frac{(k+1)(k+2)(k+3)}{6},
\]
and the total number of interpolation points on \(\mathcal T_h\) is
\[
\mc{gdof} = \mc{NN} + n_e^k \cdot \mc{NE} + n_f^k \cdot \mc{NF} + n_c^k \cdot \mc{NC},
\]
where
\[
n_e^k = k-1, \quad n_f^k = \frac{(k-2)(k-1)}{2}, \quad n_c^k =
\frac{(k-3)(k-2)(k-1)}{6},
\]
are numbers of interpolation points inside edge, face, and cell, respectively. We need an index mapping from $[0: \mc{ldof}-1]$ to  $[0: \mc{gdof}-1]$. 
See Fig. \ref{fig:assigndofex} for an illustration of the local index and the global index of interpolation points. 

%\begin{figure}[htp]
%\label{fig:tet}
%    \centering
%%    \includegraphics[scale=0.24]{}\qquad \qquad 
%    \includegraphics[scale=0.25]{}
%%\quad        \includegraphics[scale=0.25]{figures/globaltetdof4.pdf}
%    \caption{The indexing rules of tetrahedron's vertices  and interpolation points for $k=4$. Left: the local index in one element. Right: the global index in the triangulation.}
%\end{figure}

%Using the tetrahedral mesh as an illustrative example, Figure~\ref{fig:tet}
%displays the local indexing rules for the tetrahedron's vertices and the
%interpolation points when \(k=4\). 
The tetrahedron's four vertices are ordered
according to the right-hand rule, and the interpolation points adhere to the
dictionary ordering map \(R_3(\boldsymbol \alpha)\). As Lagrange
element is globally continuous, the indexing of interpolation points on the boundary $\partial T$ should be global. Namely a unique index for points on vertices, 
edges, faces should be used and a mapping from the local index to the global index is needed.

We first set a global indexing rule for all interpolation points. We sort the index by vertices, edges, faces, and cells. For the
interpolation points that coincide with the vertices, their global index are set
as $0, 1, \ldots, \mc{NN}-1$.  When $k > 1$,  for the interpolation points that
inside edges, their global index are set as 
$$
\small
\begin{matrix}
 0\\
 1\\
 \vdots\\
 \mc{NE}-1
\end{matrix}
\begin{pmatrix}
0\cdot n_e^k &  \cdots & 1\cdot n_e^k-1\\
1\cdot n_e^k &  \cdots & 2\cdot n_e^k-1\\
 \vdots &  & \vdots \\
(\mc{NE}-1)\cdot n_e^k & \cdots & \mc{NE}\cdot n_e^k - 1\\
\end{pmatrix} +  \mc{NN}.
$$
Here recall that $n_e^k = k-1$ is the number of interior interpolation points on an edge. 
When $k>2$, for the interpolation points that inside each face, their global
index are set as  
$$
\small
\begin{matrix}
 0\\
 1\\
 \vdots\\
 \mc{NF}-1
\end{matrix}
\begin{pmatrix}
0\cdot n_f^k &  \cdots & 1\cdot n_f^k-1\\
1\cdot n_f^k &  \cdots & 2\cdot n_f^k-1\\
\vdots &  & \vdots \\
(\mc{NF}-1)\cdot n_f^k & \cdots & \mc{NF}\cdot n_f^k - 1\\
\end{pmatrix} + \mc{NN} + \mc{NE}\cdot n_e^k,
$$
where $n_f^k = (k-2)(k-1)/2$ is the number of interior interpolation points on $f$. 
When $k>3$, the global index of the interpolation points that inside
each cell can be set in a similar way. 

Then we use the two-dimensional array named \lstinline{cell2ipoint} of shape
\lstinline{(NC,ldof)} for the index map. On the $j$-th interpolation
point of the $i$-th cell, we aim to determine its unique global index and store
it in \lstinline{cell2ipoint[i,j]}.

%The combination of the simplicial lattices set \(\mathbb T^n_k\) and the
%ordering map \(R(\boldsymbol \alpha)\) readily informs the position of each
%interpolation point relative to the simplex and its sub-simplexes. If the $j$-th
%interpolation point either coincides with a vertex or is situated inside a cell,
%the global indexing rule previously discussed simplifies the assignment of
%\lstinline{cell2ipoint[i,j]}. 

For vertices and cell interpolation points, the mapping is straightforward by
the global indexing rule. Indeed \mc{cell} is the mapping of the local index of a vertex to its global index. 
However, complications arise when the
interpolation point is located within an edge or face due to non-unique representations of an edge and a face. 

We use the more complicated face interpolation points as a typical example to illustrate the situation. 
Consider, for instance, the scenario where the $j$-th interpolation point lies
within the $0$th local face \(F_0\) of the $i$-th cell. Let \(\alpha = \)
\lstinline{m = [m0,m1,m2,m3]} be its lattice point. Given that \(F_0\) is
opposite to vertex $0$, we deduce that \(\lambda_0|_{F_0} = 0\), which implies
\lstinline{m0} is $0$. The remaining components of \lstinline{m} are non-zero,
ensuring that the point is interior to \(F_0\). 
%Using the dictionary ordering,
%we can compute its local index \(j\) as:
%\begin{align*}
%j =& \frac{(\mc{m1} + \mc{m2} + \mc{m3})(\mc{m1} + \mc{m2} + \mc{m3} + 1)(\mc{m1} + \mc{m2} + \mc{m3} + 2)}{6} \\
%&+ \frac{(\mc{m2} + \mc{m3})(\mc{m2} + \mc{m3} + 1)}{2} + \mc{m3}.
%\end{align*}

\begin{figure}[htp]
  \begin{minipage}[t]{0.48\textwidth}
  \centering
  \includegraphics[width=0.82\textwidth]{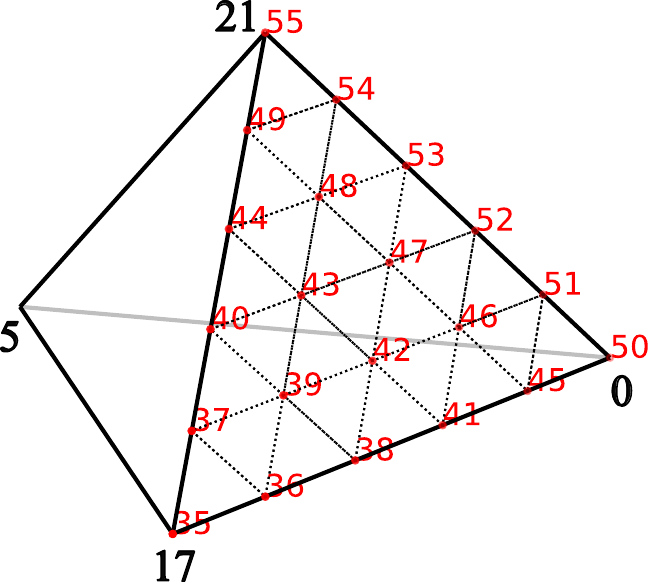}
  \end{minipage}
  \begin{minipage}[t]{0.48\textwidth}
  \centering
  \includegraphics[width=0.6\textwidth]{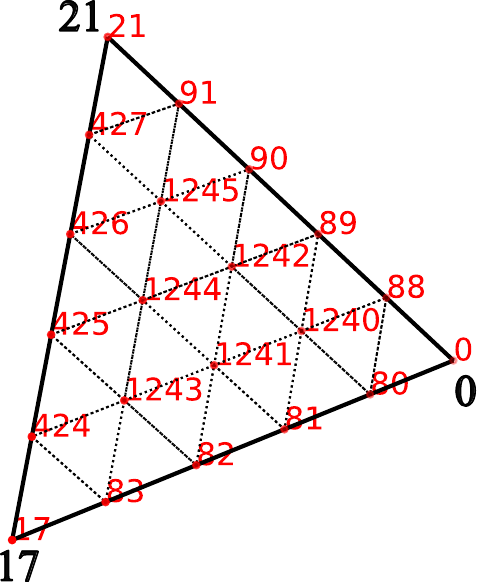}
  \end{minipage}
  \caption{Local indexing (Left) and global indexing(Right) of DoFs for a face
  of tetrahedron, where the local vertex order is \mc{[17, 0,
  21]} and global vertex order is \mc{[0, 17, 21]}. Due to the different ordering of vertices in local and global representation of the face, the ordering of the local indexing and the global indexing is different.}
  \label{fig:assigndofex}
\end{figure}

Two representations for the face with global index \lstinline{cell2face[i,0]} are subsequently acquired:
\begin{itemize}
  \item \lstinline{LFace = cell[i,SFace[0,:]]} (local representation)
  \item \lstinline{GFace = face[cell2face[i,0],:]} (global representation)
\end{itemize}
Although \lstinline{LFace} and \lstinline{GFace} comprise identical vertex
numbers, their ordering differs. For example, \mc{LFace = [5 6 10]} while \mc{GFace =[10 6 5]}. 

The array \lstinline{m = [m1,m2,m3]} has a
one-to-one correspondence with the vertices of \lstinline{LFace}. To match this
array with the vertices of \lstinline{GFace}, a reordering based on argument
sorting is performed:
\begin{lstlisting}
i0 = argsort(argsort(GFace));
i1 = argsort(LFace);
i2 = i1[i0];
m = m[i2];
\end{lstlisting}
In the example of \mc{LFace = [5 6 10]} while \mc{GFace = [10 6 5]}, the input \lstinline{m = [m1,m2,m3]} will be reordered to \lstinline{m = [m3,m2,m1]}.

From the reordered \lstinline{m = [m1,m2,m3]}, the local index \(\ell\) of the
$j$-th interpolation point on the global face \lstinline{f = cell2face[i,0]} can
be deduced:
\begin{align*}
   \ell = & \frac{(\mc{m2}-1 + \mc{m3}-1)(\mc{m2}-1 + \mc{m3}-1 + 1)}{2} + \mc{m3}-1 \\
   =& \frac{(\mc{m2} + \mc{m3} -2)(\mc{m2} + \mc{m3} - 1)}{2} + \mc{m3}-1.
\end{align*}
It's worth noting that the index of interpolation points solely within the face
needs consideration. Finally, the global index for the $j$-th
interpolation point within the $0$th local face of the $i$-th cell is:
\[
    \mc{cell2ipoint[i j]} = \mc{NN} + n_e^k\cdot \mc{NE} + n_f^k\cdot \mc{f} + \ell.
\]

Here, we provide a specific example. Consider a $5$th-degree Lagrange finite
element space on the \mc{c}-th tetrahedron in a
mesh depicted in Fig. \ref{fig:assigndofex}. The vertices of this tetrahedron
are \mc{[5,17,0,21]}, and its 0th face is denoted as \mc{f}, with vertices 
\mc{[0,17,21]}. Assuming that 
\[
    \mc{NN} + n_e^k\cdot \mc{NE} + n_f^k\cdot \mc{f} = 1240.
\]
For the $39$th local DoF on tetrahedron, 
its corresponding multi-index is \mc{[0,3,1,1]} on cell and  \mc{[3,1,1]} 
on face. Since the local face is \mc{[17,0,21]} and the global face is
\mc{[0,17,21]}, then \mc{m=[1,3,1]} and $\ell$ \mc{ =3} thus
\[
    \mc{cell2ipoint[c,39] = 1243}.
\]
Similar, for the $43$th local DoF on tetrahedron, \mc{m = [1,2,2]} and
\mc{$\ell$=4} thus
\[
    \mc{cell2ipoint[c,43] = 1244}.
\]
Fig. \ref{fig:assigndofex} shows the correspondence between the local and the global indexing of DoFs for the cell.

In conclusion, we have elucidated the construction of global indexing for
interpolation points inside cell faces. This method can be generalized for edges
and, more broadly, for interior interpolation points of the low-dimensional
sub-simplex of an \(n\)-dimensional simplex. Please note that for scalar
Lagrange finite element spaces, the \mc{cell2ipoint} array is the mapping array
from local DoFs to global DoFs.

\subsection{Face  and edge finite element spaces}

First, we want to emphasize that the management of DoFs is to manage the
continuity of finite element space.  

The face element space and edge element space are vector finite element spaces, which define DoFs of
vector type by defining a vector frame on each interpolation point.
At the same time, they define their vector basis functions by combining the Lagrange
basis function and the dual frame of the DoFs.

Alternatively, we can say each DoF in the face element or edge element corresponds to a unique interpolation point
$p$ and a unique vector $\boldsymbol e$, and each basis funcion also corresponds
to a unique Lagrange basis function which is defined on $p$ and a vector
$\hat{\boldsymbol e}$ which is the dual to  $\boldsymbol e$. 

The management of DoFs is essentially a counting problem. First of all, we need
to set global and local indexing rules for all DoFs. 
We can globally divide the DoFs into shared and unshared among simplexes. The
DoFs shared among simplexes can be further divided into on-edge and on-face
according to the dimension of the sub-simplex where the DoFs locate. First count the shared DoFs on
each edge according to the order of the edges, then count the shared DoFs on
each face according to the order of the faces, and finally count the unshared
DoFs in the cell. On each edge or face, the DoFs' order can follow the order of
the interpolation points. Note
that, for the face element space and edge element space there are no DoFs shared on nodes. And for the
3D face element space there are no DoFs shared on edges. So the global numbering rule is
similar with the Lagrange interpolation points. 

According to the global indexing rule, we also can get an array named
\lstinline{dof2vector} with shape \lstinline{(gdof, GD)}, where \lstinline{gdof}
is the number of global DoFs and \lstinline{GD} represent geometry dimensions.
And \lstinline{dof2vector[i, :]} store the vector of the $i$-th DoF.

Next we need to set a local indexing rules and generate a array
\lstinline{cell2dof} with shape \lstinline{(NC,ldof)}, where \lstinline{ldof}
is the number of local DoFs on each cell. Note that each DoF was determined by an
interpolation point and a vector. And for each interpolation point, there is a frame
(including \lstinline{GD} vectors) on it. Given a DoF on the $i$-th cell, denote the
local index  of its interpolation point as $p$, and the local index 
of its vector in the frame denote as $q$, then one can set a unique local index
number $j$ by $p$ and $q$, for example
$j = n\cdot q + p$,
where $n$ is the number of interpolation points in the $i$-th cell.  Furthermore, 
we can compute the \lstinline{cell2dof[i,j]} by the global index 
\lstinline{cell2ipoint[i, p]}, the sub-simplex that the interpolation point
locate, and the global indexing rule. 

\begin{remark}\rm 
  Note that the local and global number rules  mentioned above are not unique.
  Furthermore, with the array \lstinline{cell2dof}, the implementation of these higher-order
  finite element methods mentioned in this paper is not fundamentally different
  from the conventional finite element in terms of matrix vector assembly and
  boundary condition handling. $\qed$
\end{remark}

\section{Numerical Examples}\label{sec:numerexamples}

In this section, we numerically verify the 3-dimensional face element basis and edge element basis using two test problems over the domain $\Omega =
(0, 1)^3$ partitioned into a structured tetrahedron mesh $\mathcal T_h$. All the algorithms and numerical examples are implemented based on
the FEALPy package~\cite{FEALPy}.

%\begin{figure}[htp]
%\label{fig:2}
%\centering
%\includegraphics[width=0.6\textwidth]{}
%\caption{Computational mesh corresponding to $h = 1/10$.}
%\end{figure}

\subsection{High Order Elements for Poisson Equation in the Mixed Formulation}
Consider the Poisson problem:
$$
\begin{aligned}
  \left\{
    \begin{aligned}
      \boldsymbol u + \nabla p ={}& 0 \quad \text{ in } \ \Omega,\\
      \quad \mathrm{div}\,\boldsymbol u ={}& f\quad \text{ in } \ \Omega,\\
      \quad \quad \ \ \  p ={}& g \quad \text{ on } \ \partial \Omega.
    \end{aligned}
  \right.
\end{aligned}
$$
The variational problem is : find 
$\boldsymbol u \in H(\mathrm{div}, \Omega)$, $p \in L^2(\Omega)$ to satisfy:

\begin{align*}
  \int_{\Omega} \boldsymbol u \cdot \boldsymbol v \ \mathrm d\boldsymbol x -
  \int_{\Omega} p\,\mathrm{div}\, \boldsymbol v \ \mathrm d\boldsymbol x & =
  -\int_{\partial \Omega} g(\boldsymbol v \cdot \boldsymbol n) \
  \mathrm d\boldsymbol s, \quad \forall~\boldsymbol v \in H(\mathrm{div}, \Omega),\\
  -\int_{\Omega} (\mathrm{div}\, \boldsymbol u) q \ \mathrm d\boldsymbol x & =
  -\int_{\Omega} fq \ \mathrm d\boldsymbol x, \qquad\quad\;\;  \forall~q \in L^2(\Omega).
\end{align*}

Let $V_{h,k}^{\div}$ be the BDM space with degree $k$ on $\mathcal T_h$ and
piecewise polynomial space of degree $k-1$ on $\mathcal T_h$ by $V_{k-1}$.  
The corresponding finite element method is: find 
$\boldsymbol u_h \in V_{h,k}^{\div}, p_h \in V_{k-1}$, such that
\begin{align}
  \int_{\Omega} \boldsymbol u_h \cdot \boldsymbol v_h \ \mathrm d\boldsymbol x -
  \int_{\Omega} p\,\mathrm{div}\,\boldsymbol v_h \ \mathrm d\boldsymbol x & =
  -\int_{\partial \Omega} g(\boldsymbol v_h \cdot \boldsymbol n) \
  \mathrm d\boldsymbol s \quad  \forall~\boldsymbol v_h \in V_{h,k}^{\div},\label{exm2eq1}\\
  -\int_{\Omega} (\mathrm{div}\,\boldsymbol u_h) q_h \ \mathrm d\boldsymbol x &=
  -\int_{\Omega} fq_h \ \mathrm d\boldsymbol x, \qquad\quad\;  \forall~q_h \in V_{k-1}.\label{exm2eq2}
\end{align}

\begin{figure}[htp]
\centering
\includegraphics[width=0.6\textwidth]{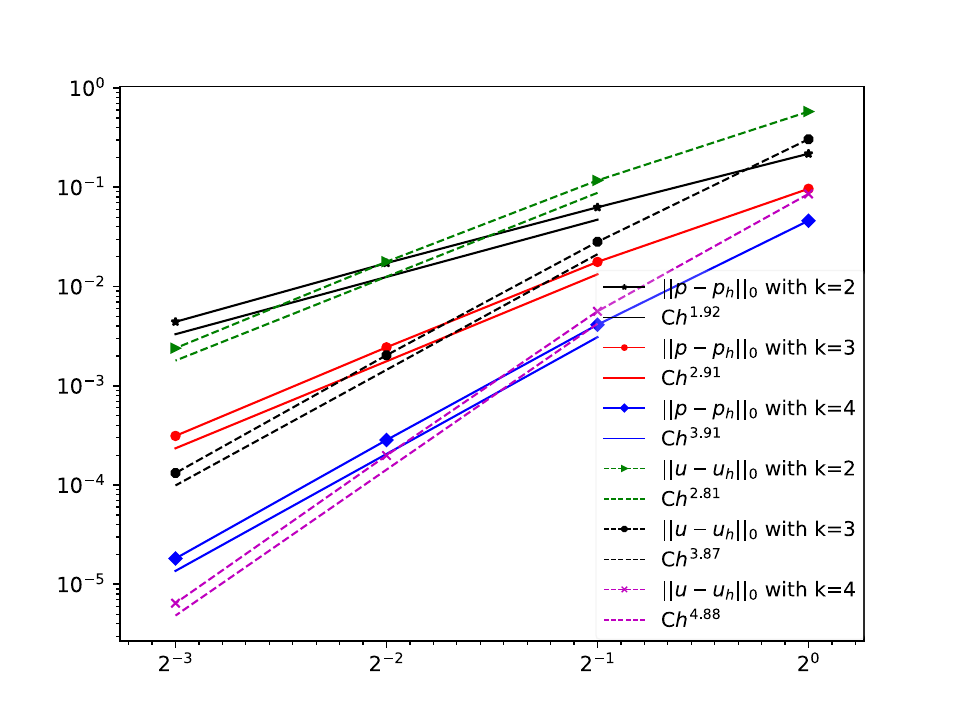}
\caption{Errors $\|\bs u - \bs u_h\|_0$ and $\|p - p_h\|_0$ of finite element
method \eqref{exm2eq1} and \eqref{exm2eq2} on uniformly refined mesh with $k = 2, 3, 4$.}
\label{exm2fig1}
\end{figure}

To test the convergence order of BDM space, we set
$$
\boldsymbol u = 
\begin{pmatrix}
-\pi\sin(\pi x)\cos(\pi y)\cos(\pi z)\\ 
-\pi\cos(\pi x)\sin(\pi y)\cos(\pi z)\\
-\pi\cos(\pi x)\cos(\pi y)\sin(\pi z)
\end{pmatrix},
$$
\begin{align*}
& p = \cos(\pi x)\cos(\pi y)\cos(\pi z), &  & f = 3\pi^2\cos(\pi x)\cos(\pi y)\cos(\pi z).&
\end{align*}

The numerical results are shown in Fig.~\ref{exm2fig1} and it is clear that 
$$
\|\bs u-\bs u_h\|_{0} \leq C h^{k+1}, \quad \|p - p_h\|_{0} \leq Ch^{k}.
$$

\subsection{High Order Elements for Maxwell Equations}
Consider the time harmonic Maxwell equation:
$$
\begin{cases}
\nabla \times (\mu^{-1}\nabla \times \boldsymbol E) - \omega^2 \tilde \epsilon
\boldsymbol E & = \boldsymbol J \quad \text{ in } \ \ \Omega,\\
\qquad\qquad\qquad\qquad \boldsymbol n \times \boldsymbol E & = \boldsymbol 0 \quad \text{ on } \ \
\partial \Omega,
\end{cases}
$$
where $\mu$ is the relative permeability, $\tilde \epsilon$ is the relative permittivity, and $\omega$ is the wavenumber.
The variational problem is: find $\boldsymbol E \in H_0(\mathrm{curl}, \Omega)$ to satisfy:
\begin{align*}
\int_{\Omega}\mu^{-1}(\nabla \times \boldsymbol E)
\cdot (\nabla \times \boldsymbol v) \ \mathrm d\boldsymbol x - 
\int_{\Omega}\omega^2 \tilde 
\epsilon \boldsymbol E \cdot \boldsymbol v\ \mathrm d\boldsymbol x
= \int_{\Omega}\boldsymbol J \cdot \boldsymbol v \ \mathrm d\boldsymbol x, \quad   \forall~\boldsymbol v \in H_0(\curl, \Omega).
\end{align*}
Let $\mathring{V}_{h,k}^{\curl} = V_{h,k}^{\curl} \cap H_0(\curl, \Omega)$, where 
$V_{h,k}^{\curl} $ is the edge element space defined in Section~\ref{sec:nedelecbasis}. The corresponding finite element method is: find 
$\boldsymbol E_h \in \mathring{V}_{h,k}^{\curl}$ s.t.
\begin{align}
\label{exm1:eq1}
\int_{\Omega}\mu^{-1}(\nabla \times \boldsymbol E_h)
\cdot (\nabla \times \boldsymbol v_h) \ \mathrm d\boldsymbol x -
\int_{\Omega}\omega^2 \tilde
\epsilon \boldsymbol E_h \cdot \boldsymbol v_h\ \mathrm d\boldsymbol x
= \int_{\Omega}\boldsymbol J \cdot \boldsymbol v_h \ \mathrm d\boldsymbol x, \quad \forall~
\boldsymbol v_h \in  \mathring{V}_{h,k}^{\curl}.
\end{align} 

\begin{figure}[htp] 
\centering
\includegraphics[width=0.6\textwidth]{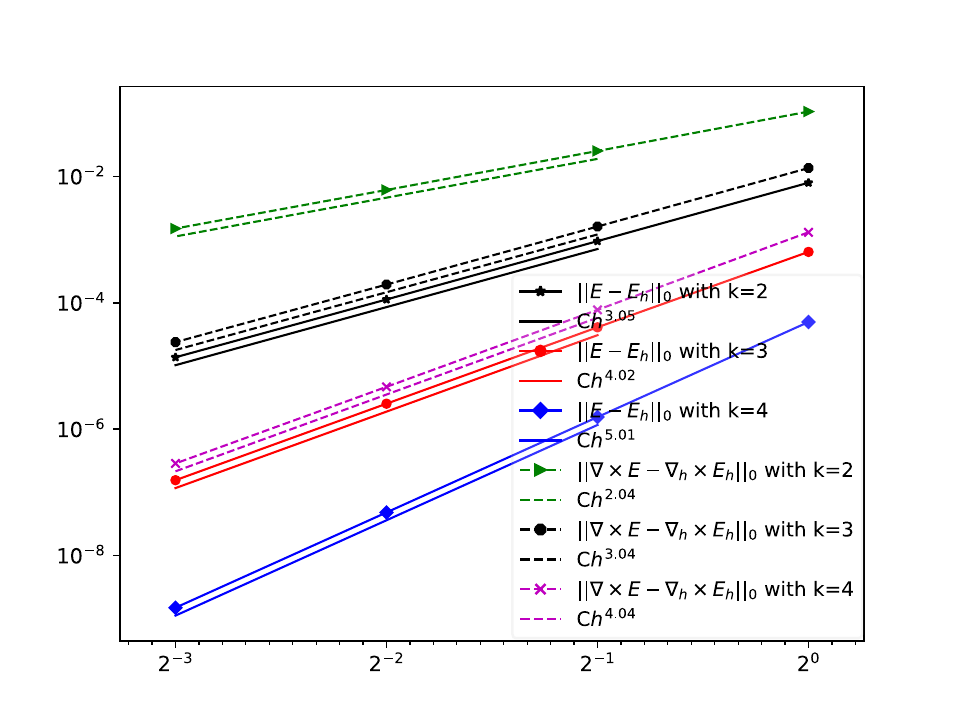}
\caption{Errors $\|\bs E - \bs E_h\|_{0}$ and 
    $\|\nabla \times(\bs E - \bs E_h)\|_{0}$ of finite element
    method \eqref{exm1:eq1} on uniformly refined mesh with $k = 2, 3, 4$.}
\label{exm1fig1}
\end{figure}

To test the convergence rate of second-kind N\'ed\'elec space, we choose 
\begin{align*}
&\omega  = \tilde \epsilon = \mu = 1, & \quad & \boldsymbol E = (f, \sin(x)f, \sin(y)f),&\\
& f = (x^2-x)(y^2-y)(z^2-z),& \quad & \boldsymbol J  = \nabla\times\nabla\times \boldsymbol E - \boldsymbol E. &
\end{align*}

The numerical results are shown in Fig.~\ref{exm1fig1} and it is clear that 
$$
\|\bs E-\bs E_h\|_{0} \leq C h^{k+1}, \quad \|\nabla\times(\bs E-\bs E_h)\|_{0} \leq C h^k.
$$

\bibliographystyle{abbrv}
\bibliography{paper}
\end{document}